\numberwithin{equation}{section}
\theoremstyle{definition}
\theoremstyle{plain}
\newtheorem{thm}{Theorem}[section]%[chapter]%[section]%!!the counter really doesn't follow the chapters!!
\newtheorem{Prop}[thm]{Proposition}
 \newtheorem{Thm}{Theorem}[section]
 \newtheorem{Rmk}[thm]{Remark}
 \newtheorem{Lem}[thm]{Lemma}
\def\D {\mathbb{D}}
\def\bbE {\mathbb {E}}
\def\bbP {\mathbb {P}}
\def\N {\mathbb{N}}
\def\R {\mathbb{R}}
\def\bbR {\mathbb{R}}
\def\T {\mathbb{T}}
\def\bbV {\mathbb {V}}
\def\Z {\mathbb{Z}}
\def\n{\bold{n}}
\def\bk{\bold{K}}
\def\bS{\bold{S}}
\def\cA {\mathcal{A}}
\def\cB {\mathcal{B}}
\def\cC {\mathcal{C}}
\def\cD {\mathcal{D}}
\def\cG {\mathcal{G}}
\def\cL {\mathcal{L}}
\def\cN {\mathcal{N}}
\def\cP {\mathcal{P}}
\def\cZ {\mathcal{Z}}
\def\cR {\mathcal{R}}
\def\cT {\mathcal{T}}
\def\eps {{\varepsilon}}
\def\e {{\varepsilon}}
\def\indc {{\bf 1}}
\def\la {\langle}
\def\ra {\rangle}
\def\d {{\partial}}
\newcommand{\Cov}{\operatorname{Cov}}
\newcommand{\ba}{\begin{aligned}}
\newcommand{\ea}{\end{aligned}}
\newcommand{\be}{\begin{equation}}
\newcommand{\ee}{\end{equation}}
\numberwithin{equation}{section}
\begin{document}%%%%%%%%%%%%%%%%%%%%%%%%%%%%%%%%
%%%%%%%%%%%%%%%%%%%%%%%%%%%%%%%%%%%%%%%%%%%%%%%%

% Vous pouvez mettre dans la prochain ligne la rubrique choisie
% (si vous la connaissez) - meme deux, format : Rubrique1/Rubrique2

\title[ Long-time correlations for a hard-sphere gas at equilibrium
%Asymptotics of equilibrium fluctuations for a hard-sphere gas
%The linearized Boltzmann equation as low density of a hard sphere system%Fluctuations around equilibrium in the Boltzmann-Grad limit%
] {Long-time correlations for a hard-sphere gas  \\at equilibrium
%Asymptotics of equilibrium fluctuations for a hard-sphere gas, for arbitrarily large times
%The linearized Boltzmann equation as low density limit of a hard sphere system
}
\author{Thierry Bodineau, Isabelle Gallagher, Laure Saint-Raymond, Sergio Simonella}
% utiliser les \`etiquettes pour indiquer l'adresse de chaque auteur,
%     s'il y a plusieurs adresses

% \author[label1,label2]{}
% \address[label1]{}
% \address[label2]{}

\begin{abstract} 
{\color{black}
It has been known since Lanford~\cite{La75} that the dynamics of a hard sphere gas  is described in the low density limit by the Boltzmann equation, at least for short times. The classical strategy of proof fails for longer times, even close to equilibrium.

In this paper, we introduce a duality method coupled with a pruning argument to prove that the covariance of the fluctuations around equilibrium is governed by the linearized Boltzmann equation globally in time (including in diffusive regimes). This method is much more robust and simple than the one {\color{black}devised} in \cite{BGSR2} which was specific to the 2D case.
}
%{\color{blue}It is classical that 
%for a system of hard spheres in a three-dimensional periodic box, 
%the covariance of equilibrium fluctuations should converge, in the Boltzmann-Grad limit, to the solution of the linearized Boltzmann equation in $L^2$-space. This result was proved for short times in~\cite{BLLS80}, and for arbitrary times in the two dimensional case in~\cite{BGSR2}.   We provide in this article  a simpler, unified and dimension-independent method of proof, which gives the convergence of the covariance   for  arbitrarily large times.}
 
%For a system of hard spheres in a three-dimensional periodic box, 
%we prove that the covariance of equilibrium fluctuations   converges, in the Boltzmann-Grad limit, to the solution of the linearized Boltzmann equation in $L^2$-space.  This improves the result in \cite{BGSR2}, by providing a simpler and dimension-independent method of proof.
\end{abstract}

\maketitle
\tableofcontents
\section{Introduction}
\setcounter{equation}{0}
The goal of this paper is to study the  dynamical fluctuations  of a hard sphere gas at equilibrium in the low density limit. The equilibrium is described by a Gibbs measure,  which is  a product measure up to the spatial exclusion of the particles, and stationary under the microscopic dynamics. 

A major challenge in statistical physics is  to understand 
the long time behavior of the correlations even in such an equilibrium regime.
%the time correlations for  such an  equilibrium, including their long time behavior. 
Our ultimate goal would be to prove that the fluctuations are described  in the low density limit by the fluctuating Boltzmann equation   on long  kinetic times.
The present paper provides a  first step of this program, by characterizing the evolution of the covariance of the fluctuations on such time scales. 
We are hopeful  that the method introduced in this paper could be extended to study the convergence of the higher moments and therefore to complete the program.

%a   central limit theorem, by proving a convergence result in the low density limit on long  kinetic times,  to some fluctuating Boltzmann equation. The present paper deals with the first step of this program, characterizing the evolution of the covariance on long time scales.

%A classical problem in statistical physics is the study of time correlations describing fluctuations around this behaviour.  

%Suppose that one particle is perturbed at time zero according to a profile $h_0 \in L^2(\T^d\times\R^d)$. 
Time correlations are expected to  evolve deterministically as dictated by the linearized Boltzmann equation.
At the mathematical level, such a result can be regarded as a variant of the rigorous validity of the nonlinear Boltzmann equation, which was first obtained for short times in \cite{La75} (see also~\cite{IP89,S2,CIP94,GSRT, PS17, denlinger, GG18,GG20}). In fact the same method as in~\cite{La75}, combined with a low density expansion of the invariant measure, was applied in \cite{BLLS80} to prove the validity of the linearized Boltzmann equation.  The result in \cite{BLLS80} suffered however from the same time restriction of the nonlinear case, in spite of the fact that the solution to the linearized Boltzmann equation is globally well defined.

This limitation was finally overcome in~\cite{BGSR2}, in the case of a two-dimensional gas of hard disks. The method of~\cite{BGSR2} used, in particular, that the canonical partition function is uniformly bounded in two space dimensions. For $d \geq 3$ the limit is however more singular, as the accessible volume in phase space is exponentially small.
The goal of the present  paper is to present a much more robust method, based on a duality argument,  which does not depend on dimension. 
Our analysis is quantitative and the validity holds for arbitrarily large kinetic times, even slowly diverging. Hence a hydrodynamical limit can be also obtained in the same way as  in~\cite{BGSR2}, but we shall not repeat this discussion here.

  \subsection{The hard-sphere model}

The microscopic model consists of  identical hard spheres of unit mass and of diameter~$\eps$.
 \begin{figure}[h] %  figure placement: here, top, bottom, or page
\centering
\includegraphics[width=3in]{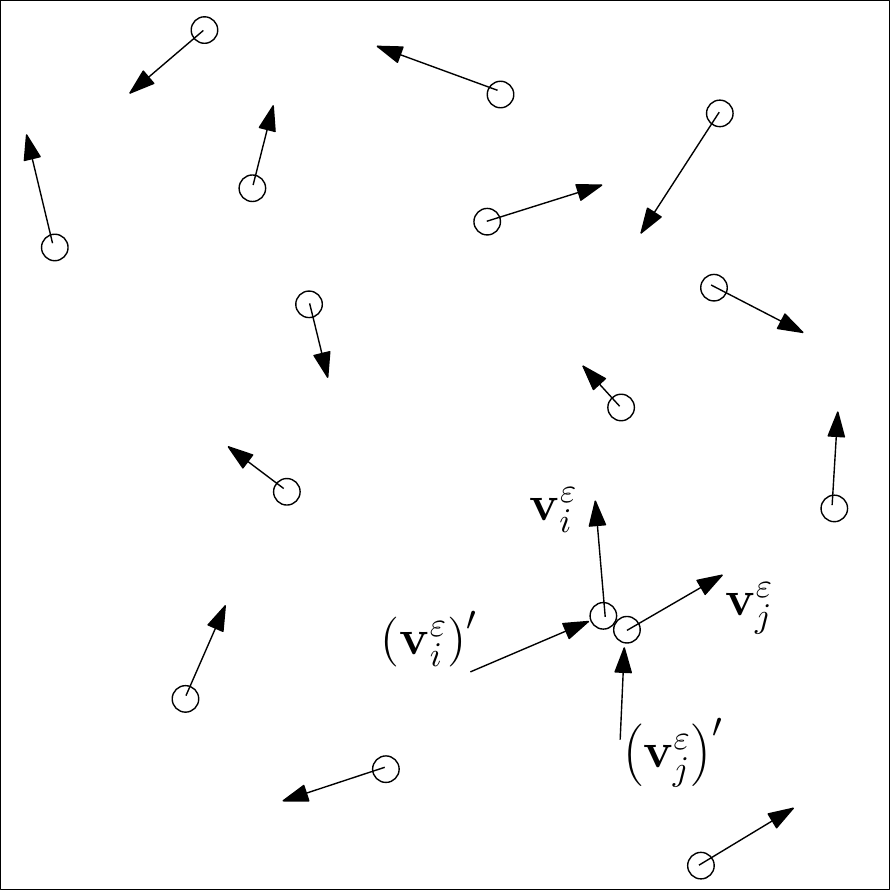} 
\caption{\small 
Transport and collisions in a hard-sphere gas. The square box represents the $d$-dimensional torus.
}
\end{figure}
The  motion of $N$ such hard spheres is governed by a system of ordinary differential equations, which are set in~$\D^N:=( \T ^d\times\R^d)^{N }$ where~$\mathbb T^d$ is  the unit~$d$-dimensional periodic box: writing~${\bf x}^{\e}_i \in  \T ^d$ for the position of the center of the particle labeled~$i$ and~${\bf v}^{\e}_i \in  \R ^d$ for its velocity, one has
\begin{equation}
\label{hardspheres}
{d{\bf x}^{\e}_i\over dt} =  {\bf v}^{\e}_i\,,\quad {d{\bf v}^{\e}_i\over dt} =0 \quad \hbox{ as long as \ } |{\bf x}^{\e}_i(t)-{\bf x}^{\e}_j(t)|>\eps  
\quad \hbox{for \ } 1 \leq i \neq j \leq N
\, ,
\end{equation}
with specular reflection at collisions: %for two indexes~$i$ and~$j$ satisfying~$1 \leq i \neq j \leq N$, we have
\begin{equation}
\label{defZ'nij}
\begin{aligned}
\left. \begin{aligned}
 \left({\bf v}^{\e}_i\right)'& := {\bf v}^{\e}_i - \frac1{\eps^2} ({\bf v}^{\e}_i-{\bf v}^{\e}_j)\cdot ({\bf x}^{\e}_i-{\bf x}^{\e}_j) \, ({\bf x}^{\e}_i-{\bf x}^{\e}_j)   \\
\left({\bf v}^{\e}_j\right)'& := {\bf v}^{\e}_j + \frac1{\eps^2} ({\bf v}^{\e}_i-{\bf v}^{\e}_j)\cdot ({\bf x}^{\e}_i-{\bf x}^{\e}_j) \, ({\bf x}^{\e}_i-{\bf x}^{\e}_j)  
\end{aligned}\right\} 
\quad  \hbox{ if } |{\bf x}^{\e}_i(t)-{\bf x}^{\e}_j(t)|=\eps\,.
\end{aligned}
\end{equation}
The sign of the scalar product $({\bf v}^{\e}_i-{\bf v}^{\e}_j)\cdot ({\bf x}^{\e}_i-{\bf x}^{\e}_j)$ identifies post-collisional (+) and pre-collisional ($-$) configurations.
This flow does not cover all possible situations, as multiple collisions are excluded.
But one can show (see \cite{Ale75}) that for almost every admissible initial configuration~$({\bf x}^{\e 0}_i, {\bf v}^{\e 0}_i)_{1\leq i \leq N}$, there are neither multiple collisions,
nor accumulations of collision times, so that the dynamics is globally well defined.

\medskip
We are not interested here in one specific realization of the dynamics,  but rather in a statistical description. This is achieved by introducing a measure at time 0, on the phase space we now specify.
The collections of~$N$ positions and velocities are denoted respectively by~$X_N := (x_1,\dots,x_N) $ in~$ \T^{dN}$ and~$V_N := (v_1,\dots,v_N) $ in~$ \R^{d N}$,   we set~$Z_N:= (X_N,V_N) $ in~$ ( \T ^d\times\R^d)^{N }$, with~$Z_N =(z_1,\dots,z_N)$. Thus a set of~$N$ particles is characterized by~${\mathbf Z}^\eps_N =  ({\mathbf z}^\eps_1,\dots , {\mathbf z}^\eps_N)$ which evolves in the phase space $$\label{D-def}
{\mathcal D}^{\eps}_{N} := \big\{Z_N \in \D^N \, / \,  \forall i \neq j \, ,\quad |x_i - x_j| > \eps \big\} \, .
$$
%In the {\it canonical setting}, the number $N$ of particles is fixed and the initial probability density is given by
%$$ \frac{1}{\cZ^\eps_N}  \indc_{
%{\mathcal D}^{\eps}_{N}}  (Z_N) \prod_{i=1}^n f^0 (z_i)$$
%where $\cD_N^\eps$ is the subset of the phase space where the exclusion holds, and $\cZ^\eps_N$ is the partition function (i.e. the normalizing constant).  We have written~$Z_N:=(X_N,V_N)$.
%Note that a rough estimate gives
%$$  \log \cZ^\eps_N \sim \sum_{j = 1}^N \log (1- (j-1) \eps^d) = O(N^2\eps^d ) = O(\eps^{2-d})$$
%so  in dimension $d>2$, the admissible configurations constitute a very small part of the phase space.
%
To avoid spurious correlations due to a given total number of particles, we shall consider a grand canonical state. % The random number of particles $\cN$ and average number~$\mu_\eps$.
At equilibrium the probability density of finding $N$ particles in $Z_N$ is given by
%is defined on~$(N,Z_N)$ as 
\begin{equation}
\label{eq: initial measure}
\frac{1}{N!}M^{\eps }_{N}(Z_N) 
:= \frac{1}{\cZ^ \eps} \,\frac{\mu_\eps^N}{N!} \, \indc_{
{\mathcal D}^{\eps}_{N}}(Z_N) 
 \, M^{\otimes N}  (V_N)\, , \qquad \hbox{ for } N= 0, 1,2,\dots \end{equation} 
 with~
 \begin{equation}
\label{eq: max}
  M(v) := \frac1{(2\pi)^\frac d2} \exp\big( {-\frac{ |v|^2}2}\big)\,,
\qquad M^{\otimes N}(V_N):= \prod_{i=1}^N M(v_i)\, , 
\end{equation}
and  the partition function is given by
\begin{equation}
\label{eq: partition function}
\cZ^\eps :=  1 + \sum_{N\geq 1}\frac{\mu_\eps^N}{N!}  
\int_{\T^{dN}\times \R^{dN}}  \left( \prod_{i\neq j}\indc_{ |x_i - x_j| > \eps} \right)\left( \prod_{i=1}^N M(v_i)\right)dX_N\,dV_N\,.
\end{equation}
In the following the probability of an event~$X$ with respect to the equilibrium measure~(\ref{eq: initial measure}) will be denoted~${\mathbb P}_\eps(X)$, and~${\mathbb E}_\eps$ will be the expected value. 

%The Boltzmann-Grad scaling~$\mu_\eps \eps^{d-1} = 1$ ensures that
%the average number of particles satisfies
%$$
%\lim_{\eps \to 0} {\mathbb E}_\eps  \left(  \cN  \right)  \eps^{  d-1 } =  1\,.
%$$
%Our goal is to describe  the behaviour of the  empirical density 
%\begin{equation}
%\label{eq: empirical}
%\pi^{\eps}_t:=\frac{1}{\mu_\e}\sum_{i=1}^\cN \delta_{{\bf z}^\e_i(t)} 
%\end{equation}
%under this equilibrium measure as~$\mu_\eps$ goes to infinity.

In the low density regime, referred to as  the Boltzmann-Grad scaling, the density (average number) of particles is tuned by the parameter $\mu_\e := \e^{-(d-1)}$, ensuring that the mean free path between collisions is of order one \cite{Gr49}. Then, if the particles are distributed according to the grand canonical Gibbs measure~(\ref{eq: initial measure}),
the limit $\e\to0$ provides an ideal gas with velocity distribution $M$.
%The rescaled {\it $n$-particle correlation function} is defined by
%\begin{equation}\label{defFeps0n}
%\begin{aligned}
%F_n^{\eps 0} (Z_n)
%:= \mu_\eps^{-n} \,
%\sum_{p=0}^{\infty} \,\frac{1}{p!}\, \int dz_{n+1}\dots dz_{n+p} \,
%W_{n+p}^{\eps 0} (Z_{n+p})\;.% \quad\mbox{on\ } {\mathcal D}^{\eps}_{n}\,.
%\end{aligned}
%\end{equation}
%For any symmetric test function $h_n : \D^n\rightarrow \bbR$, one can check that
%\begin{equation}
%\label{eq: marginal time 0}
%\begin{aligned}
%\bbE_\eps \Big( \sum_{\substack{i_1, \dots, i_n \\ i_j \neq i_k, j \neq k}} h_n \big( {\bf z}_{i_1}^{\e 0}, \dots ,  {\bf z}_{i_n}^{\e 0} \big) \Big)
%=  \mu_\eps^{n} \int_{\D^n} dZ_n \, F_n^{\eps 0} (Z_n )\, h_n( Z_n)   \,.
%\end{aligned}
%\end{equation}
%Moreover one can prove that, in the Boltzmann-Grad limit,
%$$
%\forall n \geq 1 \, , \quad  F_n^{\eps 0} (Z_n) \longrightarrow \prod_{i=1}^n f^0 (z_i) \hbox{ as } \eps \to 0$$
%on the set $\{ x_i \neq x_j\;,\ \forall i \neq j\}$.
%That is, at leading order, the initial distribution is  chaotic.

 \subsection{The linearized Boltzmann equation}
 Out of equilibrium, if the particles are initially identically distributed according to a smooth, sufficiently decaying function~$f^0$,  then in the Boltzmann-Grad limit $\mu_\eps \to \infty$, the average behavior is governed for short times by the Boltzmann equation~\cite{La75}  
$$
\left\{ \begin{aligned}
& \d_t f +v \cdot \nabla _x f = \! \displaystyle\int_{\R^d}\int_{{\mathbb S}^{d-1}}   \! \Big(  f(t,x,w') f(t,x,v') - f(t,x,w) f(t,x,v)\Big) 
\big ((v-w)\cdot \omega\big)_+ \, d\omega \,dw \,  ,\\
&  f(0,x,v) = f^0(x,v)
 \end{aligned}
  \right.
  $$
  where the precollisional velocities $(v',w')$ are defined by the scattering law
\begin{equation}\label{scattlaw}
 v' := v- \big( (v-w) \cdot \omega\big)\,  \omega \,  ,\qquad
 w' :=w+\big((v-w) \cdot \omega\big)\,  \omega  \, .
\end{equation}

At   equilibrium,   $M$ is a stationary solution to the Boltzmann equation, and 
in particular %Lanford's theorem~\cite{La75}  implies that for short times,
the  empirical density defined by
\begin{equation}
\label{eq: empirical}
\pi^{\eps}_t:=\frac{1}{\mu_\e}\sum_{i=1}^\cN \delta_{{\bf z}^\e_i(t)} 
\end{equation} concentrates on~$M$:
for any test function $h: \D\to\R$ and any $\delta>0$, $t \in \R$,
\begin{equation}\label{LLN}
\bbP_\eps \left( \Big|\pi^\eps_t(h) -  
  {\mathbb E}_\eps\big(\pi^\eps_t(h) \big)\Big| > \delta \right) \xrightarrow[\mu_\eps \to \infty]{} 0\;.
\end{equation}
 It is well-known that  the Boltzmann equation dissipates entropy, contrary to  the original particle system~(\ref{hardspheres})-(\ref{defZ'nij}) which is time reversible.  Thus some information is lost  in the Boltzmann-Grad limit, and describing the fluctuations   is a first way to 
capture part of this lost  information.
As in the standard central limit theorem, we expect these fluctuations to be of order~$1/\sqrt{\mu_\eps}$. %, which is the typical size %(in some suitable norms to be understood)  of the remaining correlations.
We therefore define  the fluctuation field $\zeta^\eps$ by 
\begin{equation}
\label{eq: fluctuation field}
\zeta^\eps_t \big(  h  \big) :=  { \sqrt{\mu_\eps }} \, 
\Big( \pi^\eps_t(h) -  {\mathbb E}_\eps\big(\pi^\eps_t(h) \big)     \Big) 
 \end{equation}
 for any test function $h$. 
 This process $\zeta^\eps$ has been studied for short times in \cite{BGSS1, BGSS2} and was proved to solve a fluctuating equation.
  Here we focus on the time correlation
 \begin{equation}
 \label{cov-def}
  \Cov_\eps (t,g_0,h) := \bbE_\eps \Big( \zeta^\eps_0 (g_0) \zeta_t^\eps (h) \Big)\,.
  \end{equation}
  Before stating our main result, let us define the linearized Boltzmann operator
  $$
   \cL g := - v \cdot \nabla_x g +\int_{\R^d \times {\mathbb S}^{d-1}} \!\!\! \!\!\!  M(w) \left( (v - w) \cdot \omega \right)_+ \left[g(v') +  g(w') -  g(v) -  g(w)\right]d \omega \, d w
  $$
  which is well-defined in the space~$L^2_M$, denoting for~$1 \leq p < \infty$
$$
L^p_M:=  \Big \{
g : \T^d\times \R^d \rightarrow \R \, , \, \|g\|_{L^p_M}:= \Big( \int_{ \T^d\times \R^d} |g|^p \, Mdxdv\Big)^\frac1p < \infty
\Big\}  \, .
$$
   \begin{Thm} [{\bf Linearized Boltzmann equation}]
\label{thmTCL}
Consider a system of hard spheres at equilibrium  in a~$d$-dimensional periodic box with~$d\geq 3$.  Let~$g_0$ and~$h$ be two functions in~$ L^2_M$.
Then, in the Boltzmann-Grad limit~$\mu_\eps \to \infty$, the covariance of the fluctuation field~$\left(\zeta^\eps_t\right)_{t \geq 0}$ defined by~{\rm(\ref{cov-def})} converges on~$\R^+$ to $\displaystyle \int M g (t) h dxdv$ where $g$ is the solution of the linearized Boltzmann equation~$\partial_t g  =\cL g $, with~$  g_{| t = 0} = g_0$.
%, in the sense that for any test function~$\varphi$,  $\zeta^\eps_t(\varphi)$ converges in law to~$\zeta_t(\varphi)$.
\end{Thm}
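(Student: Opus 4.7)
The plan is to use a duality argument that transfers the evolution from the forward fluctuation field $\zeta_t^\eps(h)$ onto the test function $h$. By stationarity of the grand canonical measure, one writes
$$
\Cov_\eps(t,g_0,h) = \frac{1}{\mu_\eps}\,\bbE_\eps\!\left[\Big(\sum_i g_0({\bf z}^\eps_i) - \bbE_\eps[\cdots]\Big)\Big(\sum_j h({\bf z}^\eps_j(t)) - \bbE_\eps[\cdots]\Big)\right]\!,
$$
where $({\bf z}^\eps_j(t))_j$ denotes the configuration evolved from the initial $({\bf z}^\eps_j)_j$ over time $t$. One then views the $N$-body observable ${\bf Z}\mapsto\sum_j h({\bf z}_j(t))$ as the action of a backward hard-sphere propagator $T_t^\eps$ on the one-body function $h$, and reduces the problem to showing that $T_t^\eps h$, after a one-body averaging against $g_0$ via the equilibrium measure, converges to $e^{t\cL^*}h$ paired with $g_0$ in $L^2_M$. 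The theorem then follows from the standard duality $\int M\,(e^{t\cL}g_0)\,h\,dx\,dv = \int M\,g_0\,(e^{t\cL^*}h)\,dx\,dv$.

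\textbf{BBGKY expansion.} The next step is to expand $T_t^\eps h$ via the Duhamel iterations of the BBGKY hierarchy, or equivalently a cluster expansion of the $N$-body backward evolution. This yields a series over \emph{collision trees} of arbitrary depth $n$, each term being an integral over a backward pseudo-trajectory rooted at the observable particle, with added collision partners sampled against the Gibbs measure. The formal $\mu_\eps\to\infty$ limit of this series is precisely the Neumann expansion of $e^{t\cL^*}h$ in terms of Boltzmann pseudo-trajectories, and on short kinetic windows $[k\tau,(k+1)\tau]$ with $\tau$ of order one, a Lanford-type estimate gives termwise agreement up to a truncation order $N_\eps$ that may grow slowly with $\eps$.

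\textbf{Pruning and iteration.} To pass from short to arbitrarily long — even slowly diverging — times, I would iterate the duality identity window by window and \emph{prune} each truncated tree: terms involving recollisions, overlaps, or an abnormally large number of branches are discarded, and are controlled by explicit measure estimates on the set of pseudo-trajectories producing them. The crucial point is that the pairing against $g_0$ lives in $L^2_M$, so only $L^2_M$ bounds on the pruned tree operator have to remain under control through the iteration — in sharp contrast to Lanford's approach, which propagates pointwise estimates and therefore blows up past the Lanford time. The surviving physical trees match their Boltzmann analogues, and by continuity of the linearized Boltzmann semigroup in $L^2_M$ their sum converges to $e^{t\cL^*}h$ in the limit.

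\textbf{Main obstacle.} The hardest step is the pruning: one must show that recollision and overlap contributions are small enough in $\eps$ to absorb the combinatorial growth of the trees, uniformly over many iterations and \emph{independently of dimension}. In $d=2$, \cite{BGSR2} relied on a uniform bound on the partition function $\cZ^\eps$ to do so; in $d\ge 3$ this fails, as the accessible phase-space volume is exponentially small. The $L^2_M$ duality framework sidesteps the obstruction, because $\cZ^\eps$ now enters only through $M$-weighted averages of one-particle observables; nevertheless, one still has to establish geometric recollision estimates with enough quantitative strength to tolerate both the long time scale and the high-dimensional geometry, and this is, I expect, the technical heart of the argument.
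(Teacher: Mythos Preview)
Your outline—BBGKY expansion, time-sampled pruning, and $L^2$ duality in place of Lanford's $L^\infty$ propagation—is the paper's strategy at the coarsest level. But the mechanism you describe for the $L^2$ step (``iterate the duality identity window by window'', keeping ``$L^2_M$ bounds on the pruned tree operator under control through the iteration'') is not how the argument closes, and as written it leaves a real gap: there is no $L^2\to L^2$ operator bound to iterate, since after one window the object in play is an $N_k$-particle correlation function $\widetilde G^\eps_{N_k}$, not a one-body density, and no uniform control on it is available.

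The paper does something different. The principal part (sub-exponential trees, no recollisions over the whole interval $[0,\theta]$) is handled by ordinary $L^\infty$ Lanford estimates; pruning keeps the total branch count small enough for these to sum. The $L^2$ structure enters only for each \emph{remainder}, and only \emph{once}. When a bad event triggers at some stopping time $t_{\rm stop}$, the Duhamel iteration halts there, and a change of variables from the tree parameters $(z_1,(t_i,\omega_i,v_{1+i})_i)$ to the configuration $Z_{N_k}$ at $t_{\rm stop}$ rewrites the remainder as
\[
\bbE_\eps\!\Big(\mu_\eps^{1/2}\,\hat\Phi_{N_k}\big({\mathbf Z}^\eps_{N_k}(t_{\rm stop})\big)\,\zeta^\eps_0(g_0)\,\indc_{\Upsilon^\eps_\cN}\Big) + (\text{small}),
\]
where $\Phi_{N_k}$ is an explicit symmetric $N_k$-body test function built from the pseudo-dynamics on $[t_{\rm stop},\theta]$. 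A single Cauchy--Schwarz then separates the equilibrium bound $\bbE_\eps\big(\zeta^\eps_0(g_0)^2\big)^{1/2}\le C\|g_0\|_{L^2_M}$ from the dynamical factor $\bbE_\eps\big(\mu_\eps\hat\Phi_{N_k}^2\big)^{1/2}$. The latter is a \emph{variance} under the Gibbs measure, controlled by the quasi-orthogonality estimates of Proposition~\ref{prop: Quasi-orthogonality estimates} via cluster expansion and the Penrose tree inequality; this, and not any cancellation of $\cZ^\eps$ in one-body averages, is what makes the method dimension-independent.

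Two further ingredients are missing from your outline and are not optional. First, the change of variables above is injective only without recollisions; on the final $\delta$-window before stopping there may be some, so the paper conditions on initial data with no microscopic cluster of size exceeding a fixed $\gamma$ (the set $\Upsilon^\eps_\cN$), which caps the number of recollisions per particle by a constant $\cK_\gamma$ and restores injectivity up to finite multiplicity. Second, the actual smallness of each remainder comes from the clustering estimates of Lemmas~\ref{lem: hyp quasi orth 1}--\ref{lem: hyp quasi orth 2bis}, which convert ``$n_k>2^k$ collisions in time $\tau$'' or ``a recollision in time $\delta$'' into explicit powers of $\tau$ or $\eps$. You correctly flag recollision geometry as the technical bottleneck, but the conceptual step your proposal is missing is the single Cauchy--Schwarz at stopping time together with the Gibbs-variance bound on $\hat\Phi_{N_k}$.
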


 \begin{Rmk}
 
It is classical that there is a unique solution to the linearized Boltzmann equation, which is bounded globally in time  in~$L^2_M$.
 
 The same result as Theorem~{\rm\ref{thmTCL}} was proved in dimension~$2$ in ~\cite{BGSR2} with a different, more technical and less robust strategy. The proof presented here could be adapted to the two-dimensional case, at the price of slightly more intricate geometric estimates (see Appendix~{\rm\ref{geometric estimates}}), but we choose not to deal with this case here. 

The limit is stated for any fixed time~$t$, however as will be clear from the proof, one can choose~$t$ diverging slowly with~$\eps$, as~$ o\big((\log| \log \eps|)^{1/4} \big) $ and thus the hydrodynamical limit holds true, see~\cite{BGSR2}.
\end{Rmk}

\begin{Rmk}
Previous work on the (more general) nonequilibrium setting has led to construct the Gaussian limiting fluctuation field for short times by using cumulant expansions \cite{S81,PS17,BGSS1,BGSS2}. For further discussions on the fluctuation theory of the hard sphere gas we refer to these references, as well as to \cite{EC81,S83,S2}.
\end{Rmk}

\medskip
Our strategy starts, as in the classical approach of \cite{La75}, from an expansion over collisions of the BBGKY hierarchy, moving backwards in time from time~$t$ to time~0. These collisions are represented by binary tree graphs. Moreover following \cite{BGSR1,BGSR2}, we sample collisions over small time intervals, and introduce stopping rules in order to avoid super-exponential collision trees. Here we introduce a second stopping rule, to avoid also trajectories with recollisions (a practice known to be efficient in the quantum setting \cite{Er12}). The principal part of the expansion is shown to converge to the expected limit by classical arguments, while the remainder is conveniently controlled by duality in $L^2$-norm, using a global a priori bound on the fluctuations of the invariant measure. In order to implement this strategy, we actually need to control the number of recollisions on the  last small time step before stopping time. This can be done by restricting the initial data to configurations that do not lead to clusters of particles of cardinality $\gamma$, mutually close on a microscopic scale: for $\gamma$ finite but large enough, the cost of this restriction vanishes in the Boltzmann-Grad limit.

The paper is organized as follows.  In Section \ref{sec:strategy} we setup our strategy, introduce several error terms and list the corresponding estimates (see in particular Section  \ref{sec:pseudo} for a simplified description of the method). 
Section \ref{section - L2} contains a general dual bound in $L^2$-norm (based on cluster expansion), which is then used in Sections \ref{section - clustering estimates}, \ref{section - recollisions}, \ref{section - principal} to control the principal part and the error terms. The required geometric estimates on recollision sets are  discussed in the appendix, restricting this part for brevity to $d \geq 3$.

\section{Strategy of the proof} 	\label{sec:strategy}

\setcounter{equation}{0}

\subsection{Reduction to smooth mean free  data}

Let us first prove that, without loss of generality, we can restrict our attention to functions $g_0,h$ satisfying 
\begin{equation} 
\label{eq:mn}
\displaystyle \int M  g_0 dz = \int M  h dz = 0\, .
% =   {\mathbb E}_\eps\big(\pi^\eps_t(g_0)\big)\;.
\end{equation}
We start by noticing that  there is a constant~$ c_\eps$ such that for all  $h\in L^2_M $,
\begin{equation}
\label{eq: mean function}
{\mathbb E}_\eps\big(\pi^\eps_t(h) \big)  = c_\eps \int_{\D}  M (v)h(z) dz\, .
\end{equation}
Indeed
$$
\begin{aligned}
{\mathbb E}_\eps\big(\pi^\eps_t(h) \big) &=    \frac{1}{\cZ^\eps} \sum_{n\geq 1} \frac{\mu_\eps^{n-1}}{(n-1)!} \int _{\cD_n^\eps}M^{\otimes n}(V_n)   h  (z_1) \, dZ_n \\
&=  \int dz_1 M(v_1) h(z_1) \Big(\frac{1}{\cZ^\eps} \sum_{p\geq 0} \frac{\mu_\eps^{p}}{p!}  \int_{\cD_\eps^p}d\bar Z_p
  \prod_{1 \leq i \leq p} \indc_{|x_1-\bar x_i|>\eps} M^{\otimes p}(\bar V_p)  \Big)\\
&= c_\eps \int_{\D}  M (v)h(z) dz \end{aligned}
$$
using the translation  invariance. Expanding the exclusion condition~$ \prod_{1 \leq i \leq p} \indc_{|x_1-\bar x_i|>\eps}-1$ actually leads to~$ c_\eps = 1+O(\eps)$ but this fact will not be used in the following. 

Denoting by $\la \cdot  \ra $ the average with respect to the probability measure $Mdvdx$ and by~$\widehat g := g - \la g \ra $, we get according to \eqref{eq: mean function},
$${\mathbb E}_\eps\big(\pi^\eps_t(\widehat g_0 )\big)
= {\mathbb E}_\eps\big(\pi^\eps_t(\widehat h )\big) = 0\,.$$
Now, shifting $g_0$ and $h$ by their averages boils down to recording the fluctuation of the total number of particles (in the grand canonical ensemble) 
$$\Cov_\e (t, g_0 , h ) =  \Cov_\e (t, \widehat g_0  , \widehat h )  
+\la g_0 \ra   \bbE_\eps \Big( \zeta^\eps (1) \zeta_t^\eps (\widehat h ) \Big) + \la h \ra  \bbE_\eps \Big( \zeta^\eps (1) \zeta_0^\eps (\widehat g_0) \Big)
+ \la h \ra \la g_0 \ra \bbE_\eps \Big( \zeta^\eps (1)^2 \Big)\, ,\\
$$
%\begin{equation*}
%\begin{aligned}
%\Cov_\e (t, g_0 , h ) =& \Cov_\e (t, \widehat g_0  , \widehat h )  
%+\la g_0 \ra   \bbE_\eps \Big( \zeta^\eps (1) \zeta_t^\eps (\widehat h ) \Big)\\
%&
%+ \la h \ra  \bbE_\eps \Big( \zeta^\eps (1) \zeta_0^\eps (\widehat g_0) \Big)
%+ \la h \ra \la g_0 \ra \bbE_\eps \Big( \zeta^\eps (1)^2 \Big)\, ,\\
%\end{aligned}
%\end{equation*}
where we used the time independent field $\zeta^\eps (1) = \frac{1}{\sqrt{\mu_\eps}} \big( \cN - \bbE_\eps (\cN) \big)$. Using the time invariance of the Gibbs measure, the time evolution of  $\Cov_\e$ is unchanged and the result follows from the fact that for all   functions~$h_1$  and~$h_2$ in~$L^2_M$
$$
\int M (\cL \widehat h_1)\widehat  h_2 \, dxdv = \int M( \cL   h_1  ) h_2 \, dxdv\, .
$$
 It will be also useful in the following to work with    functions~$g_0$  and~$h$ with additional smoothness (namely assuming~$g_0$ Lipschitz in space, and both functions to be in~$L^\infty$ and not only~$L^2_M$). For this we notice that we can introduce sequences of smooth, mean free functions~$\left(g^\alpha_0\right)_{\alpha>0}$  and~$\left(h^\alpha\right)_{\alpha>0}$ approximating~$g_0$  and~$h$ in~$L^2_M$ as~$\alpha \to 0$.
By the Cauchy-Schwarz inequality there holds for all mean free functions~$h_1$  and~$h_2$ in~$L^2_M$
$$
 \begin{aligned}
 \Cov_\eps (t,h_1,h_2) & = \bbE_\eps\big (\zeta^\eps_0 (h_1)\zeta^\eps_t (h_2)\big)\\
 & \leq \bbE_\eps \big (\zeta^\eps_0 (h_1)^2 \big)^\frac12\, \bbE_\eps \big (\zeta^\eps_t (h_2)^2 \big)^\frac12\, ,
 \end{aligned}
$$
which is bounded uniformly (for small $\eps$) by virtue of the a priori estimate (see \cite{S2} or Remark \ref{rem:CF} below)
\begin{equation}
\label{eq: moment ordre 2}
 \forall h \in L^2_M\, , \quad \bbE_\eps \big (\zeta^\eps_t (h)^2 \big)^\frac12 \leq C \|h\|_{L^2_M}\,,\quad C >0
\, .
\end{equation}
  In particular $$
\Big | \Cov_\eps (t,g_0,h)- \Cov_\eps (t,g^\alpha_0,h^\alpha) \Big | \longrightarrow 0 \, , \quad \alpha \to 0 \, ,
$$
uniformly in~$\eps$.
In the following, we therefore  assume that~$g_0$ and~$h$ are mean free and smooth.

\subsection{The Duhamel iteration} 
For any  test  function $h : \D\rightarrow \bbR$, let us compute
 $$
  \bbE_\eps\big ( \zeta^\eps_0 (g_0) \zeta_t^\eps (h)\big)=   
 \frac1{\mu_\eps }\bbE_\eps\left ( \Big(  \sum_{i = 1}^\cN   g_0 \big({\bf z}^\eps_i(0)\big)\Big  )
 \Big(  \sum_{i = 1}^\cN   h\big ({\bf z}^\eps_i(t)\big) \Big) \right ) \, .
 $$
Thanks to the exchangeability of the particles, this can be written
\begin{equation}
\label{eq:covCF}
 \bbE_\eps\big ( \zeta^\eps_0 (g_0) \zeta_t^\eps (h)\big) =  \int G^{\eps}_1(t,z)\, h(z)\, dz
\end{equation}
 where  $G^\eps_1$ is     the one-particle ``correlation function" 
$$G^{\eps}_1 (t,z_1)
:= \frac1{\mu_\eps } \,\sum_{p=0}^{\infty} \,\frac{1}{p!} \,\int_{ \D^p} dz_{2}\dots dz_{1+p} \,
%\indc_{{\mathcal D}^{\eps}_{n+p}}(Z_{n+p})\,
W_{1+p}^\eps (t, Z_{1+p}) \, ,
$$
and~$W^\eps_N(t)$ is defined as follows. At time zero we set
\begin{equation}\label{biased initial data}
\frac1{N!}W^{\eps0}_{N }  (Z_N):= \frac{1}{\cZ^ \eps} \,\frac{\mu_\eps^N}{N!} \, \indc_{{\mathcal D}^{\eps}_{N}}(Z_N)
 \,M^{\otimes N}(V_N)  \sum_{i = 1} ^N g_0 (z_i) \, ,
\end{equation}
and~$W^\eps_N(t)$ 
solves   the Liouville equation
\begin{equation}
\label{Liouville}
	\d_t W^{\eps}_N +V_N \cdot \nabla_{X_N} W^{\eps}_N =0  \,\,\,\,\,\,\,\,\, \hbox{on } \,\,\,{\mathcal D}^{\eps}_{N}\, ,
	\end{equation}
	with specular reflection (\ref{defZ'nij}) on the boundary $|x_i - x_j|= \eps$.
		We actually extend $W^\eps_N$ by zero outside $\cD_N^\eps$.

As a consequence, to prove Theorem~\ref{thmTCL} we need to prove that~$G^{\eps}_1(t)$ converges for all times to~$Mg(t)$, where~$g$ solves the linearized Boltzmann equation.

Similarly for any test function $h_n : \D^n\rightarrow \bbR$, one defines
the $n$-particle ``correlation function" 
\begin{align}
\label{eq: densities at t}
G^{\eps}_n (t,Z_n)
:= \frac1{\mu_\eps^n } \,\sum_{p=0}^{\infty} \,\frac{1}{p!} \,\int_{ \D^p} dz_{n+1}\dots dz_{n+p} \,
%\indc_{{\mathcal D}^{\eps}_{n+p}}(Z_{n+p})\,
W_{n+p}^\eps (t, Z_{n+p})
\end{align}
so that
$$
\bbE_\eps \left( \frac{1}{\mu_\eps^n} \Big(  \sum_{i = 1}^\cN  g_0 \big({\bf z}^\eps_i(0)\big)  \Big)
\Big( \sum_{(i_1, \dots i_n )  }  
h_n \big( {\bf z}^{\eps}_{i_1} (t),\dots,{\bf z}^{\eps}_{i_n} (t) \big) 
\Big) \right)
=\int G^{\eps}_n(t,Z_n)\, h_n(Z_n)\, dZ_n\, .
$$
Here and below we use the shortened notation
$$
\sum_{\left(i_1,\dots,i_n\right)} = \sum_{\substack{i_1,\dots,i_n \in \{1,\dots,\cN\} \\ i_j \neq i_k,\, j \neq k}}
\;.$$
 By integration of the Liouville equation for fixed $\eps$, we obtain that 
the one-particle correlation function $G^\eps_1(t,x_1,v_1)$  satisfies % of Boltzmann type
\begin{equation}
\label{first-correlation}
 \partial_t G^\eps_1 + v_1 \cdot \nabla_{x_1} G^\eps_1 = C_{1,2}^{\eps} G^\eps_{2}  
 \end{equation}
where the collision operator comes from the boundary terms in Green's formula (using the reflection condition to rewrite the gain part in terms of pre-collisional velocities):
\begin{equation} \label{choice hemisphere}
 \begin{aligned}
(C_{1,2} ^\eps G^\eps_2 )(x_1,v_1)
&:=  \int G^\eps_{2} (x_1,v_1', x_1+\e \omega,v_2') \big( (v_2- v_1)\cdot \omega \big)_+ \, d \omega dv_2\\
&\quad -   \int  G^\eps_{2} (x_1,v_1, x_1+\e \omega,v_2) \big( (
v_2 - v_1
)\cdot \omega \big)_- \, d \omega dv_2\, ,
 \end{aligned}
\end{equation}
with as in~(\ref{scattlaw})
$$v_1' = v_1 - (v_1-v_2) \cdot \omega\,  \omega\, , \quad v_2' = v_2 +(v_1-v_2) \cdot \omega\,  \omega \,.$$

Similarly, we have the following evolution equation for the $n$-particle correlation function~:
\begin{equation}
\label{BBGKYGC}
 \partial_t G^\eps_n + V_n \cdot \nabla_{X_n} G^\eps_n = C_{n,n+1}^\eps G^\eps_{n+1} \quad \mbox{on} \quad {\mathcal D}^\eps_{n  }\;,
\end{equation}
 with specular boundary reflection as in \eqref{Liouville}. This is the well-known  BBGKY hierarchy (see~\cite{Ce72}), which is the  elementary brick in the proof of   Lanford's theorem for short times. As~$C^\eps_{1, 2} $ above, $C^\eps_{n, n+1}$ describes
 %  all possible 
 collisions between one ``fresh'' particle (labelled $n+1$) 
 and one given particle~$i\in \{1,\dots, n\}$. As in~(\ref{choice hemisphere}), this term is decomposed into two parts according  to the hemisphere~$\pm(
v_{n+1} - v_i
)\cdot \omega >0$:
$$ C_{n,n+1} ^\eps G^\eps_{n+1}   := \sum_{i=1}^n C_{n,n+1} ^{\eps,i} G^\eps_{n+1}  $$
with
 $$
 \begin{aligned}
 (C_{n,n+1} ^{\eps,i} G^\eps_{n+1} )(Z_n)&:=  \int G^\eps_{n+1} (Z_n^{\langle i \rangle}, x_i,v_i',x_i+\e \omega ,v_{n+1} ') \big( (v_{n+1} - v_i)\cdot \omega  \big)_+ \, d \omega\, dv_{n+1} \\
&\quad -   \int  G^\eps_{{n+1} } (Z_n, x_i+\e \omega,v_{n+1} ) \big( (
v_{n+1}  - v_i
)\cdot \omega  \big)_- \, d \omega \,dv_{n+1} \, ,
 \end{aligned}
 $$
 where~$(v'_i,v_{n+1} ')$ is recovered from~$(v_i,v_{n+1} )$ through the scattering laws~(\ref{scattlaw}), and with the notation
$$Z_n^{\langle i \rangle} := (z_1,\dots,z_{i-1},z_{i+1},\dots,z_n )\,.
$$ 
Note that performing the change of variables~$\omega \longmapsto -\omega$ in the pre-collisional term gives rise to
$$
 \begin{aligned}
 (C_{n,n+1} ^{\eps,i} G^\eps_{n+1} )(Z_n)&:=  \int 
 \Big(
 G^\eps_{n+1} (Z_n^{\langle i \rangle}, x_i,v_i',x_i+\e \omega ,v_{n+1} ') 
 -
 G^\eps_{{n+1} } (Z_n, x_i-\e \omega,v_{n+1} ) \Big) \\
&\qquad\qquad  \times\big( (
v_{n+1}  - v_i
)\cdot \omega  \big)_+ \, d \omega \,dv_{n+1} \, .
\end{aligned}
$$
Since the equation on~$G^\eps _n$ involves~$G^\eps _{n+1}$, obtaining 
the convergence of~$G^\eps _1$ requires understanding the behaviour of the whole family~$(G^\eps _n)_{n\geq 1}$. A natural first step consists in obtaining uniform bounds.
Denote by~$S^\eps_n$\label{Sn-def}  the group associated with free transport in $\cD^\eps_n$ (with specular reflection on the boundary). 
Iterating Duhamel's formula
$$
G^\eps _n  (t) = S^\eps_n(t) G_{n }^{\eps 0}+ \int_0^t  S^\eps_n(t-t_1)  C_{n,n+1} ^\eps G^\eps_{n+1}(t_1)\, dt_1
$$
 we can express formally the solution~$G^\eps _n(t)$ of the hierarchy~(\ref{BBGKYGC}) as a sum of operators acting on the initial data~:
\begin{equation} \label{eq:seriesexp}
G^\eps _n  (t) =\sum_{m\geq0}    Q^\eps_{n,n+m}(t) G_{n+m}^{\eps 0} \, ,
\end{equation}
where we have defined for $t>0$
$$
\begin{aligned}
Q^\eps_{n,n+m}(t) G_{n+m}^{\eps 0 }  := \int_0^t \int_0^{t_{1}}\dots  \int_0^{t_{m-1}}  S^\eps_n(t-t_{ 1}) C^\eps_{n,n+1}  S^\eps_{n+1}(t_{1}-t_{2}) C^\eps_{n+1,n+2}   \\
\dots  S^\eps_{n+m}(t_{m})    G_{n+m}^{\eps 0} \: dt_{m} \dots d t_{1} 
\end{aligned}$$
and~$Q^\eps_{n,n}(t)G^{\eps0}_{n} := S^\eps _n(t)G_{n}^{\eps0}$,  $Q^\eps_{n,n+m}(0)G^{\eps 0} _{n+m} := \delta_{m,0}G^{\eps0}_{n+m}$.

\medskip
Let us sketch how an a priori bound can be derived from the series expansion~(\ref{eq:seriesexp}).
  We say that~$a$ belongs to the set of (ordered, signed)  {\rm collision trees}  $ \cA^\pm_{n,   m} $ if~$a=(a_{i}, s_{i}) _{  1\leq i \leq   m}$ with labels~$a_{i}\in \{1,\dots, n+ i-1\}$   describing which particle collides with particle~$n+i$,  and with signs $s_{i} \in \{-,+\}$ specifying the collision hemispheres.
  Each elementary integral appearing in the operator~$Q^\eps_{n,n+m}$ thus corresponds to a collision tree in~$\cA^{\pm}_{n,m}$ with $m$ branching points,  involving a simplex in time~($t_{1}> t_{2}> \dots > t_{m}$). If we replace, for simplicity, the cross-section factors  by a bounded function (cutting off high energies), we immediately get that the integrals are bounded, for each fixed collision tree~$a \in \cA^\pm_{n,m}$, by~$\|g_0\|_{L^\infty}C_0^n(C_0 t)^m/ m! $. 
Since~$|\cA^\pm_{n,m}| = 2^m (m+n-1)! / (n-1)! $, summing over all trees gives rise to a bound~$C^{n+m} t^m\|g_0\|_{L^\infty} $. The series expansion is therefore uniformly absolutely convergent only for short times.   In  the  presence of the true cross-section factor, the result remains valid (with a slightly different value of the convergence radius), though the proof requires some extra care \cite{Ki75,La75}.

\subsection{Pseudo-trajectories and duality in~$L^2$} \label{sec:pseudo}
The   strategy described above does not account for possible cancellations between positive and negative terms in the collision integrals: the number of collisions is not under control a priori and this is responsible for the short time of validity of the result. To implement that strategy for long times, it is therefore crucial to take into account those cancellations, which are particularly visible on the invariant measure. The idea is therefore to take advantage of the proximity of the invariant measure to control pathological behaviours. Moreover this has to be done in  an adequate functional setting: the usual Lanford proof~\cite{La75} consists in using~$L^\infty$ norms, but this is problematic as the~$L^\infty$ norm of~$G^\eps_n(t)$ scales as~$\mu_\eps$. Actually (as apparent in~(\ref{eq: moment ordre 2}) for instance),  a weighted~$L^2$ setting is  more appropriate. In this paragraph we explain, in the case of a simplified dynamics without recollisions, how a duality argument enables us to exploit the a priori~$L^2$ bound~(\ref{eq: moment ordre 2}). 

\subsubsection{Pseudo-trajectories}\label{pseudotraj}
(see e.g.\,\cite{BGSS2})
For all   parameters~$(t_{i}, \omega_{i} , v_{n+i} )_{i= 1,\dots, m}$ with~$t_i>t_{i+1}$ and all collision trees $a \in \cA^\pm_{n,   m}$, one   constructs  {\it pseudo-trajectories} on $[0,t]$ 
$$\Psi^{\eps}_{n,m} = \Psi^{\eps}_{n,m} \Big(Z_n, (a_{i} , s_{i}, t_{i}, \omega_{i}, v_{n+i})_{i= 1,\dots, m}\Big)$$
iteratively on $i= 1,2,\dots, m$ as follows
(denoting by $Z^\e_{n+i}(\tau )$ the coordinates of  the  pseudo-particles at time~$\tau \leq t_{i}$, and setting $t_0 = t$):
\begin{itemize}
\item starting from $Z_n$ at time $t $,
\item transporting all existing particles backward on $(t_{i}, t_{i-1})$  (on ${\mathcal D}^\eps_{n +i-1}$ with specular reflection at collisions),
\item adding a new particle labeled $n +i$  at time $t_{i}$, at position $x^\e_{a_{i}} (t_{i}) +\eps s_{i}\omega_{i}$ and with velocity~$v_{n +i}$,
\item   applying the scattering rule (\ref{scattlaw}) if $s_{i}>0$.
\end{itemize}
We discard non admissible parameters for which this procedure is ill-defined; in particular we exclude values of $\omega_{i}$ corresponding to an overlap of particles (two spheres at distance strictly smaller than $\e$) as well as those such that~$\omega_i \cdot \big( v_{n+i} -v^\e_{  a_{i}} (t_{i}^+)\big) \leq0$.
In the following we denote by~$\cG^\eps_m(a, Z_n )$ the set of admissible parameters.

\medskip

With these notations, one gets the following geometric representation of 
the correlation function $G^\eps_n$ : 
$$
\begin{aligned}
 G^\eps _n (t, Z_n) &=
 \sum_{m \geq 0} \sum_{a \in \cA^\pm_{n,   m} }\int_{\cG_{  m}^{\e}(a, Z_n )}    dT_{m}  d\Omega_{m}  dV_{n+1,n+m}\\
&\qquad \times \left(\prod_{i=  1}^{  m}  s_{i}\Big(\big( v_{n+i} -v^\e_{  a_{i}} (t_{i}^+)\big) \cdot \omega_{i} \Big)_+\right)  
G_{n+m}^{\eps 0} \big (Z^\e_{1+m}(0)\big)\, , 
\end{aligned}
$$
where $(T_{m}, \Omega_{m}, V_{n+1,n+m}) := (t_{i}, \omega_{i}, v_{n+i})_{  1\leq i\leq   m}$.

\medskip

 In the following we   concentrate on the case~$n=1$ since as explained above, it is the key to studying the covariance of the fluctuation field: our goal is indeed to study~$\displaystyle  \int dz_1     G^\eps _1 (t, z_1)  h(z_1)$ introduced in \eqref{eq:covCF}. 

\subsubsection{The duality argument in the absence of recollisions}
\label{duality-argument}

In the language of pseudo-trajecto\-ries, a {\it recollision} is a collision between pre-existing particles, namely a collision which does not correspond to the addition of a fresh particle in the backward pseudo-trajectory.

Let us assume momentarily that there is no recollision  in the pseudo-dynamics. Denoting by~$Q^{\eps0}_{1,1+m}$   the restriction of~$Q^\eps_{1,1+m}$ to pseudo-trajectories without recollision, and recalling the series expansion~(\ref{eq:seriesexp}), we therefore  focus in this paragraph on
 $$
 I^0 : =    \sum_{m \geq 0} I^0_m : =   \sum_{m \geq 0} \int dz_1 h(z_1) Q^{\eps0} _{1,1+m} (t)   G^{\eps 0}_{1+m} \,.
 $$
 Let us fix the integer~$m\geq 0$.
Expanding the collision operators leads to
$$
I^0_m
=  \sum_{a \in \cA^\pm_{1,   m} }
\int_{\mathcal P_a}  dz_1 h(z_1)
   dT_{m}  d\Omega_{m}  dV_{ 2,  m+1}  \left(\prod_{i= 1}^{m}  s_{i}\Big(\big( v_{1+ i} -v^\e_{  a_{ i}} (t_{ i}^+)\big) \cdot \omega_{ i} \Big)_+\right)  
G_{1+m}^{\eps 0} \big (Z^\e_{1+m}(0)\big)\;,
$$
where~$\mathcal P_a $ is the subset of~$\D \times ([0,t] \times {\mathbb S}^{d-1} \times \mathbb \R^d)^{m}$ such that for any~$z_1, ( t_i,\omega_i,v_{1+i})_{1 \leq i \leq m} $ in~$\cP_a$, the associate backward pseudo-trajectory satisfies the requirements that as time goes from~$t$ to~$0$,  there are exactly~$m$ collisions according to the collision tree~$a$, and no recollision.
 Recall that a tree $a$ encodes both
 the labels of the colliding particles (namely~$1+i$ and~$a_{i}$)
  and the signs~$s_{i}$   prescribing  at each collision if there is    scattering or not. 
  
  \medskip
  
Given a tree~$a \in \cA^\pm_{1,m }$, consider the change of variables, of range~$\cR_a$:
\begin{equation}
\label{change of variables}
\big(z_1, ( t_i,\omega_i,v_{1+i})_{1 \leq i \leq m}\big) \in \cP_a\longmapsto Z^\e_{1+m}(0) \in \, \cR_a \, .
\end{equation}
It is injective  since the particles evolve by free-transport with no recollision,  and its jacobian  is
$$
 \frac1{\mu_\eps ^{m}} \prod_{i=1}^{m} \Big(\big(
v_{1+i}-v^\e_{a_i}(t_i^+)\big)\cdot \omega_i
\Big)_+ \, .
$$
Denoting by~$z_1^\e(t,Z_{1+m})$ the configuration of   particle~1 at time~$t$ starting from~$Z_{1+m} \in \cR_a$ at time~$0 $, one  can therefore write  
$$
I^0_m 
= \sum_{a \in \cA^\pm_{1,m}}  { \mu_\eps^{m}   }  \int_{\cR_a} dZ_{1+m}
  G^{\eps0}_{1+m} ( Z_{1+m} ) h\big(z_1^\e(t,Z_{1+m})\big )   \prod_{i=1}^{m} s_i\, .
$$
Note that   the restriction to~$\cR_a$ implies  that~$Z_{1+m}$ is configured in such a way that collisions will take place in a prescribed order (first~$1+m$ with~$a_{m}$, then~$m$ with $a_{m-1}$,  etc.)
  and with prescribed successions of scatterings or not.  
Using the exchangeability of the initial distribution, we can symmetrize over the labels of particles   and  set
\begin{equation}
\label{defPhiNk}
\Phi^0_{m+1}(Z_{m+1}):= \frac  { \mu_\eps^{m}   }{(m+1)!}   \sum_{\sigma \in {\mathfrak S}_{m+1}}  \sum_{a \in \cA^\pm_{1,m}}
h\big(z^\eps_{\sigma (1)}(t,{\color{blue}} Z_{\sigma})\big) \indc_{\{ Z_{\sigma} \in \cR_a \}    }\prod_{i=1}^{m} s_i 
\end{equation}
where~${\mathfrak S}_{m+1}$ denotes the permutations of~$\{1,\dots,m+1\}$, $\sigma = (\sigma(1),\cdots,\sigma(m+1))$ and
$$ Z_\sigma = (z_{\sigma(1)},\dots,z_{\sigma(m+1)})\;.$$
By definition, $\Phi^0_{m+1}$ encodes $m$ independent  constraints of size~$1/\mu_\eps$ corresponding to the collisions in the pseudo-dynamics on $[0,t]$, so we expect
$$\int |\Phi^0_{m+1} (Z_{m+1} ) | M^{\otimes (m+1)}(V_{m+1}) dZ_{m+1} \leq  C(Ct)^m$$
for some $C >0$.
 In order to estimate 
\begin{equation}
\label{eq: integrale PhiNk}
I^0_m =  \int_{ } dZ_{m+1}
  G^{\eps0}_{m+1} ( Z_{m+1} )   \Phi^0_{m+1}(Z_{m+1})\, ,
\end{equation}
the key idea is now to  use the  Cauchy-Schwarz inequality
to
  decouple the initial fluctuation from the dynamics on $[0,t]$: indeed, setting 
  \begin{equation} \label{eq:notaver}
\bbE_\eps( \Phi^0_{m+1}) = \bbE_\eps \left( \frac{1}{\mu_\eps^{m+1}}
\Big( \sum_{(i_1, \dots i_{m+1} )  }  
\Phi^0_{m+1} \big( {\bf z}^{\eps}_{i_1},\dots,{\bf z}^{\eps}_{i_{m+1}} \big) 
\Big) \right)
 \end{equation}
and introducing the centered variable 
\begin{equation} \label{eq:hatPhidef}
\hat \Phi^0_{m+1}\left({\mathbf Z}^\eps_{\cN}\right):= \frac{1}{\mu_\eps^{m+1}}\sum_{(i_1, \dots i_{m+1} )  } \Phi_{ m+1}^0 \big( {\bf z}^{\eps}_{i_1},\dots,{\bf z}^{\eps}_{i_{m+1}} \big)  - \bbE_\eps( \Phi^0_{m+1})\;,
\end{equation}
we have  
\begin{equation}
\label{CauchySchwarz}
    \begin{aligned}
 \sum_{m\geq 0}\bbE_\eps\Big( \hat\Phi^0_{m+1}\left({\mathbf Z}^\eps_{\cN}\right)
  \sum _{i=1}^\cN  g_0\big({\bf z}^{\eps}_{i}\big)
   \Big)
 &  :=
 \sum_{m\geq 0}\bbE_\eps\Big( \mu_\eps^{\frac12} \,
 \hat\Phi^0_{m+1}\,
  \zeta^\eps_0(  g_0)
   \Big)
  \\
 & \leq 
\bbE_\eps \Big( (\zeta^\eps_0 (g_0)) ^2 \Big)^{1/2}
   \;   \sum_{m\geq 0} \bbE_\eps \Big(   \, 
  \mu_\eps \Big(
  \hat\Phi^0_{m+1}
  \Big)^2  \; 
  \Big)^{1/2} \,,
  \end{aligned}
 \end{equation}
 and $I^0$ differs from the above quantity by a small error coming from the subtraction of the average
 (which will be shown to be negligible).
 
One important step in this paper will be  the estimate of the last expectation in \eqref{CauchySchwarz}. It requires to expand  the square and to control the cross products using the clustering structure of $ \hat \Phi^0_{m+1}(Z_{m+1} ) \hat \Phi^0_{m+1}(Z'_{m+1} )$.
This will be achieved in Proposition \ref{prop: Quasi-orthogonality estimates}.

\bigskip
At this stage, the duality method does not seem to be much better than the usual method since we expect an estimate of the form
$$|I^0_m|  \leq C (Ct)^m\,,$$
which diverges as $m\to \infty$  despite the fact that it does not even take into account pseudo-dynamics involving recollisions, 
for which the change of variables~{\rm(\ref{change of variables})} is not injective.

 However, since the duality method  somehow  ``decouples" the dynamics and the initial distribution, it will be easier to introduce additional constraints on the dynamics.
 Typically we will require that 
 \begin{itemize}
 \item the total number $m$ of collisions remains under control (much smaller than $|\log \eps|$);
 \item the number of recollisions per particle is bounded, in order to control the defect of injectivity in~{\rm(\ref{change of variables})}.\end{itemize}

\subsection{Sampling}
\label{sec: time sampling} 

As in \cite{BGSR2}, we   introduce a pruning procedure to control the number of terms in the expansion (\ref{eq:seriesexp}) as well as the occurrence of recollisions. We shall rely on  the  geometric interpretation of this expansion: to have a convergent series expansion on a long time~$(0,\theta)$, $\theta \gg 1$, we shall stop the (backward)  iteration whenever one of the two following conditions is fulfilled:
\begin{itemize}
\item super-exponential branching : on the time interval $(\theta- k\tau, \theta - (k-1) \tau)$, with~$\tau\ll 1$ to be tuned, the number~$n_k $ of created particles is larger than~$ 2^k$;
\item recollision : on the time interval $(\theta - (k-1) \tau -r\delta, \theta - (k-1) \tau - (r-1) \delta)$ with~$\delta \ll \tau$ to be tuned, there is at least one recollision.
\end{itemize}
Note that this sampling is more involved than in \cite{BGSR2} since we essentially stop the iteration as soon as there is one recollision in the pseudo-dynamics~: this will be used to apply the duality method. Note also that both conditions (controlled growth and absence of recollision) have to be dealt with simultaneously~: it is indeed hopeless to control the number of recollisions if the number of collisions can be of the order of $|\log \eps|$.

The principal part of the expansion will   correspond to all pseudo-trajectories for
  which the number of created particles on each time step~$(\theta- k\tau, \theta - (k-1) \tau)$, for~$1 \leq k \leq  \theta/\tau$, is smaller than~$ 2^k$, and for which there is no recollision. 
  Recalling that~$Q^{\eps0}_{n,n+m}$ denotes the restriction of~$Q^\eps_{n,n+m}$ to pseudo-trajectories without recollision, and setting~ $K := \theta/\tau$ and~$N_k = 1+\dots + n_k$, we thus define the main part of the expansion as
   \begin{equation} \label{eq:G1epsmain}
    G^{\eps,{\rm main}} _1 (\theta ):=  \sum _{( n_k \leq 2^k)_{k\leq K} } Q^{\eps0} _{1, n_1} (\tau) \dots Q^{\eps0}_{N_{K-1}, N_K} (\tau)   G^{\eps0}_{N_K}  \, .
 \end{equation}
 In order to prove that~$   G^{\eps} _1 -   G^{\eps,{\rm main}} _1 $ is small, we will use the duality argument discussed in Section \ref{duality-argument}, together with an a priori control on the number of recollisions allowed in the dynamics.  We will therefore need to restrict the support of  the initial data, in a way which is harmless in the limit~$\mu_\eps \to \infty$.   Given  an integer~$\gamma\geq 2$, we define a {\it microscopic cluster of size~$\gamma$} as a set~$\cG $ of~$\gamma$ particles in~$\D$ such that
\begin{equation}
\label{eq: Upislon}
  (z,z') \in \cG\times\cG \Longleftrightarrow \exists \, z_{1} = z,z_2,\dots,z_{\ell}= z' \,  \,  \, \mbox{in} \,   \,  \, \cG\, \,     \, \mbox{s.t.} \,  \,|x_{i}-x_{i+1} | \leq 2 \bbV \delta \, , \quad \forall1 \leq i \leq \ell-1\, ,
\end{equation}
for some parameter $\bbV \in \R^+$ which will be tuned later, in Proposition \ref{proposition - high energies},
as a cut-off on the  energies.

We define $\Upsilon_N^\eps$ as  the set
 of configurations~$ Z_N \in {\mathcal D}^\eps_N $ such that for all integers~$1 \leq k \leq \theta/\tau$ and~$1 \leq r \leq  \tau /\delta$,  any cluster present in the configuration~${\mathbf Z}^\eps_N(\theta- (k-1)  \tau - r\delta)$ is of size at most~$ \gamma$.  
%We  denote by~$
% \widetilde  \bbE_\eps$ the corresponding  equilibrium expected value  restricted to~$\Upsilon^\eps_N$:
% $$
% \widetilde  \bbE_\eps (h_\cN) :=   \bbE_\eps (\indc_{  \Upsilon_\cN^\eps} h_\cN)\, ,
% $$
%and setting \begin{equation}
%\label{eq: mesure conditionnee}
%\widetilde W^{\eps0 }_{N}   :=W^{\eps0}_{N}\,  \indc_{   \Upsilon_N^\eps}  \, ,
%\end{equation}  
%we denote by~$ \widetilde W_{n}^\eps (t, Z_{n})$ the associate solution to the Liouville equation. 
The parameters will be chosen so that the set $\Upsilon^\eps_\cN$ is typical under the initial measure. Thus the main contribution to the Duhamel expansion will be given by the restriction to configurations in $\Upsilon^\eps_\cN$. For this reason, we introduce the tilted measures
\begin{equation}
\label{eq: densities at t tilde}
\widetilde W_{N}^{\eps} = W_{N}^{\eps} \,  \indc_{   \Upsilon_{N}^\eps} 
\quad \text{and} \quad 
^c \widetilde W_{N}^{\eps} = W_{N}^{\eps} \, \indc_{^c \Upsilon_{N}^\eps} 
\end{equation}
and the corresponding correlation functions $\left(\widetilde G^{\eps }_{n}\right)_{n \geq 1}$, $\left( ^c \widetilde G^{\eps }_{n}\right)_{n \geq 1}$
defined as in \eqref{eq: densities at t}.
%$$
% \widetilde G^{\eps }_{n}(t): = \sum_{m\geq0}    Q^\eps_{n,m+n}(t) \, G^{\eps0}_{m+n}\,  \indc_{   \Upsilon_{m+n}^\eps} \,, \quad n \geq 1\;.
% $$
For the measure supported on $\Upsilon_{N}^\eps$,  it is easy to see that if the velocities of the particles at play  at time~$\theta - (k-1) \tau - r \delta$  are under control (the total energy is less than~$\frac12 |\bbV|^2 $, with~$|\bbV \delta | \gg \e$), then on the time interval~$(\theta - (k-1) \tau -r\delta, \theta - (k-1) \tau - (r-1) \delta)$, two particles from different clusters will not be able to recollide. 
 
Now recall that~$K = \theta/\tau$ and~$N_k = 1+\dots + n_k$ (where~$n_k$ is the number  of created particles on the interval~$(\theta- k\tau, \theta - (k-1) \tau)$ in the backward dynamics), and let us set~$R: = \tau /\delta$.  Defining
$$Q^{\rm rec }_{n,n+m} :=Q^\eps_{n,n+m}-Q^{\eps0}_{n,n+m}$$
the restriction of~$Q^\eps_{n,n+m}$ to pseudo-trajectories which have at least one recollision, we can write the following decomposition of~$ \widetilde G^\eps _1$:
 \begin{equation}
\label{main-decomposition}
 \widetilde G^\eps _1 (\theta ) =  
   G^{\eps,{\rm main}} _1 (\theta ) -G^{\eps,{\rm clust}}_{1} (\theta)  +    G^{\eps,{\rm exp}} _1 (\theta )  +   G^{\eps,{\rm vel}} _1 (\theta ) +   G^{\eps,{\rm rec }} _1 (\theta )
\end{equation}
 with  
  $$
G^{\eps,{\rm clust}}_{1} (\theta)  :=  \sum _{( n_k \leq 2^k)_{k\leq K} } Q^{\eps0} _{1, n_1} (\tau) \dots Q^{\eps0}_{N_{K-1}, N_K} (\tau) \,  ^c \tilde G^{\eps0}_{N_K}
%  \indc_{^c \Upsilon^\e_{N_K}}
 $$ 
 and where
 $$
   G^{\eps,{\rm exp}}_{1} (\theta)  :=  \sum_{k = 1} ^{K } \sum _{( n_j \leq 2^j)_{j\leq k-1 } }\sum_{n_k >2^k} Q^{\eps0} _{1, n_1} (\tau) \dots Q^{\eps0}_{N_{k-1}, N_k} (\tau) \widetilde G^\eps_{N_k} (\theta- k \tau )
 $$
 is the error term coming from super-exponential trees. The term~$   G^{\eps,{\rm vel}} _1 (\theta ) +   G^{\eps,{\rm rec }} _1 (\theta )$ encodes the occurrence of a recollision, depending on the size of the energy at stopping time compared to some value~$ \frac12\mathbb V^2$ to be tuned later. Let us define those two remainder terms: we denote by~$n_k^{\rm{rec}}\geq 0$ the number of particles added on the   time step~$ (\theta - (k-1) \tau -r\delta, \theta - (k-1) \tau - (r-1) \delta) $ (on which by definition there is a recollision), and by~$n_k^0:=n_k - n_k^{\rm{rec}}$ the number of  particles added on the time step~$ (\theta - (k-1) \tau - (r-1) \delta,\theta - (k-1) \tau) $  (on which by definition there is no recollision). We  then define 
 $$\begin{aligned}
 &   G^{\eps,{\rm rec }} _1 (\theta ):=  \sum_{k = 1} ^{K } \sum _{( n_j \leq 2^j)_{j\leq k-1 } }  \sum_{ r = 1}^R  \sum_{n_k\geq 0    }   \sum_{   n_k^0 + n_k^{\rm{rec}}=n_k}Q^{\eps0} _{1, n_1} (\tau) \dots Q^{\eps0}_{N_{k-2}, N_{k-1} } (\tau)      \\
 &\qquad \circ Q^{\eps0} _{N_{k-1},N_{k-1} +n_k^{0}  }( (r-1) \delta)  Q^{\rm rec } _{N_{k-1} +n_k^0 , N_{k-1} +n_k^0 + n_k^{\rm{rec}}   }  ( \delta) \widetilde G^{\eps}_{N_k}  (\theta- (k-1)  \tau - r\delta  ) \indc_{|V_{N_k} | \leq\mathbb V} 
 \end{aligned}
$$
 the error term due to  the occurrence of a recollision, with controled velocities at stopping time, and finally
  $$
 \begin{aligned}
&  G^{\eps,{\rm vel}} _1 (\theta ) 
:=  \sum_{k = 1} ^{K } \sum _{( n_j \leq 2^j)_{j\leq k-1 } }  \sum_{ r = 1}^R  \sum_{n_k\geq 0    }   \sum_{   n_k^0 + n_k^{\rm{rec}}=n_k}Q^{\eps0} _{1, n_1} (\tau) \dots Q^{\eps0}_{N_{k-2}, N_{k-1} } (\tau)  \\
 & \qquad \circ Q^{\eps0} _{N_{k-1},N_{k-1} +n_k^{0}  }( (r-1) \delta)  Q^{\rm rec } _{N_{k-1} +n_k^0 , N_{k-1} +n_k^0 + n_k^{\rm{rec}}   } ( \delta)  \widetilde G^{\eps}_{N_k} (\theta- (k-1)  \tau - r\delta   )  \indc_{|V_{N_k} | > \mathbb V} 
  \end{aligned}$$
  the error coming from large velocities.

\subsection{Analysis   of the remainder terms}
\label{section - analysis}
 
 Recall that our aim is to compute the integral in~\eqref{eq:covCF}.
According to the previous paragraph, recalling the definition
 $$
^c \widetilde G^{\eps }_{1}(\theta)  :=
\sum_{m\geq0}    Q^\eps_{1,m+1}(\theta) \,  ^c \tilde G^{\eps0}_{m+1}
= G^\eps_1(\theta)  - \widetilde G^\eps_1(\theta) \, , 
$$ there holds
\begin{equation}\label{main-decomposition-final}
 G^\eps_1(\theta) =   G^{\eps,{\rm main}} _1 (\theta ) -G^{\eps,{\rm clust}}_{1} (\theta) +    G^{\eps,{\rm exp}} _1 (\theta )  +   G^{\eps,{\rm vel}} _1 (\theta ) +   G^{\eps,{\rm rec }} _1 (\theta )+ {}^c \widetilde G^{\eps }_{1} (\theta)\, .
\end{equation}
The    remainder terms~$ G^{\eps,{\rm clust}}_{1} (\theta) $ and~$ ^c \widetilde G^{\eps }_{1}(\theta) $ consist essentially in measuring the cost of the constraint on~$ \Upsilon_{m+1}^\eps$.
 They are easily shown to be small thanks to the invariant measure: the following proposition is proved in Section~\ref{section - conditioning}.
\begin{Prop}[Cost of restricting the initial data]
\label{proposition - conditioning}
If the parameters $\theta, \delta, \bbV$ satisfy, for some $\gamma \in \N$, 
\begin{equation}
\label{H1}
  \lim_{\mu_\eps \to \infty} 
\theta  \, \mu_\eps^{\gamma+3} \;  \delta^{d \gamma -1} \;\bbV^{d \gamma }    = 0
 \end{equation}
 and
 if  $\theta, \tau$ are chosen such that 
 \begin{equation}
\label{H2}
  \lim_{\mu_\eps \to \infty} {\theta\over \tau  \log|\log \eps|}= 0 \,,
 \end{equation}
%\begin{equation}
%\label{H11}
% \lim_{\mu_\eps \to \infty} \theta^{N_K}  \, \mu_\eps^{-\frac14}  =0 
% \end{equation}
 then
\begin{equation}
\label{eq: borne G clust}
\lim_{\mu_\eps \to \infty}  \int dz_1 G^{\eps,{\rm clust}}_1(\theta,z_1) h(z_1)  = \lim_{\mu_\eps \to \infty}  \int dz_1 {}^c \widetilde G^{\eps }_{1}(\theta,z_1) h(z_1) 
=0\,.
\end{equation}
Furthermore, the probability of the complement of  $\Upsilon^\eps_\cN$ is bounded by
\begin{equation}
\label{eq: complementaire Upsilon}
\bbP_\eps \big(  ^c \Upsilon^\eps_\cN \big) \leq  \theta \big( \gamma \; \bbV \big)^{d \gamma} \;\mu_\eps^{\gamma +1} \, \delta^{d \gamma-1}\, 
\end{equation}
and
  there holds
\begin{equation}
 \label{eq: borne reste avec puissances en cN}
 \Big|  \bbE_\eps\left(  \zeta^\eps_0(  g_0)  \indc_{ \Upsilon^\eps_\cN}  \right) \Big| 
\leq    C_\gamma \|  g_0\|_{L^2_M}  \; 
\theta^{\frac12} \;  \bbV ^{\frac{d \gamma}{2}} \;\mu_\eps^{\frac{\gamma +1}{2}} \, \delta^{\frac{d \gamma-1}{2}} \, .
 \end{equation}
%Furthermore 
%\begin{equation}
%\label{eq: mesure g0 sous tilde E}
%\lim_{\mu_\eps \to \infty} \mu_\eps \Big| \widetilde \bbE_\eps \big( \zeta^\eps_0 (g_0) \big) \Big| =0\,.
%\end{equation}
\end{Prop}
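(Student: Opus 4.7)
The plan is to establish the three estimates in the order (\ref{eq: complementaire Upsilon}), (\ref{eq: borne reste avec puissances en cN}), and then the two vanishing limits in (\ref{eq: borne G clust}). The guiding principle is that ${}^c\Upsilon^\eps_\cN$ is a rare event under the Gibbs measure, and Cauchy-Schwarz in $L^2$ lets us convert this rareness into smallness of the targeted quantities, using the a priori bound (\ref{eq: moment ordre 2}) and a uniform fourth-moment analogue.

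For (\ref{eq: complementaire Upsilon}), I would decompose
\begin{equation*}
{}^c\Upsilon^\eps_\cN \; =\; \bigcup_{k=1}^{K} \bigcup_{r=1}^{R}\cB_{k,r},\qquad \cB_{k,r}:=\big\{\text{a cluster of size }\gamma\text{ occurs in }\mathbf{Z}^\eps_\cN(\theta-(k-1)\tau-r\delta)\big\},
\end{equation*}
a union of $KR=\theta/\delta$ events. By the invariance of the Gibbs measure under the hard-sphere flow, $\bbP_\eps(\cB_{k,r})$ is independent of $(k,r)$, so a union bound reduces everything to $\bbP_\eps(\cB_{0,0})$, which Markov's inequality dominates by the expected number of clusters at time $0$. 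The Mecke/Campbell formula for the grand-canonical ensemble bounds the relevant correlation function by $\mu_\eps^{\gamma} M^{\otimes \gamma}$, and the volume of connected $\gamma$-configurations — summed over spanning trees via Cayley's $\gamma^{\gamma-2}$ formula — is at most $|\T^d|(C\gamma\bbV\delta)^{d(\gamma-1)}$; the remaining factor of $\mu_\eps$ enters when one converts the integrated density into a per-event probability, giving the stated bound.

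The estimate (\ref{eq: borne reste avec puissances en cN}) is then one line of Cauchy-Schwarz: since $g_0$ is mean free, $\bbE_\eps(\zeta^\eps_0(g_0))=0$ and hence
\begin{equation*}
\big|\bbE_\eps(\zeta^\eps_0(g_0)\,\indc_{\Upsilon^\eps_\cN})\big|\;=\;\big|\bbE_\eps(\zeta^\eps_0(g_0)\,\indc_{{}^c\Upsilon^\eps_\cN})\big|\;\le\;\bbE_\eps(\zeta^\eps_0(g_0)^2)^{1/2}\;\bbP_\eps({}^c\Upsilon^\eps_\cN)^{1/2},
\end{equation*}
and one applies (\ref{eq: moment ordre 2}) together with (\ref{eq: complementaire Upsilon}). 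The same recipe disposes of $^c\widetilde G^\eps_1$ in (\ref{eq: borne G clust}): using the integral representation (\ref{eq:covCF}) with $^c\widetilde W^\eps_N$ in place of $W^\eps_N$ one gets
\begin{equation*}
\int {}^c\widetilde G^\eps_1(\theta,z)\,h(z)\,dz\;=\;\bbE_\eps\big(\zeta^\eps_0(g_0)\,\zeta^\eps_\theta(h)\,\indc_{{}^c\Upsilon^\eps_\cN}\big),
\end{equation*}
and a double Cauchy-Schwarz bounds this by $\|h\|_{L^2_M}\,\bbE_\eps(\zeta^\eps_0(g_0)^4)^{1/4}\,\bbP_\eps({}^c\Upsilon^\eps_\cN)^{1/4}$, which vanishes under (\ref{H1}) provided a uniform fourth-moment bound on $\zeta^\eps_0(g_0)$ is in hand — this is a direct grand-canonical computation.

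For the remaining term $G^{\eps,{\rm clust}}_1(\theta)$, the duality strategy of Section \ref{duality-argument} must be applied slab by slab. Expanding each $Q^{\eps 0}_{N_{k-1},N_k}(\tau)$ through the change of variables (\ref{change of variables}) — injective on $Q^{\eps 0}$ precisely because recollisions are forbidden there — and composing over $k=1,\dots,K$, one associates to each pruned pseudo-trajectory a function $\Phi^0_{N_K}$ of the form (\ref{defPhiNk}); symmetrizing gives
\begin{equation*}
\int h(z_1)\,G^{\eps,{\rm clust}}_1(\theta,z_1)\,dz_1\;=\;\sum_{(n_k\le 2^k)_{k\le K}}\bbE_\eps\big(\widehat\Phi^0_{N_K}(\mathbf{Z}^\eps_\cN)\,\zeta^\eps_0(g_0)\,\indc_{{}^c\Upsilon^\eps_\cN}\big)\;+\;o(1),
\end{equation*}
up to a centering error handled as in (\ref{CauchySchwarz}). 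Cauchy-Schwarz bounds each summand by $\bbE_\eps(\mu_\eps\,\widehat\Phi^0_{N_K}{}^{2})^{1/2}\,\bbE_\eps(\zeta^\eps_0(g_0)^2\indc_{{}^c\Upsilon^\eps_\cN})^{1/2}$. I would control the first factor by Proposition \ref{prop: Quasi-orthogonality estimates}, which gives essentially $(C\tau)^{n_k}/n_k!$ per slab; summation over $(n_k\le 2^k)_{k\le K}$ then yields a total prefactor at most $\exp(O(2^K))$, which is sub-polynomial in $\mu_\eps$ since $K=\theta/\tau=o(\log|\log\eps|)$ by (\ref{H2}). The second factor is $O(\bbP_\eps({}^c\Upsilon^\eps_\cN)^{1/4})$ by the argument of the previous paragraph, so the sum vanishes under (\ref{H1}). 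The main obstacle is precisely this balance: one must absorb the $2^K$-fold pruned-tree combinatorics by (\ref{H2}) and still leave the $\mu_\eps^{\gamma+1}$ factor from clustering killed by (\ref{H1}) after the square root. The duality trick is what makes this possible, by separating the combinatorial cost (on the dynamical side, via $\widehat\Phi^0_{N_K}$) from the bad-event indicator (on the initial-data side) — a single $L^\infty$-type estimate on $G^{\eps,{\rm clust}}_1$ would diverge hopelessly.
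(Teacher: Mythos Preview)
Your proposal is correct and follows essentially the same strategy as the paper: union bound plus invariance and a first-moment estimate for (\ref{eq: complementaire Upsilon}), Cauchy--Schwarz for (\ref{eq: borne reste avec puissances en cN}), H\"older with a fourth-moment bound (Proposition~\ref{Proposition - estimates on g0}) for both limits in (\ref{eq: borne G clust}), invoking the duality functions $\hat\Phi_{N_K}$ and Proposition~\ref{prop: Quasi-orthogonality estimates} for $G^{\eps,{\rm clust}}_1$. Two minor slips worth fixing: the bad cluster contains $\gamma+1$ particles (not $\gamma$), and the per-term bound coming from Lemmas~\ref{lem: hyp quasi orth 1}--\ref{lem: hyp quasi orth 2} is of order $(C\theta)^{N_K}$ rather than $\prod_k(C\tau)^{n_k}/n_k!$, though your conclusion $\exp(O(2^K))$ (hence sub-polynomial under (\ref{H2})) survives either way.
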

The control of high energies is also an easy matter thanks to the Gaussian bound on the initial data. The following result is proved in Section~\ref{section - high energies}.  
\begin{Prop}[Cost of high energies]
\label{proposition - high energies}
If  there exists~$a>0$ such that
\begin{equation}
\label{existsa}
\lim_{\mu_\eps \to \infty} {\eps^a  \over \delta} = 0   \end{equation}
and if the parameters $\theta, \tau$ 
satisfy \eqref{H2} 
% are chosen such that \begin{equation}
%\label
%  \lim_{\mu_\eps \to \infty} {\theta\over \tau  \log|\log \eps|}= 0 \,,
% \end{equation}
then choosing $\bbV = |\log \eps|$,
$$
\lim_{\mu_\eps \to \infty}  \int dz_1  \,  G^{\eps,{\rm vel}}_1(\theta,z_1) h(z_1) 
=0
 \, .
$$
\end{Prop}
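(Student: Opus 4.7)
The plan is to exploit the Gaussian decay of the invariant measure in velocities: since $\bbV = |\log \eps|$, the tail $\int_{|V|>\bbV} M^{\otimes N}(V) dV$ decays faster than any polynomial in $\eps$, which will beat all the other algebraic factors generated by the collisional expansion, provided the latter are controlled under assumption \eqref{H2}.

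First I would apply the duality/Cauchy--Schwarz strategy of Section~\ref{duality-argument}, written for the error term $G^{\eps,{\rm vel}}_1$. For fixed $k$, $r$, and fixed numbers $(n_j)_{j\leq k-1}$ and $n_k = n_k^0 + n_k^{\rm rec}$, the corresponding piece of $\int dz_1 G^{\eps,{\rm vel}}_1(\theta, z_1) h(z_1)$ can be expressed, after the change of variables~(\ref{change of variables}) applied to the free-transport portion $Q^{\eps 0}$, as an expectation $\bbE_\eps\bigl(\zeta^\eps_0(g_0)\,\hat\Phi^{\rm vel}\bigr)$ for an appropriate functional $\Phi^{\rm vel}$ incorporating $\indc_{|V_{N_k}|>\bbV}$ and the recollisional operator $Q^{\rm rec}$ on the last time step of length $\delta$. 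By Cauchy--Schwarz this is bounded by $\|g_0\|_{L^2_M}\,\bigl(\mu_\eps\,\bbE_\eps((\hat\Phi^{\rm vel})^2)\bigr)^{1/2}$, using the a~priori bound \eqref{eq: moment ordre 2}.

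Next, I would insert the Gaussian tail. The initial correlation function $\widetilde G^{\eps 0}_{N_k}$ carries the factor $M^{\otimes N_k}(V_{N_k})$, and since total kinetic energy is conserved along pseudo-trajectories, the indicator $\indc_{|V_{N_k}|>\bbV}$ can be pulled back to an indicator on the initial velocity configuration. Splitting $M(v) = M(v)^{1/2} M(v)^{1/2}$ and using Markov's inequality, the restriction to $|V_{N_k}|>\bbV$ produces a super-polynomial factor $\exp(-\bbV^2/4) = \exp(-|\log\eps|^2/4)$ times a harmless $C^{N_k}$ coming from the Gaussian constant. This is the core of the estimate: with $\bbV = |\log\eps|$, this factor decays faster than any power of $\eps$.

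Finally I would estimate the remaining combinatorial and analytic factors by rough $L^2$ dual bounds of the type developed in Section~\ref{section - L2}, applied to the product $Q^{\eps 0}\cdots Q^{\eps 0}\,Q^{\rm rec}$ acting on $\widetilde G^\eps_{N_k}$. Under hypothesis \eqref{H2} one has $K = \theta/\tau \ll \log|\log\eps|$, hence $N_k \leq 2^{K+1} \leq |\log\eps|^{\log 2}$, so the number of collision trees $|\cA^\pm_{1,N_k}|$, the time factors $(C\tau)^{n_k^0}/n_k^0!\,(C\delta)^{n_k^{\rm rec}}/n_k^{\rm rec}!$, the sum $\sum_{k\leq K,\,r\leq R}$, and the defect of injectivity in (\ref{change of variables}) caused by $Q^{\rm rec}$ (which, by the geometric estimates of the appendix under \eqref{existsa}, costs at most a polynomial in $\eps^{-1}$ per recollision) all combine into at most $\exp(O(|\log\eps|\log\log|\log\eps|))$. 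This is crushed by the Gaussian factor $\exp(-|\log\eps|^2/4)$, giving the claimed limit.

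The main obstacle I foresee is the clean bookkeeping of the factor coming from $Q^{\rm rec}$: since recollisions destroy the injectivity of the change of variables~(\ref{change of variables}), the $L^2$ dual bound must be applied with some care, using \eqref{existsa} to ensure that a recolliding pair has at most $O((\bbV\delta/\eps)^{d-1})$ geometric degrees of freedom, an estimate that grows only polynomially in $\eps^{-1}$ and is therefore absorbed by the Gaussian gain. Once this point is handled, every other bound can be kept very crude.
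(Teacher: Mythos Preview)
Your strategy diverges from the paper's and runs into a real obstacle. The paper does \emph{not} invoke the duality/Cauchy--Schwarz mechanism for $G^{\eps,{\rm vel}}_1$; it uses a direct $L^\infty$ bound. Since
\[
|\widetilde G^\eps_{N_k}(t, Z_{N_k})| \leq C\,\mu_\eps\, M^{\otimes N_k}(V_{N_k})\,\|g_0\|_{L^\infty}
\]
uniformly in time, one may integrate in the pseudo-trajectory variables $(z_1, t_i, \omega_i, v_{1+i})$ with absolute values throughout. No change of variables to $Z_{N_k}(t_{\rm stop})$ is needed, and hence no injectivity issue from $Q^{\rm rec}$ ever arises. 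The Gaussian tail $\exp(-\bbV^2/4)$ then beats the crude factor $R\,(C\theta)^{2^K}\mu_\eps$ under \eqref{H2} and \eqref{existsa}.

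Your route, by contrast, requires writing the contribution as $\bbE_\eps\bigl(\zeta^\eps_0(g_0)\,\hat\Phi^{\rm vel}\bigr)$ with $\Phi^{\rm vel}$ a function of $Z_{N_k}(t_{\rm stop})$, which forces you to reconstruct the forward dynamics including the recollisions on $(t_{\rm stop}, t_{\rm stop}+\delta)$. In Section~\ref{section - recollisions} this is achieved by indexing recollisions with $\bk_{N_k}\in\{0,\dots,\mathcal K_\gamma\}^{N_k}$, and the finiteness of $\mathcal K_\gamma$ relies precisely on the condition $|V_{N_k}|\leq\bbV$ together with the cluster bound: a particle confined to a cluster of size $\leq\gamma$ can recollide only boundedly many times, \emph{because} its displacement in time $\delta$ is at most $\bbV\delta$. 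On the set $\{|V_{N_k}|>\bbV\}$ this argument fails and you have no a priori bound on the number of recollisions beyond the global Vaserstein bound for $N_k$ spheres, which is super-exponential in $N_k$. Your proposed fix---estimating the ``geometric degrees of freedom'' of a recolliding pair by $O((\bbV\delta/\eps)^{d-1})$---controls the cost of one recollision, not how many there are; the sum over $\bk_{N_k}$ with unbounded entries is the actual obstruction. One could try to push through with the Vaserstein bound and check it is still dominated by $\exp(-|\log\eps|^2/4)$, but this is substantially more work than you have sketched, and the paper's $L^\infty$ shortcut makes it unnecessary.
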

It remains  to study~$  G^{\eps,{\rm exp}} _1 (\theta )$ and~$   G^{\eps,{\rm rec}}  _1 (\theta )$. For these two terms we   use the a priori $L^2$ control on fluctuations, and thus resort to the duality argument sketched in Paragraph~\ref{duality-argument}.  The following proposition is proved in Section~\ref{section - clustering estimates} thanks to the
quasi-orthogonality estimates of Section~\ref{section - L2} and the
 clustering estimates of Section~\ref{section - clustering estimates}, the extra smallness coming from the assumption that the tree becomes superexponential on a short  time interval of size~$\tau$.
\begin{Prop}[Superexponential trees]
\label{proposition - superexponential trees}
If the parameters $\delta, \bbV, \theta  $ satisfy~{\rm(\ref{H1})} and if
\begin{equation}
\label{H3}
  \lim_{\mu_\eps \to \infty} \;  \theta^3 \tau =0\,,
\end{equation}
then 
$$
\lim_{\mu_\eps \to \infty} \int dz_1  G^{\eps,{\rm exp}} _1 (\theta,z_1) h(z_1)  =0\,.$$
\end{Prop}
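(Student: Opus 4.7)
The plan is to apply the duality strategy of Section~\ref{duality-argument} to the error term $G^{\eps,{\rm exp}}_1(\theta)$, and to use the $L^2$ a priori bound~(\ref{eq: moment ordre 2}) together with the quasi-orthogonality estimate of Section~\ref{section - L2} (Proposition~\ref{prop: Quasi-orthogonality estimates}) and the clustering estimates of Section~\ref{section - clustering estimates}. The key point is that forcing more than $2^k$ collisions into a single slab of length $\tau$ produces a doubly-exponentially small factor, which more than compensates for the combinatorial growth of the collision-tree sum.

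Fix $k \in \{1,\dots,K\}$ and $(n_j)_{j \leq k}$ with $n_j \leq 2^j$ for $j<k$ and $n_k > 2^k$. Expanding the operators $Q^{\eps 0}_{N_{j-1},N_j}(\tau)$ into pseudo-trajectory integrals as in Section~\ref{pseudotraj}, applying the slab-by-slab change of variables~(\ref{change of variables}) and symmetrizing as in~(\ref{defPhiNk}), the corresponding summand of $\int dz_1\,h(z_1)\,G^{\eps,{\rm exp}}_1(\theta,z_1)$ becomes
$$
I^{\rm exp}_{k,(n_j)} = \int dZ_{N_k}\;\widetilde G^{\eps}_{N_k}(\theta - k\tau, Z_{N_k})\,\Phi^{(k)}_{N_k}(Z_{N_k}),
$$
where $\Phi^{(k)}_{N_k}$ is the symmetrized test function encoding a recollision-free backward pseudo-trajectory of duration $k\tau$ that contains exactly $n_j$ fresh collisions in each time slab. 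Using the time-invariance of the Gibbs measure and~(\ref{eq: densities at t}), this identity translates into $\bbE_\eps(\hat\Phi^{(k)}_{N_k}\zeta^\eps_0(g_0)\indc_{\Upsilon^\eps_\cN})/\sqrt{\mu_\eps}$ with $\hat\Phi^{(k)}_{N_k}$ built as in~(\ref{eq:hatPhidef}), up to an error controlled by~(\ref{eq: borne reste avec puissances en cN}). Cauchy--Schwarz~(\ref{CauchySchwarz}) combined with~(\ref{eq: moment ordre 2}) then gives $|I^{\rm exp}_{k,(n_j)}| \leq C \|g_0\|_{L^2_M}\,\bbE_\eps(\mu_\eps(\hat\Phi^{(k)}_{N_k})^2)^{1/2}$.

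Proposition~\ref{prop: Quasi-orthogonality estimates} and the clustering estimates of Section~\ref{section - clustering estimates} will bound $\bbE_\eps(\mu_\eps(\hat\Phi^{(k)}_{N_k})^2)$ by a sum over pairs of pseudo-trajectories whose typical per-tree contribution is of order $C^{N_k}\prod_{j=1}^{k}(C\tau)^{n_j}/n_j!$, after taking into account the combinatorial weight $|\cA^\pm_{1,N_k}| \leq 2^{N_k}N_k!$ and the factorials produced by the time-ordering simplex integrations. Summing over $n_k > 2^k$ produces a geometric tail dominated by $(C\tau)^{2^k}/(2^k)!$; summing over the allowed sequences $(n_j)_{j<k}$ gives a constant, since the corresponding series is just the short-time Lanford series on a time $\tau$. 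Summing finally over $k\leq K$ and taking the square root yields a bound of the form
$$
C_\theta\,\|g_0\|_{L^2_M}\sum_{k=1}^K \Big(\frac{(C\tau)^{2^k}}{(2^k)!}\Big)^{1/2}
$$
with $C_\theta$ a polynomial in $\theta$, which vanishes as $\mu_\eps\to\infty$ under hypotheses~(\ref{H2}) and~(\ref{H3}): (\ref{H2}) keeps $K$ smaller than $\log|\log\eps|$, and~(\ref{H3}) ensures that even the leading $k=1$ term, of order $\theta^{O(1)}\tau$, is small.

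The main obstacle lies in the quasi-orthogonality step: when expanding $(\hat\Phi^{(k)}_{N_k})^2$, one must verify that the pairing combinatorics of the two pseudo-trajectories preserves the full factorial decay $(C\tau)^{n_k}/n_k!$ rather than only a square-root thereof. This is precisely where the recollision-free constraint built into $\Phi^{(k)}_{N_k}$ and the cluster-expansion machinery of Proposition~\ref{prop: Quasi-orthogonality estimates} become essential, providing a clustering control sharp enough that the product of factorials $n_j!$ for the two paired trees reappears in the denominator and the super-exponential smallness of $(C\tau)^{2^k}/(2^k)!$ survives the duality manipulation.
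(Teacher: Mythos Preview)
Your overall architecture matches the paper's: define $\Phi_{N_k}$ via the change of variables~(\ref{change of variables 2}), center it, apply Cauchy--Schwarz against $\zeta^\eps_0(g_0)$, and bound $\bbE_\eps(\mu_\eps\hat\Phi_{N_k}^2)$ through Proposition~\ref{prop: Quasi-orthogonality estimates} fed by Lemmas~\ref{lem: hyp quasi orth 1}--\ref{lem: hyp quasi orth 2}. However, the quantitative claims in your last two paragraphs contain a genuine gap.

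The assertion that the quasi-orthogonality step preserves a ``full factorial decay $(C\tau)^{n_k}/n_k!$'' for \emph{both} paired trees is not what Lemma~\ref{lem: hyp quasi orth 2} delivers, and it is not clear it can be obtained. In that lemma only the first tree $T_\prec$ (fully embedded in $T''_\prec$) retains the slab constraint, contributing $\tau^{n_k}$; the $N_k-\ell$ clustering collisions extracted from the second tree are bounded crudely by $\theta$ each, producing $\theta^{N_k-\ell}$. Hence $\rho_\ell$ scales like $C^{N_k}\theta^{2N_k-\ell-1-n_k}\tau^{n_k}$, and after the square root the bound on $|I_{\mathbf n_k}|$ reads $(C\theta)^{N_{k-1}+n_k/2}\tau^{n_k/2}$ (this is~(\ref{eq: borne finale I})). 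The $\theta$-dependence is therefore \emph{exponential} in $N_{k-1}$, not the ``polynomial $C_\theta$'' you claim.

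The step you are missing is the one that actually uses the superexponential hypothesis: since $n_j\leq 2^j$ for $j<k$ while $n_k>2^k$, one has $N_{k-1}\leq 2^k\leq n_k$, whence
\[
(C\theta)^{N_{k-1}+n_k/2}\,\tau^{n_k/2}\ \leq\ (C\theta)^{3n_k/2}\,\tau^{n_k/2}\ =\ (C\theta^3\tau)^{n_k/2}\,.
\]
This absorption of the $\theta^{N_{k-1}}$ factor into the $n_k$-power is precisely why hypothesis~(\ref{H3}) takes the form $\theta^3\tau\to 0$; a polynomial-in-$\theta$ bound would leave the cube unexplained. With this in hand, summing over $n_k>2^k$ and over the at most $2^{k^2}$ admissible $(n_j)_{j<k}$ yields~(\ref{eq:superepxconcl}), and~(\ref{H3}) alone (together with~(\ref{H1}) for the centering remainder) closes the argument --- your appeal to~(\ref{H2}) is not needed here.
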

The possibility of recollisions makes the analysis of~$G^{\eps,{\rm rec}}  _1 $ more intricate : it is however possible to revisit the arguments of Section~\ref{section - clustering estimates},
to gain smallness thanks to the presence of a recollision on a time interval of size~$\delta$. The following   proposition is proved in Section~\ref{section - recollisions}.

  \begin{Prop}[Recollisions]\label{proposition - recollisions}
If the parameters $\delta, \bbV, \theta, \tau$ satisfy~{\rm(\ref{H1})}, \eqref{H2} and if~{\rm(\ref{existsa})} holds with~$0<a<1$, then 
$$
\lim_{\mu_\eps \to \infty} \int dz_1   G^{\eps,{\rm rec}}  _1 (\theta,z_1) h(z_1) =0 \, .
$$
\end{Prop}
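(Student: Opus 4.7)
The plan is to follow the duality template outlined in Section \ref{duality-argument} for $G^{\eps,{\rm rec}}_1$, with the crucial new ingredient being a geometric estimate exploiting that at least one recollision is forced within a time window of size $\delta \ll \tau$. The entire smallness must come from this window, since the remaining $K-1$ time steps of length $\tau$ contribute only combinatorial/volume factors already controlled in the superexponential analysis.

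First, I fix indices $k \leq K$, $r \leq R$ and $(n_j)_{j\leq k}$ satisfying the stopping constraints, and expand the operator product $Q^{\eps 0}_{1,n_1}(\tau) \cdots Q^{\eps 0}_{N_{k-1},N_{k-1}+n_k^0}((r-1)\delta)\, Q^{\rm rec}_{N_{k-1}+n_k^0,N_k}(\delta)$ as an integral over collision trees $a$ and pseudo-trajectory parameters $(T,\Omega,V)$, with the characteristic function enforcing: (a) no recollision on any interval other than $I_{k,r} := (\theta - (k-1)\tau - r\delta,\,\theta - (k-1)\tau - (r-1)\delta)$; (b) at least one recollision on $I_{k,r}$; (c) $|V_{N_k}|\leq \bbV$ at stopping time. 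Applying the change of variables \eqref{change of variables} to pass to the initial configuration $Z_{N_k}$ at time $\theta - (k-1)\tau - r\delta$, I obtain a test function $\Phi^{\rm rec}_{N_k}$ analogous to \eqref{defPhiNk}, carrying an additional factor $\indc_{\rm rec}$ for the recollision.

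Next, I apply the Cauchy--Schwarz step \eqref{CauchySchwarz}: the a priori bound \eqref{eq: moment ordre 2} controls $\zeta^\eps_0(g_0)$ in $L^2$, and it remains to bound $\sqrt{\bbE_\eps\bigl(\mu_\eps (\hat\Phi^{\rm rec}_{N_k})^2\bigr)}$. Expanding the square and invoking the quasi-orthogonality estimates (Proposition \ref{prop: Quasi-orthogonality estimates}), this reduces to an $L^1$ bound on pairs of pseudo-trajectory integrands weighted by a Gaussian. A delicate point is that the change of variables \eqref{change of variables} is no longer injective when a recollision occurs on $I_{k,r}$; however, the restriction to $\Upsilon^\eps_\cN$ ensures that any set of recolliding pseudo-particles is contained in a microscopic cluster of size at most $\gamma$, so the multiplicity is bounded by a constant $C_\gamma$.

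The core new estimate is the geometric recollision bound from Appendix \ref{geometric estimates}: for a pseudo-trajectory with velocities bounded by $\bbV$, requiring a prescribed recollision within the $\delta$-window $I_{k,r}$ carries an extra smallness factor of the form $(\eps/\delta)^{\alpha}\bbV^{\beta}$ with $\alpha(d)>0$ for $d\geq 3$. Combined with the routine bound $C^{N_k}\bigl((r-1)\delta\bigr)^{n_k^0}\delta^{n_k^{\rm rec}}\bbV^{C N_k}$ on the non-recolliding portion, and with the earlier no-recollision pseudo-trajectory bounds of Section \ref{section - clustering estimates} applied to the $k-1$ earlier steps of length $\tau$, one obtains after summation over the splitting $n_k = n_k^0 + n_k^{\rm rec}$ and over $r \leq R = \tau/\delta$ a bound of the form
\[
C\, \theta\, R \cdot \prod_{j\leq k} C(\tau)^{n_j} \cdot (\eps/\delta)^{\alpha} \bbV^\beta,
\]
uniform over subexponential trees. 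Summing over $k\leq K=\theta/\tau$ and $(n_j \leq 2^j)_{j\leq k}$, assumption \eqref{H2} controls $2^K$ against $\log|\log\eps|$ so that the product $\prod C(\tau)^{n_j}$ is subpolynomial in $\eps$; \eqref{H1} ensures the cluster restriction is harmless via Proposition \ref{proposition - conditioning}; and \eqref{existsa} with $0<a<1$ gives that $(\eps/\delta)^\alpha$ beats all logarithmic factors.

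The main obstacle is to extract the geometric smallness from the single $\delta$-window without losing it either to the non-injectivity of the pseudo-trajectory map or to the combinatorial blow-up of the tree summation. Both are resolved by the cluster restriction to $\Upsilon^\eps_\cN$ (bounding multiplicity) and the sampling scheme (isolating the recollision to a single step of length $\delta$), so that the no-recollision machinery of Section \ref{section - clustering estimates} applies verbatim on the remaining $k-1$ steps, leaving only the single recollision factor to be controlled by the Appendix estimate.
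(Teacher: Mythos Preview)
Your overall strategy matches the paper's: duality via Cauchy--Schwarz, the quasi-orthogonality Proposition~\ref{prop: Quasi-orthogonality estimates}, and a geometric recollision estimate localized to the $\delta$-window, with the restriction to $\Upsilon^\eps_\cN$ controlling the loss of injectivity. Two points, however, are either imprecise or incorrect and would prevent the argument from closing.

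First, the form of the geometric estimate you invoke is not what Appendix~\ref{geometric estimates} actually provides. Propositions~\ref{recollisionprop} and~\ref{vsingularityprop} do \emph{not} yield a factor $(\eps/\delta)^\alpha$; they give an absolute gain $\eps|\log\eps|$ (times polynomial factors in $\bbV\theta$), independent of $\delta$. The mechanism is that the first non-clustering collision in the forward flow constrains the deflection angle at its ``parent'' clustering collision, and integrating that angle contributes $\eps|\log\eps|$. The $\delta$-dependence arises separately, from the time integration, and the paper tracks it as $\delta^{\max(1,\,n_k^{\rm rec})}$ in Lemmas~\ref{lem: hyp quasi orth 1bis}--\ref{lem: hyp quasi orth 2bis}. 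The $\max$ is essential: even when $n_k^{\rm rec}=0$ (no creation on the $\delta$-window, but still a recollision among existing pseudo-particles), the first forward clustering collision is forced into $I_{k,r}$, contributing one power of $\delta$. Your bound $\delta^{n_k^{\rm rec}}$ misses this when $n_k^{\rm rec}=0$, and the subsequent sum over $r\leq R=\tau/\delta$ then blows up by a full factor $\delta^{-1}$ with nothing to compensate it. With the correct bookkeeping, the final small quantity after Cauchy--Schwarz and the summations is $(\tau/\delta)\cdot\delta^{1/2}\eps^{1/2}|\log\eps| = \tau(\eps/\delta)^{1/2}|\log\eps|$, and then~\eqref{existsa} with $0<a<1$ together with~\eqref{H2} concludes.

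Second, the non-injectivity is not bounded by a single constant $C_\gamma$: each of the $N_k$ pseudo-particles may undergo up to $\mathcal{K}_\gamma$ recollisions (the Vaserstein bound on the total number of collisions in a $\gamma$-body hard-sphere system), so the multiplicity is $(\mathcal{K}_\gamma+1)^{N_k}$. The paper restores injectivity precisely by adjoining a tuple of recollision counters $\bk_{N_k}=(\kappa_i)_{1\le i\le N_k}$ to the change of variables, and summing over $\bk_{N_k}$. This is harmless in the end since it is absorbed into $C^{N_k}$, but your phrasing ``bounded by a constant $C_\gamma$'' would not suffice.
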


\subsection{End of the proof of Theorem \ref{thmTCL}}\label{tuning parameters}

%All the parameters and most importantly the time scales will be tuned carefully  so that the error term vanish. The precise values will be fixed later, but the following orders of magnitude  can be useful to fix ideas: 
% {\color{blue} fix later}
%\begin{align}
%\label{eq: choix des parametres}
%   \tau \sim ... \, , \quad 
%\delta \sim \eps^{\alpha} \, \quad \mathbb V  \sim  \frac1{100\kappa_\gamma} |\log \eps| \, . 
%\end{align}
%Taking~$\gamma \gg 1$ allows $ \alpha >(d-1)/d$. 

To conclude the proof of the main theorem, it remains to study the convergence of the principal part, and to check that there exists a possible choice of parameters satisfying all assumptions (\ref{H1})(\ref{H2})(\ref{existsa})(\ref{H3}).

\begin{Prop}[Principal part]
\label{proposition - PP}
Under assumptions~{\rm(\ref{H1})(\ref{H2})(\ref{existsa})(\ref{H3})}, there holds 
$$\lim_{\eps \to 0} \int   G^{\eps,{\rm main}} _1 (\theta,z ) \,h(z)\, dz =  \int M(v)\,g(\theta,z ) \,h(z)\, dz\qquad \forall\theta \in\R^+\;,$$
where $g(\theta)$ is the solution of the linearized Boltzmann equation with initial datum $g_0$~:
\begin{equation}\label{eq:LBE}
\d_t g = \cL g \,.
\end{equation}
\end{Prop}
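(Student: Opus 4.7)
The plan is to compare $G_1^{\eps,\mathrm{main}}(\theta)$ term-by-term with a truncated Duhamel expansion for the linearized Boltzmann equation. Introduce the Boltzmann analog $T_{n,n+m}(t)$ of $Q^{\eps 0}_{n,n+m}(t)$, built from free transport of point particles and the linearized Boltzmann collision operator (collisions at coincidence, no exclusion, no recollisions by construction), and set
$$
M\, g_1^{\mathrm{main}}(\theta) := \sum_{(n_k\leq 2^k)_{k\leq K}} T_{1,n_1}(\tau)\cdots T_{N_{K-1},N_K}(\tau) \Big(M^{\otimes N_K}(V_{N_K})\,\sum_{i=1}^{N_K} g_0(z_i)\Big).
$$
The proof then splits into two steps: (a) show $\int (G_1^{\eps,\mathrm{main}}-M g_1^{\mathrm{main}})h\,dz\to 0$; (b) show the truncated Boltzmann sum $\int M g_1^{\mathrm{main}}\,h\,dz$ converges to $\int Mg(\theta)h\,dz$, where $g$ solves~\eqref{eq:LBE}.

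For step (a) I would work tree by tree. For each $a\in\cA^{\pm}_{1,m}$ with sampled times respecting the cutoff, the hard-sphere pseudo-trajectory integral differs from its Boltzmann counterpart through three effects: (i) the initial datum $G^{\eps0}_{N_K}$ versus $M^{\otimes N_K}\sum_i g_0(z_i)$; (ii) specular reflection on $\cD^{\eps}_{N_K}$ versus straight-line transport; (iii) creations at distance $\eps$ rather than at coincidence. Effect (i) is controlled by a low-density expansion of the Gibbs weight: $\cZ^\eps=1+O(\eps)$ and each exclusion factor $\indc_{|x_i-x_j|>\eps}-1$ contributes $O(\eps^{d-1})$ at bounded velocities, which, when combined with the cluster estimates of Section~\ref{section - L2}, allow us to replace $G^{\eps0}_{N_K}$ by the factorised Maxwellian with an error that is uniformly small on energies below $\bbV$. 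Effects (ii)--(iii) are the classical Lanford errors: since $Q^{\eps 0}$ excludes recollisions and the sampling together with $\bbV=|\log\eps|$ and assumption~\eqref{existsa} ensures $\eps\ll\bbV\delta\ll\bbV\tau\ll 1$, the two trajectories coincide outside a set of $\omega$-parameters of measure $O(\eps/\delta)^{d-1}$, and the $O(\eps)$ position shifts at creation are negligible against smooth $g_0$ and $h$.

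Summability and cutoff removal proceed as follows. For each block of length $\tau$, the duality bound~\eqref{eq: moment ordre 2} combined with the geometric change of variables~\eqref{change of variables} yields a uniform bound $C(C\tau)^{n_k}$ per block, so the product over $k\leq K$ stays controlled provided $\sum_k n_k\log(1/\tau)$ is bounded; under~\eqref{H3} and~\eqref{H2}, the constraint $n_k\leq 2^k$ with $K=\theta/\tau$ and $N_K\leq 2^{K+1}\ll|\log\eps|$ is consistent with the previous clustering and exclusion estimates. For step (b) I would invoke the global $L^2_M$ well-posedness of the linearized Boltzmann equation: $\|e^{\theta\cL}g_0\|_{L^2_M}\leq \|g_0\|_{L^2_M}$, and the Duhamel iteration of $e^{s\cL}$ converges in the dual pairing against $h\in L^\infty\cap L^2_M$, the contribution of the super-exponential trees $\{n_k>2^k\}$ vanishing as $K\to\infty$ by a Chebyshev-type estimate for the Poissonian creation process underlying $T_{n,n+1}$.

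The main obstacle is the tree-by-tree convergence in step (a): one must quantify, uniformly in $m\leq N_K$ and over trees, the measure of bad parameters $(\omega_i,t_i,v_{1+i})$ for which the hard-sphere pseudo-trajectory either develops a near-tangent scattering or generates a would-be recollision that is forbidden by our sampling, at a rate that beats the factorial growth $|\cA^\pm_{1,m}|=2^m m!$. This is precisely where the geometric estimates of the appendix (restricted to $d\geq 3$) are used, together with the $\delta$-scale sampling that localises the recollision exclusion. Combined with the clustering control of Proposition~\ref{proposition - conditioning} to transfer from the Gibbs weight to the factorised Maxwellian, this allows us to pass to the limit in each tree and to conclude that the limit of $\int M g_1^{\mathrm{main}}h\,dz$ identifies with the Dyson series of $e^{\theta \cL}g_0$ tested against $h$, yielding the Proposition.
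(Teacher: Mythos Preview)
Your overall decomposition (introduce a Boltzmann counterpart $g_1^{\mathrm{main}}$, compare term by term, then remove the truncation) matches the paper's strategy, but several of the mechanisms you invoke are misplaced, and this creates real gaps.

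First, the principal part is handled entirely by $L^\infty$ estimates, not by the $L^2$ duality of Section~\ref{section - L2} or by Proposition~\ref{proposition - conditioning}. The reason this works is that the Boltzmann hierarchy with data $G^0_n=M^{\otimes n}\sum_i g_0(z_i)$ has an \emph{explicit} solution $G_n(t)=M^{\otimes n}\sum_i g(t,z_i)$ (this is the content of~\cite{BLLS80}), and since $g(t)$ stays bounded in $L^\infty$, the correlation functions $G_{N_K}(\theta-k\tau)$ are uniformly controlled in $L^\infty$ at every intermediate time. The super-exponential tail $G_1^{\mathrm{exp}}$ on the Boltzmann side is then killed by the same crude $L^\infty$ bound $(C\tau)^{n_k}$ used in Lanford's proof, with the gain coming from $n_k>2^k$ on a short interval $\tau$. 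Your appeal to a ``Chebyshev-type estimate for the Poissonian creation process'' and to the $L^2_M$ contraction of $e^{\theta\cL}$ is unnecessary and would not by itself give the needed tree-by-tree summability.

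Second, your treatment of effects (ii)--(iii) is not right. The paper introduces an intermediate Boltzmann--Enskog pseudo-trajectory $\Psi^E$ (creations at distance $\eps$, free transport). On the complement of a bad set $\mathcal O^\eps$ (recollisions or overlaps), $\Psi^\eps=\Psi^E$; and $|\Psi^{E0}-\Psi^0|\leq N_K^{3/2}\eps$ pointwise, which is handled by the Lipschitz norm of $g_0$. The measure of $\mathcal O^\eps$ is bounded by the classical Lanford recollision estimate (the paper cites~\cite{PS17}), yielding $\eps^\alpha(C\theta)^m$ for some $\alpha\in(0,1)$. Your claimed bound $O(\eps/\delta)^{d-1}$ is not the relevant quantity, and more importantly the geometric estimates of Appendix~\ref{geometric estimates} are \emph{not} used here: those are tailored to the remainder $G_1^{\eps,\mathrm{rec}}$, where one must extract extra smallness from a forced recollision, which is a different problem from bounding the set where a recollision \emph{might} occur.

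Third, the comparison of initial data is a direct low-density expansion (Lemma~\ref{lem:ID} in the paper): $|G^{\eps0}_n-G^0_n|\indc_{\cD^\eps_n}\leq C^n M^{\otimes n}\eps\|g_0\|_\infty$. Proposition~\ref{proposition - conditioning} plays no role in this step; it concerns the restriction to $\Upsilon^\eps_\cN$, which is already absent from $G^{\eps,\mathrm{main}}_1$.

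Finally, the summation over trees is closed because, under~\eqref{H2}, $N_K\leq 2^{K+1}$ with $K=\theta/\tau\leq\frac12\log|\log\eps|$, so $(C\theta)^{N_K}$ is dominated by any positive power of $\eps$. This is what absorbs the prefactors $\eps^\alpha$, $\eps$, $N_K^{3/2}\eps$; no appeal to duality or clustering is needed.
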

\noindent The proof of this proposition is the content of Section \ref{section - main part}.

\bigskip
Collecting this together with the decomposition~\eqref{main-decomposition-final} and the propositions of Section~\ref{section - analysis}, Theorem \ref{thmTCL} is proved, provided that the scaling assumptions are compatible. The convergence holds quasi-globally in time, i.e. for any finite $\theta$ and even for very slowly diverging~$\theta = o\big((\log| \log \eps|)^{1/4} \big) $.

We first choose 
$$\delta = \eps ^\eta \,  \hbox{ with }  \,  {d-1\over d} <\eta < 1\,,$$
which ensures that assumption (\ref{existsa}) is satisfied with~$0<a<1$.

Then, we choose $\gamma$ large enough so that
$$\theta  \, \mu_\eps^{\gamma+3} \;\delta  ^{ d \gamma -1} |\log \eps| ^{d \gamma }  = \theta \eps ^{ \gamma (\eta d - (d-1) ) -3(d-1) - \eta }  |\log \eps| ^{d \gamma } \ll 1\,.$$
This will imply that assumption (\ref{H1}) is satisfied, choosing $\bbV =  |\log \eps|$.

It remains to prescribe $\tau$ in order that (\ref{H3}) and  (\ref{H2}) are satisfied. We can take for instance
$$ \tau =  ( \theta^2 \log|\log \eps|) ^{-1/2}\,.$$

\section{Quasi-orthogonality estimates}
\label{section - L2}

To control the remainders associated with super exponential branching $   G^{\eps,{\rm exp}}_{1} (\theta) $ and recollisions $  G^{\eps,{\rm rec}}_{1} (\theta) $, we shall follow the strategy presented in Section~\ref{duality-argument} using a duality argument. More precisely,
in order to use the   $L^2$ estimate on the initial fluctuation field~$\zeta^\eps_0 (g_0)$, we need to establish $L^2$ estimates on the associate test functions~$ \Phi_{N_k}  $, see~(\ref{eq:notaver})-(\ref{CauchySchwarz}).
We prove here a general statement
which will be applied to~ the superexponential case  in   Section~\ref{section - clustering estimates}, and to the case of recollisions in Section~\ref{section - recollisions}.

In the following we denote for~$i<j$
$$
Z_{i, j}:=(z_{i},z_{i+1},\dots z_{ j}) \, .
$$
\begin{Prop} 
\label{prop: Quasi-orthogonality estimates}
Let $ \Phi_N$ be a symmetric function of $N$ variables satisfying 
\begin{eqnarray}
&& \sup_{x_N \in \T^d} \int  | \Phi_N(Z_N) |  M^{\otimes N}(V_{N})
\,dX_{N-1} dV_{N} 
 \leq C^{N }  \rho_0\label{eq: quasi orth 1}\\
&& \sup_{x_{2N-\ell} \in \T^d}\int  | \Phi _N (Z_N) \Phi_N ( Z_\ell, Z_{N+1, 2N - \ell}) |  M^{\otimes {2N - \ell}}(V_{{2N - \ell}})
\,dX_{2N-\ell-1} dV_{{2N - \ell}} \label{eq: quasi orth 2}
 \\
&& \qquad\qquad\qquad
\leq   C^{N }  \; {\mu_\eps^{\ell -1}\over N^\ell }  \;
\rho_\ell \, ,\nonumber \qquad \ell = 1,\dots,N\;,
\end {eqnarray}
%  for any test function $\psi: \T^d \to \R$
for some $C,\rho_0,\rho_\ell >0$.
Define  the centered variable~$\hat \Phi_N$ as in~{\rm(\ref{eq:notaver})}-\eqref{eq:hatPhidef}.
Then there is a constant $\tilde C >0$ such that 
\begin{equation}\label{eq: lem1 moyenne}
 |\bbE_\eps( \Phi_N)|   \leq \tilde C^N \rho_0
\end{equation}
and
\begin{equation}
 \begin{aligned}
\label{L2-PhiN} \bbE_\eps \Big(   \, 
  \mu_\eps 
  \hat\Phi_{N}^2  \; 
  \Big) &   \leq \tilde C^N   \sum_{ \ell = 1}^N  \rho_\ell  + O \left(\tilde C^N \rho_0^2  \eps\right)\, .
   \end{aligned}
\end{equation}
\end{Prop}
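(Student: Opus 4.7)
The plan is to reduce everything to the grand canonical correlation functions
$$
F^\eps_n(Z_n) := \frac{1}{\cZ^\eps} \sum_{p\geq 0} \frac{\mu_\eps^{n+p}}{p!}
\int_{\D^p} \indc_{\cD^\eps_{n+p}}(Z_{n+p})\, M^{\otimes(n+p)}(V_{n+p})\,
dz_{n+1}\cdots dz_{n+p},
$$
which satisfy $\bbE_\eps\big( \sum_{(i_1,\dots,i_n)} f(z_{i_1},\dots,z_{i_n}) \big) = \int f\, F^\eps_n\, dZ_n$. The key a priori input is the dilute-regime bound $F^\eps_n(Z_n) \leq \mu_\eps^n M^{\otimes n}(V_n)$, which follows directly from $\cZ^\eps_n(X_n)/\cZ^\eps \leq 1$ (adding fixed particles only shrinks the accessible volume for the remaining gas). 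Combined with hypothesis \eqref{eq: quasi orth 1} (the $x_N$-integration being trivial on the unit torus), this yields the mean estimate \eqref{eq: lem1 moyenne} at once.

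For the variance, writing $T_N := \sum_{(i_1,\dots,i_N)} \Phi_N(z_{i_1},\dots,z_{i_N})$, I would expand $\bbE_\eps(T_N^2)$ by sorting the ordered pairs of index tuples according to the overlap $\ell := |\{i_k\}\cap\{j_k\}|$. By symmetry of $\Phi_N$ and of $F^\eps_n$, one obtains
$$
\bbE_\eps(T_N^2) = \sum_{\ell=0}^N \binom{N}{\ell}^2 \ell!\,
\int \Phi_N(Z_N)\, \Phi_N(Z_\ell, Z_{N+1, 2N-\ell})\,
F^\eps_{2N-\ell}(Z_{2N-\ell})\, dZ_{2N-\ell},
$$
where $\binom{N}{\ell}^2 \ell!$ counts the orderings of the two tuples compatible with a given shared set. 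The $\ell=0$ term has combinatorial weight $1$, matching exactly $\bbE_\eps(T_N)^2 = \int \Phi_N(Z_N)\Phi_N(Z'_N) F^\eps_N(Z_N) F^\eps_N(Z'_N)\, dZ_N dZ'_N$, so the variance splits cleanly as
$$
\mu_\eps^{2N-1}\, \bbE_\eps(\hat\Phi_N^2)
= \int \Phi_N\Phi_N\, \frac{F^\eps_{2N} - F^\eps_N F^\eps_N}{\mu_\eps^{2N-1}}\,
dZ_{2N} + \sum_{\ell=1}^N \binom{N}{\ell}^2 \ell!\,
\int \Phi_N\Phi_N\, \frac{F^\eps_{2N-\ell}}{\mu_\eps^{2N-1}}\,
dZ_{2N-\ell}.
$$
In the $\ell \geq 1$ sum, the correlation-function bound supplies a factor $\mu_\eps^{2N-\ell}$ and hypothesis \eqref{eq: quasi orth 2} supplies $\mu_\eps^{\ell-1}/N^\ell$, so the $\mu_\eps$-powers cancel exactly and each summand is controlled by $\binom{N}{\ell}^2 \ell!\, N^{-\ell}\, \tilde C^N \rho_\ell$; the elementary inequality $\binom{N}{\ell}^2 \ell!/N^\ell \leq N^\ell/\ell!$ absorbs the combinatorial factors, delivering a total contribution $\tilde C^N \sum_{\ell \geq 1} \rho_\ell$.

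The main obstacle is the $\ell = 0$ remainder $\mu_\eps^{1-2N} \int \Phi_N\Phi_N\,[F^\eps_{2N} - F^\eps_N F^\eps_N]\, dZ_{2N}$. Individually $F^\eps_{2N}/\mu_\eps^{2N}$ and $F^\eps_N F^\eps_N/\mu_\eps^{2N}$ are both of order $1$, so each term alone would produce a contribution of size $O(\mu_\eps\, \rho_0^2)$; the required $O(\eps)$ smallness can come only from their cancellation. To exhibit this I plan to use a Mayer-type cluster expansion of the partition function ratio $\cZ^\eps_{2N}(X_{2N})\, \cZ^\eps/\big(\cZ^\eps_N(X_N)\, \cZ^\eps_N(X'_N)\big)$ appearing in $F^\eps_{2N}/(F^\eps_N F^\eps_N)$, together with the expansion of the inter-group exclusion $\prod_{i,j} \indc_{|x_i - x'_j|>\eps} = \prod_{i,j}(1 - \indc_{|x_i - x'_j|\leq\eps})$, and identify the connected subclusters spanning the two groups $X_N$ and $X'_N$. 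The simplest such cluster (a single spectator particle mediating exclusion between the two groups) contributes $O(\mu_\eps \eps^d) = O(\eps)$ per pair after integration against $M^{\otimes 2N}$, and higher-order clusters are smaller powers of $\eps$. Uniform absolute convergence of this Mayer series in $N$, combined with a Cauchy--Schwarz applied to each factor $\Phi_N$ using \eqref{eq: quasi orth 1}, should yield the stated remainder $O(\tilde C^N \rho_0^2 \eps)$. The combinatorial bookkeeping of clusters set against the $\eps$-smallness of each bond is the real technical heart of the argument.
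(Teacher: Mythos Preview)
Your approach is essentially the same as the paper's, with one genuine simplification. For the mean bound \eqref{eq: lem1 moyenne} and for the $\ell \geq 1$ contributions to the variance, you bypass the cluster expansion entirely by invoking the pointwise inequality $F^\eps_n \leq \mu_\eps^n M^{\otimes n}$ (equivalently $\cZ^\eps(X_n)/\cZ^\eps \leq 1$) and then applying hypotheses \eqref{eq: quasi orth 1}--\eqref{eq: quasi orth 2} directly. The paper instead carries out a full block cluster expansion of the exclusion in each of these cases and sums via the Penrose tree inequality; your route is shorter and equally valid. For the $\ell = 0$ remainder, where cancellation between $F^\eps_{2N}$ and $F^\eps_N\otimes F^\eps_N$ is essential, your plan---Mayer expansion, isolation of the connected subclusters linking the two blocks $Z_N$ and $Z'_N$---is exactly what the paper does, though the paper phrases it as a two-block expansion of the indicator $\indc_{\cD^\eps_{2N+p}}$ rather than of the correlation functions. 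The Penrose tree inequality \eqref{eq:treeineq} together with the counting formula \eqref{eq: tree number} is what makes your ``uniform absolute convergence in $N$'' precise.

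One imprecision to flag: you write ``Cauchy--Schwarz applied to each factor $\Phi_N$ using \eqref{eq: quasi orth 1}'', but \eqref{eq: quasi orth 1} is an $L^1$-type bound, not $L^2$, so Cauchy--Schwarz is not the right mechanism. What actually happens (and what the paper does) is that once a spanning tree is fixed, the tree edge constraints localize one position in each of the two blocks; the remaining integral over $X_{N-1},V_N$ and $X'_{N-1},V'_N$ then \emph{factorizes}, and \eqref{eq: quasi orth 1} is applied directly to each factor, with the supremum taken over the position pinned by the tree. No Cauchy--Schwarz is needed or helpful here.
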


Properties (\ref{eq: quasi orth 1}) and (\ref{eq: quasi orth 2})  will come from the fact that $\Phi_{N_k} $ is a sum of elementary functions supported on dynamical clusters, which can be represented by minimally connected graphs with~$N_k$ vertices, where each edge has a cost in $L^1$ of the order of $O(1/\mu_\eps)$. In order to compute the $L^1$ norm of tensor products, we will then extract minimally connected graphs from the union of two such trees, which provides  independent variables of integration. Additional smallness (encoded in the constants $\rho_0, \rho_\ell$) will come from the conditions that  collisions/recollisions are localized in a small time interval,  in Sections~\ref{section - clustering estimates} and~\ref{section - recollisions}.

\begin{proof}We start by computing the expectation
 \begin{equation}
 \begin{aligned}
 \label{eq:mediaprova1}
  \bbE_\eps( \Phi_N)&=  {1\over \mu_\eps^N} 
\bbE_\eps \Big( \sum _{(i_1,\dots, i_{N} )}  \Phi_N \big({\bf z}^\eps_{i_1}, \dots, {\bf z}^\eps_{i_{N} }\big) \Big)
   \\
   &= \frac{1}{\cZ^\eps}
  \sum_{p\geq 0 } \int dZ_{N+p}  \frac{\mu_\eps^{p}}{p!} \indc_{\cD^\eps_{N+p}} (Z_{N+p}) M^{\otimes (N+p) } (V_{N+p} )  \Phi_N(Z_{N})\;.
  \end{aligned}
\end{equation}
This expression will be  estimated by expanding the exclusion condition on $Z_{N+p}  = ( Z_N,  \bar Z_p) $  using classical cluster techniques. We will consider $Z_N$ as a block represented by one vertex, and $(\bar z_i)_{1\leq i \leq p}$ as $p$ separate vertices. We denote by $d(y,y^*)$ the minimum relative distance (in position) between elements $y,y^*\in \{Z_N, \bar z_1, \dots \bar z_{p} \}$. We then have
$$
\begin{aligned}
\indc_{\cD^\eps_{N+p} } (Z_{N+p} ) &=  \indc_{\cD^\eps_N} (Z_N)    \prod_{ y,y^* \in \{Z_N, \bar z_1, \dots \bar z_{p} \}\atop y\neq y^*} \indc_{ d(y,y^*) >\eps} \\
&= \indc_{\cD^\eps_N} (Z_N)   \sum_{\sigma_0 \subset \{1,\dots ,p\}} \indc_{\cD^\eps_{|\sigma_0|} }(\bar Z_{\sigma_0} ) \,  \varphi ( Z_N,\bar Z_{\sigma_0^c} ) 
\end{aligned}
$$
where $\sigma_0$ is a (possibly empty) part of $\{1,\dots, p\} $, $\sigma_0^c$ is its complement, and where  the cumulants $\varphi$ are defined as follows
\begin{equation}
\label{cumulant-def}
\begin{aligned}
 \varphi ( Z_N, \bar Z_{\sigma} ): = \sum_{G \in \cC_{1+|\sigma|}} \prod _{(y,y^*) \in E(G)} ( - \indc_{ d(y,y^*) \leq \eps})\,,
\end{aligned}
\end{equation}
denoting by $\cC_n$ the set of connected graphs with $n$ vertices, and by $E(G)$ the set of edges of such a graph $G$.
By exchangeability of the background particles, we therefore obtain 
 \begin{equation}
 \begin{aligned} \label{eq:calcolomedia}
 \bbE_\eps( \Phi_N)
  &= \frac{1}{\cZ^\eps} \left(  \sum_{p_0 \geq 0} {\mu_\eps ^{p_0} \over p_0!}  \int M^{\otimes p_0} \indc_{\cD^\eps_{p_0}} (\bar Z_{p_0} ) d\bar Z_{p_0}\right)  \\
&\quad \times  \sum_{ p_1 \geq 0} {\mu_\eps ^{p_1} \over p_1 !}  \int M^{\otimes (N + p_1)} \varphi ( Z_N,  \bar Z_{p_1})   \indc_{\cD^\eps_N} (Z_N)   \Phi_N(Z_{N}) dZ_N d\bar Z_{p_1}\\
& =  \sum_{ p_1 \geq 0} {\mu_\eps ^{p_1} \over p_1 !}  \int M^{\otimes (N + p_1)} \varphi ( Z_N,  \bar Z_{p_1})   \indc_{\cD^\eps_N} (Z_N)   \Phi_N(Z_{N}) dZ_N d\bar Z_{p_1}\;,
  \end{aligned}
\end{equation}
where in the last step we used the definition of the grand canonical partition function $\cZ^\eps$.

A powerful tool to sum cluster expansions of  exclusion processes is the  tree inequality due to Penrose (\cite{Pe67}, see also \cite{ gibbspp}) estimating sums over connected graphs in terms of sums over minimally connected graphs. It states that the cumulants defined by (\ref{cumulant-def}) satisfy
 \begin{equation}
 \label{eq:treeineq}
 \left| \varphi ( Z_N,  \bar Z_{p_1})  \right| \leq \sum _{T\in \cT _{1+ p_1} } \prod_{(y,y^*) \in E(T)}  \indc_{ d(y,y^*) \leq \eps}\,,
 \end{equation}
 where $\cT_{1+p_1}$ is the set of minimally connected graphs with $1+p_1$ vertices.
 
 The product of indicator functions in \eqref{eq:treeineq} is a sequence of~$p_1$ constraints, confining the space coordinates to  balls of size $\eps$ centered at the positions   $X_N, \bar x_1, \dots \bar x_{p_1}$.  We rewrite it  as a constraint on  the positions   $x_N,  \bar x_1, \dots \bar x_{p_1}$ (recalling that $X_N$ is considered as a  block, meaning that the relative positions inside it are fixed).
  Integrating the indicator function with respect to $\bar X_{p_1}$ provides a factor
$N^{d_1} \eps^{d p_1 } $ where $d_1$ is the degree of the vertex $X_N$  in $T$. Then, using \eqref{eq: quasi orth 1} to  integrate with respect to $X_{N-1}, V_N$ provides a factor $C^N \rho_0$.

The number of minimally connected graphs with 
specified vertex degrees~$d_1,\dots,d_{1+p_1}$ is given by 
\begin{equation}
\label{eq: tree number}
{(p_1-1)!} / {\displaystyle \prod_{i=1}^{1+p_1} (d_i-1)!} \,. 
\end{equation}
Therefore, combining \eqref{eq:calcolomedia} and (\ref{eq:treeineq}), we conclude that there exists $C'>0$ such that
\begin{equation}
\label{est-2N}
 \begin{aligned}
 |\bbE_\eps( \Phi_N)| \leq  C^N \rho_0  \sum_{p_1 \geq 0}\left[ (C' \eps^d \mu_\eps )^{p_1}  \sum_{d_1,\dots,d_{p_1+1} \geq 1} \frac{N^{d_1} }{\prod_{i=1}^{p_1+1}(d_i-1)!} \right]\;,
\end{aligned}
\end{equation}
from which \eqref{eq: lem1 moyenne} follows by taking $\e$ small enough and using the fact that
$$
  \sum_{d_1,\dots,d_{p_1+1} \geq 1} \frac{N^{d_1} }{\prod_{i=1}^{p_1+1}(d_i-1)!} =
  \sum_{d_1}\frac{N^{d_1} }{(d_1-1)!}  \sum_{d_2}\frac{1}{(d_2-1)!} \dots   \sum_{d_{p_1+1}}\frac{1 }{(d_{p_1+1}-1)!} \leq Ne^N e^{p_1}\, .
$$

\bigskip
 
In order to establish (\ref{L2-PhiN}), we note that
\begin{equation}
\begin{aligned} \label{estimatePhi-EPhi}
 \bbE_\eps \Big(   \, 
  \mu_\eps 
  \hat\Phi_{N}^2  \; 
  \Big) 
  =   \frac1{ \mu_\eps 
^{2N-1}}  \bbE_\eps  \Big(  \sum _{(i_1,\dots, i_{N} )}  \Phi_N \big({\bf z}^{\eps}_{i_1}, \dots, {\bf z}^{\eps}_{i_{N} }) \Big)^2- \mu_\eps 
\big( \bbE_\eps (\Phi_{N}) \big)^2
\end{aligned}
\end{equation}
  and
 first expand the square
% square according to the number $\ell$ of common indices~:
$$\begin{aligned}
   &\bbE_\eps\left( \Big(  \sum _{(i_1,\dots, i_{N} )} \Phi_N \big({\bf z}^\eps_{i_1}, \dots, {\bf z}^\eps_{i_{N} }\big) \Big) ^2 \right) \\ & = \bbE_\eps\left( \sum _{(i_1,\dots, i_{N} )}\Phi_N \big({\bf z}^\eps_{i_1}, \dots, {\bf z}^\eps_{i_{N} }\big) \sum _{(i'_1,\dots, i'_{N} )}\Phi_N \big({\bf z}^\eps_{i'_1}, \dots, {\bf z}^\eps_{i'_{N} }\big)\right)\, .
%   \\
%    &=  \sum_{\ell = 0} ^N \sum_{A \subset \{1,\dots N\} \atop |A| = \ell}\sum_{A' \subset \{1,\dots N\} \atop |A'| = \ell}    \bbE_\eps \Big( \sum _{(i_k )_{k\notin A}}\sum _{(i'_k )_{k\notin A' }}\sum_{(i_k)_{k \in A},  (i'_k )_{k\in A' }\atop \{ i_k\,/\, k \in A\} = \{ i'_k\,/\, k \in A'\}} 
%   \Phi_N\big({\bf z}^\eps_{i_1}, \dots, {\bf z}^\eps_{i_{N} }\big) \Phi_N \big({\bf z}^\eps_{i'_1}, \dots, {\bf z}^\eps_{i'_{N} }\big)\Big) \\
     \end{aligned}
$$
%where the  set of indices appearing in the three sums have to be  disjoint. 
Notice that we have two configurations of (different) particles labelled by $(i_1,\cdots,i_N)$ and $(i'_1,\dots,i'_N)$, with a certain number $\ell $ of particles in common, $\ell = 0,1,\dots,N$. Using the symmetry of the function $\Phi_N$, we can choose $i_1=i'_1,i_2=i'_2,\dots, i_\ell=i'_\ell$ as the common indices and we find that
\begin{equation}
\label{square}
\begin{aligned}
   &\bbE_\eps\left( \Big(  \sum _{(i_1,\dots, i_{N} )}  \Phi_N \big({\bf z}^\eps_{i_1}, \dots, {\bf z}^\eps_{i_{N} }\big) \Big) ^2 \right) \\
    & =  \sum_{\ell = 0} ^N {\binom{N}{\ell} }^2   \ell! \quad \bbE_\eps \Big( \sum _{(i_k )_{k\in \{1,\dots 2N-\ell\} } }
   \Phi_N\big({\bf z}^\eps_{i_1}, \dots, {\bf z}^\eps_{i_N}\big) \Phi_N \big({\bf z}^\eps_{i_1}, \dots, {\bf z}^\eps_{i_\ell}, {\bf z}^\eps_{i_{N+1} },\dots, {\bf z}^\eps_{i_{2N-\ell} }\big)\Big) \;,
  \end{aligned}
 \end{equation}
  where the combinatorial factor   $  {\binom{N}{\ell} }^2$ comes from 
all possible choices for sets~$A$ and~$A'$ in~$ \{1,\dots N\}$,   with $|A|=|A'|=\ell$,   corresponding to the positions of the common indices    in both~$N$-uplets. The factor~$\ell!$ is due to    all possible bijections between $A$ and $A'$, corresponding to the permutations of the repeated indices.

  \bigskip
Next we treat separately the cases $\ell=0$ and $\ell \neq 0$.  
  
\bigskip

\noindent
{\it Step 1. The case when  all indices are different $\ell = 0$. }  Let us compute
\begin{equation}
\label{terme2N}
 \begin{aligned}
& \frac1{\mu_\eps ^{ 2N-1}}
   \bbE_\eps \Big( \sum _{(i_1,\dots, i_{2N} )}  \Phi_N \big({\bf z}^\eps_{i_1}, \dots, {\bf z}^\eps_{i_{N} }\big) \Phi_N \big({\bf z}^\eps_{i_{N+1}}, \dots, {\bf z}^\eps_{i_{2N} } \big) \Big)\\
  & = \frac{\mu_\eps}{\cZ^\eps}
  \sum_{p\geq 0 } \int dZ_{2N+p}  \frac{\mu_\eps^{p}}{p!} \indc_{\cD^\eps_{2N+p}} (Z_{2N+p}) M^{\otimes (2N+p) } (V_{2N+p} )  \Phi_N(Z_{N})\Phi_N(Z_{N+1, 2N} )
  \, .
  \end{aligned}
\end{equation}
 \begin{figure}[h]\centering
\includegraphics[width=4in]{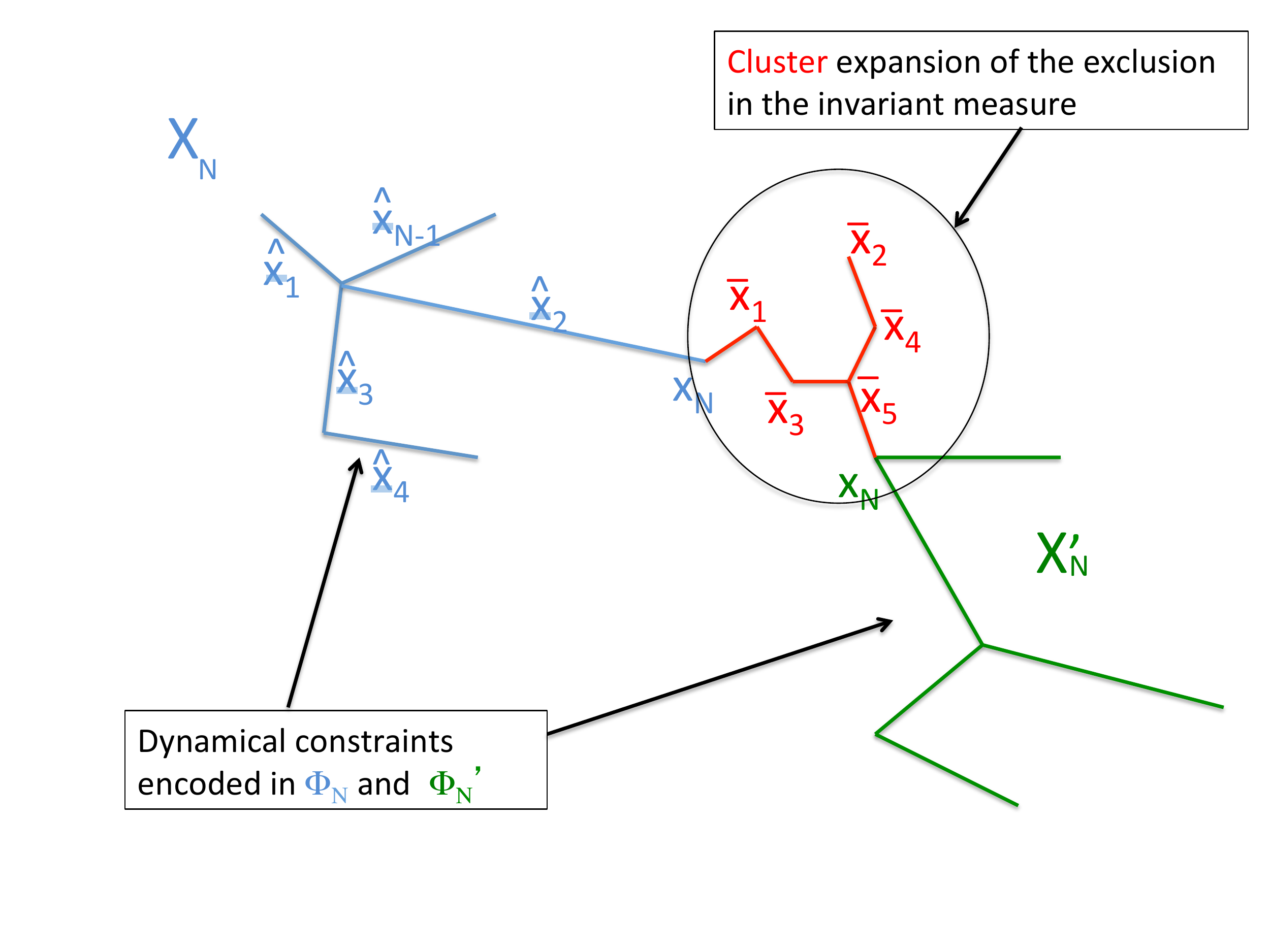} 
\caption{\small 
Cluster expansion of the exclusion and separation of integration variables when $Z_N$ and $Z'_N$ are disjoint.}
 \label{fig-terme2N}
\end{figure}
We can proceed as in the proof of \eqref{eq: lem1 moyenne} by expanding the exclusion condition on $Z_{2N+p}  = ( Z_N, Z'_N, \bar Z_p) $  (see the red part in Figure~\ref{fig-terme2N}) and considering $Z_N$ and $Z'_N$ as blocks represented each by one vertex. We then have
$$
\begin{aligned}
\indc_{\cD^\eps_{2N+p} } (Z_{2N+p} ) 
&= \indc_{\cD^\eps_N} (Z_N)  \indc_{\cD^\eps_N} (Z'_N)   \sum_{\sigma_0 \subset \{1,\dots ,p\}} \indc_{\cD^\eps_{|\sigma_0|} }(\bar Z_{\sigma_0} )  \Big[ \varphi ( Z_N, Z_N', \bar Z_{\sigma_0^c} ) \\
& \qquad \qquad \qquad \qquad \qquad\qquad \qquad \qquad  + \sum_{\sigma\cup \sigma' = \sigma_0^c \atop \sigma \cap \sigma ' = \emptyset} \varphi(Z_N, \bar Z_{\sigma}) \varphi ( Z'_N, \bar Z_{\sigma'}) \Big] 
\end{aligned}
$$
where $\sigma_0, \sigma, \sigma'$ are (possibly empty) parts of $\{1,\dots, p\} $, and where we use \eqref{cumulant-def} and 
$$
\varphi ( Z_N, Z'_N, \bar Z_{\sigma} ) := \sum_{G \in \cC_{2+|\sigma|}} \prod _{(y,y^*) \in E(G)} ( - \indc_{ d(y,y^*) \leq \eps})\;.
$$
By exchangeability of the background particles, we therefore obtain (as in \eqref{eq:calcolomedia})
 \begin{equation}
\label{terme2N-exclusion}
\begin{aligned}
&\frac{\mu_\eps}{\cZ^\eps}\sum_{p\geq 0} {\mu_\eps ^{p}  \over p!}  \int M^{\otimes (2N + p)} \indc_{\cD^\eps_{2N+p}} (Z_N, Z'_N, \bar Z_p ) \Phi_N(Z_{N})\Phi_N(Z'_N) dZ_N dZ'_N  d \bar Z_p\\
  &=\sum_{ p_1 \geq 0} {\mu_\eps ^{p_1+1} \over p_1 !}  \int M^{\otimes (2N + p_1)} \varphi ( Z_N, Z'_N , \bar Z_{p_1})   \indc_{\cD^\eps_N} (Z_N)   \indc_{\cD^\eps_N} (Z'_N) \Phi_N(Z_{N})\Phi_N(Z'_N) dZ_N dZ'_N d\bar Z_{p_1} \\
& \quad  + \mu_\eps \Big( \sum_{p_1 \geq 0} {\mu_\eps ^{p_1} \over p_1 !}  \int M^{\otimes (N + p_1)} \varphi ( Z_N,   \bar Z_{p_1})   \indc_{\cD^\eps_N} (Z_N)  \Phi_N(Z_{N}) dZ_N d\bar Z_{p_1} \Big) ^2 \,.
\end{aligned}
\end{equation}
The last term is equal to  $\mu_\eps \left(\bbE_\eps( \Phi_N)\right)^2$ by  \eqref{eq:calcolomedia}, therefore it cancels out in the computation of~\eqref{estimatePhi-EPhi}.

The second line in \eqref{terme2N-exclusion} is treated as before.
By the tree inequality
$$
 \left| \varphi ( Z_N, Z'_N , \bar Z_{p_1})  \right| \leq \sum _{T\in \cT _{2+ p_1} } \prod_{(y,y^*) \in E(T)}  \indc_{ d(y,y^*) \leq \eps}\;,
 $$
we reduce to~$p_1+1$ constraints confining the space coordinates to  balls of size $\eps$ centered at the positions   $X_N, X'_N, \bar x_1, \dots \bar x_{p_1}$, which we can rewrite as a constraint on  the positions~$x_N, x'_N, \bar x_1, \dots \bar x_{p_1}$ (recalling that $X_N$ and $X'_N$ are considered as blocks, meaning that the relative positions inside each one of these blocks are fixed).
  Integrating the indicator function with respect to $\bar X_{p_1}, x_N, x'_N$ provides a factor
$N^{d_1+d_2} \eps^{d( p_1+1) } $ where $d_1$ and $d_2$ are the degrees of the vertices $X_N$ and $X'_N$ in $T$. Then, using \eqref{eq: quasi orth 1} to  integrate with respect to~$X_{N-1},  X_{N-1}', V_N, V'_N$ provides a factor $\left(C^N \rho_0\right)^2 $.
We conclude that the second line in~\eqref{terme2N-exclusion} is bounded by
\begin{equation}
\label{est-2N'}
 \begin{aligned}
\left(C^N \rho_0\right)^2  \sum_{p_1 \geq 0}\left[ (C' \eps^d \mu_\eps )^{p_1+1}  \sum_{d_1,\dots,d_{p_1+2} \geq 1} \frac{N^{d_1 + d_2} }{\prod_{i=1}^{p_1+2}(d_i-1)!} \right] = O(\tilde C^N \rho_0^2 \eps) 
\end{aligned}
\end{equation}
and it follows that
\begin{equation}
\label{est-2N''}
 \begin{aligned}
 \frac1{\mu_\eps ^{ 2N-1}}
   \bbE_\eps \Big( \sum _{(i_1,\dots, i_{2N} )}  \Phi_N \big({\bf z}^\eps_{i_1}, \dots, {\bf z}^\eps_{i_{N} }\big) \Phi_N \big({\bf z}^\eps_{i_{N+1}}, \dots, {\bf z}^\eps_{i_{2N} } \big) \Big) \\= 
\mu_\eps \left(\bbE_\eps( \Phi_N)\right)^2   + 
O(\tilde C^N \rho_0^2 \eps)  \;.
\end{aligned}
\end{equation}

\bigskip

\noindent
{\it Step 2. The case  when some indices are repeated. } For~$\ell\in [ 1,N]$ given, we   consider
$$
 \begin{aligned}
&\frac1{\mu_\eps ^{ 2N-1}}
   \bbE_\eps \Big( \sum _{(i_k )_{k\in \{1,\dots 2N-\ell\} } }
   \Phi_N\big({\bf z}^\eps_{i_1}, \dots, {\bf z}^\eps_{i_{N} }\big) \Phi_N \big({\bf z}^\eps_{i_1}, \dots, {\bf z}^\eps_{i_\ell}, {\bf z}^\eps_{i_{N+1} },\dots, {\bf z}^\eps_{i_{2N-\ell} }\big)\Big)  \\
   &=  \frac{\mu_\eps^{1-\ell}}{\cZ^\eps} \sum_{p \geq 0}\frac{\mu_\eps^p}{ p!}  \int  dZ_{2N+p-\ell} 
    \indc_{\cD^\eps_{2N+p-\ell}} (Z_{2N+p-\ell})  M^{\otimes (2N+p- \ell )} (V_{2N+p-\ell })   \Phi_N ( Z_N)  \Phi_N (Z'_N) 
  \end{aligned}
$$
denoting  $Z_N = (Z_\ell, Z_{\ell+1, N})$, $Z'_N = (Z_{\ell}, Z_{N+1, 2N-\ell})$ and $\bar Z_p = Z_{2N-\ell+1, 2N-\ell+p}$.

 \begin{figure}[h] 
\centering
\includegraphics[width=4in]{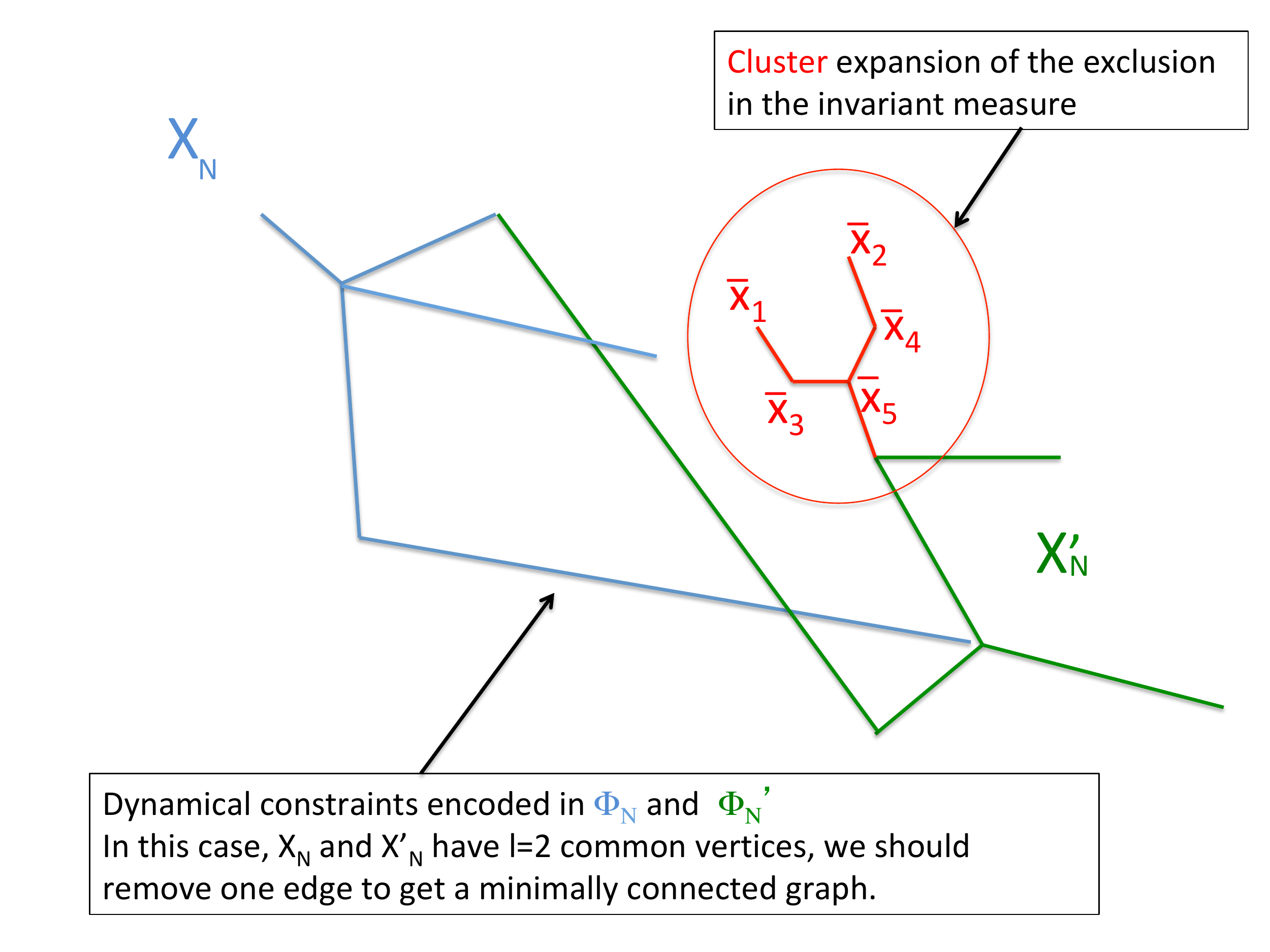} \label{Fig-croise}
\caption{\small 
Cluster expansion of the exclusion and separation of integration variables when $Z_N$ and $Z'_N$ have $\ell$ common elements.}
\end{figure}

 This expression is of the same form as~\eqref{eq:mediaprova1}, but now $\Phi_N(Z_N)$ is replaced by $\Phi_N ( Z_N)  \Phi_N (Z'_N) $ which is a function of $2N-\ell$ particle variables. It can be therefore estimated in exactly the same way (by considering $Z_{2N-\ell}$ as one block since the dynamical constraints will  provide a cluster structure on $Z_{2N-\ell}$: see the red part in Figure \ref{Fig-croise}). The role of the cluster estimate~\eqref{eq: quasi orth 1} is now played by \eqref{eq: quasi orth 2} and this leads to (see \eqref{est-2N})
 \begin{equation}
\label{est-croise}
 \begin{aligned}
&\frac1{\mu_\eps ^{ 2N-1}}
   \bbE_\eps \Big( \sum _{(i_k )_{k\in \{1,\dots 2N-\ell\} } }
  \Phi_N\big({\bf z}^\eps_{i_1}, \dots, {\bf z}^\eps_{i_{N} }\big) \Phi_N \big({\bf z}^\eps_{i_1}, \dots, {\bf z}^\eps_{i_\ell}, {\bf z}^\eps_{i_{N+1} },\dots, {\bf z}^\eps_{i_{2N-\ell} }\big)\Big) \\
&
\qquad \leq  C^N {\rho_\ell \over N^\ell} \sum_{p_1 \geq 0}\left[ (C' \eps^d \mu_\eps )^{p_1}  \sum_{d_1,\dots,d_{p_1+1} \geq 1} \frac{(2N-\ell) ^{d_1 } }{\prod_{i=1}^{p_1+1}(d_i-1)!} \right]\leq \tilde C^{N} \rho_\ell  N^{-\ell} \;.
\end{aligned}
\end{equation}
Combining \eqref{estimatePhi-EPhi}, (\ref{square}), (\ref{est-2N''}) and (\ref{est-croise})
we conclude that
\begin{equation}
 \begin{aligned}
\bbE_\eps \Big(   \, 
  \mu_\eps 
  \hat\Phi_{N}^2  \; 
  \Big) \leq \mu_\eps \left(\bbE_\eps( \Phi_N)\right)^2   + 
O(\tilde C^N \rho_0^2 \eps)
+  \sum_{\ell = 0} ^N {\binom{N}{\ell} }^2   \ell!\, \tilde C^{N} \rho_\ell  N^{-\ell} - \mu_\eps \left(\bbE_\eps( \Phi_N)\right)^2
\end{aligned}
\end{equation}
and, remarking that $\ell ! \leq N^\ell $, this leads to \eqref{L2-PhiN} by enlarging the constant $\tilde C$.
\end{proof}
 \begin{Rmk} \label{rem:CF}
For $N=1$ and $\Phi_1 = h \in L^2_M$, one has $\bbE_\eps \Big(   \, 
  \mu_\eps 
  \hat\Phi_{1}^2  
  \Big) =  \bbE_\eps \big (\zeta^\eps (h)^2 \big)$. A simple corollary of the above proof
  leads then to \eqref{eq: moment ordre 2}.
\end{Rmk}

%%%%%%%%%%%%%%%%%%%%%%%%%%%%%%%%%%%%%%%%%%%%%%%%
%%%%%%%%%%%%%%%%%%%%%%%%%%%%%%%%%%%%%%%%%%%%%%%%

\section{Clustering estimates}
\label{section - clustering estimates}

\setcounter{equation}{0}

In this section we will prove Proposition~\ref{proposition - superexponential trees}. 
We consider
\begin{equation}
\label{eq:Gexp}
 \int  G^{\eps,{\rm exp}} _1 (\theta )\, h(z)\, dz= \sum_{k=1}^K \sum _{( n_j \leq 2^j)_{j\leq k-1} }
 \sum _{  n_k \geq 2^k } \int dz\, h(z)\, Q^{\eps0} _{1, n_1} (\tau) \dots Q^{\eps0}_{N_{K-1}, N_K} (\tau) \,\widetilde G^{\eps}_{N_K}  (\theta - k\tau)  \, .
\end{equation}
Each term of the sum will be estimated by using  Proposition \ref{prop: Quasi-orthogonality estimates}. 
With the notation $t_{{\rm stop}}:=\theta -k\tau$, we set
\begin{equation}
\label{eq: def I nk}
I_{\n_k}  := \int  h(z_1)
Q^{\eps0} _{1, n_1} (\tau) \dots Q^{\eps0}_{N_{k-1}, N_k} (\tau) \widetilde G^\eps_{N_k} (t_{{\rm stop}})dz_1
\end{equation}
where~$1 \leq k \leq K$ is fixed, as well as the set~$\n_k = (n_j)_{1 \leq j \leq k}$  of integers.  
Given a collision tree $a \in \cA^\pm_{1,N_k-1}$, we will use, as explained in \eqref{change of variables},  the injectivity of the  change of variables
\begin{equation}
\label{change of variables 2}
\big(z_1, ( t_i,\omega_i,v_{1+i})_{1 \leq i \leq N_k - 1}\big)  \longmapsto Z^\e_{N_k}(0) \in \, \cR_{a, \n_k}  \, ,
\end{equation}
where the configurations in $\cR_{a,{\mathbf n}_k} $ have to be  compatible with 
 pseudo-trajectories satisfying the following constraints : 
 \begin{itemize}
 \item[(i)] there are $n_j$ particles added on the time intervals $(\theta- j \tau, \theta- (j-1) \tau)$
 for $j \leq k$;
 \item[(ii)]  the addition of new particles is prescribed by the collision tree $a$;
 \item[(iii)] the pseudo-trajectory involves no recollision on $(t_{{\rm stop}}, \theta)$.
% \item[(iv)] the total energy at $t_{{\rm stop}} $ is less than $\bbV$.
 \end{itemize}
We can thus write
$$
I_{\n_k} = \int    \Phi_{N_k} (Z_{N_k}) \, \widetilde G^{\eps}_{N_k} ( t_{{\rm stop}} , Z_{N_k} )  \,dZ_{N_k} \, ,
$$
with 
\begin{equation}
\label{defPhiN exp}
\Phi_{N_k}  (Z_{N_k}  ):= \frac  { \mu_\eps^{N_k-1}   }{ N_k !}   \sum_{\sigma \in {\mathfrak S}_{N_k} }  \sum_{a \in \cA^\pm_{1,N_k-1}}
h\big(z^\e_{\sigma (1)}(\theta,Z_\sigma)\big) \indc_{\{ Z_\sigma \in \cR_{a,{\mathbf n}_k}  \}    }\prod_{i=1}^{N_k - 1}  s_i \, .
\end{equation}
% Since $\widetilde G^{\eps}_{N_k} (t_{{\rm stop}}  ) $ is the correlation function corresponding to the conditioned initial data, we know that on its support there cannot be clusters of more than $\gamma$ particles. Therefore we add the condition on $\cR_a^{{\rm exp}}$ that the pseudo-trajectories should also satisfy 
% \begin{itemize}
% \item[(v)] the configuration at time $t_{{\rm stop}}$ has no cluster of more than $\gamma$ particles.
% \end{itemize}
% We stress the fact that the function  $\Phi^{{\rm exp}}_{m+1}$ depends on the time $t_{{\rm stop}}$ as it is coded by pseudo-trajectories in the time interval $[t_{{\rm stop}}, \theta]$.
 Using same the notation as \eqref{eq:hatPhidef}, we set
\begin{equation} 
\label{eq:hatPhidef 2}
\hat \Phi_{N}\left({\mathbf Z}^\eps_{N}\right):= \frac{1}{\mu_\eps^{N}}\sum_{(i_1, \dots i_{N} )  } \Phi_N \big( {\bf z}^{\eps}_{i_1},\dots,{\bf z}^{\eps}_{i_{N}} \big)  - \bbE_\eps( \Phi_N )\;,
\end{equation}
so that $I_{\n_k}$ becomes
\begin{align}
\label{eq:hatPhidef 2 bis}
I_{\n_k}   = \bbE_\eps\Big( \mu_\eps^{1/2} \; \hat\Phi_{N_k} \big({\mathbf Z}^\eps_{{N_k}} (t_{{\rm stop}}) \big) \;
  \zeta^\eps_0(  g_0) \; \indc_{  \Upsilon_\cN^\eps}  \Big)
 +  \mu_\eps^{1/2} \;  \bbE_\eps\left(\Phi_{N_k}\right) 
 \bbE_\eps\left( \zeta^\eps_0(  g_0)  \indc_{  \Upsilon_\cN^\eps}  \right)\, ,
\end{align}
where the indicator function on $\Upsilon_\cN^\eps$ stands for the restriction on the cluster sizes \eqref{eq: densities at t tilde}.
Applying the Cauchy-Schwarz inequality, as in \eqref{CauchySchwarz}, leads to the following upper bound
\begin{equation}\begin{aligned}
\label{eq: inegalite I exp}
|I_{\n_k} | & \leq 
\bbE_\eps \Big( (\zeta^\eps_0 (g_0)) ^2 \Big)^{1/2}
   \;    \bbE_\eps \Big(   \, 
  \mu_\eps \Big(  \hat\Phi_{N_k} \big( {\mathbf Z}^\eps_{{N_k}} (t_{{\rm stop}})  \big) \Big)^2  \; 
  \Big)^{1/2} \\
& \quad + \mu_\eps^{1/2} \;  |\bbE_\eps\left(\Phi_{N_k}\right) 
 \bbE_\eps\left( \zeta^\eps_0(  g_0)  \indc_{  \Upsilon_\cN^\eps}  \right) |
%& \qquad +   \frac{\bbE_\eps\left(\Phi_{N_k}\right)}{\mu_\eps ^{ N_k -\frac12} }
% \bbE_\eps\left(  \frac{ \cN !} {(\cN- N_k)!}  \zeta^\eps_0(  g_0)  \indc_{  \Upsilon_\cN^\eps}  \right)
\end{aligned}
\end{equation}
which can be estimated by Proposition \ref{prop: Quasi-orthogonality estimates}.
To do this, we are going to check, in Lemmas~\ref{lem: hyp quasi orth 1} and \ref{lem: hyp quasi orth 2} stated below, that  $\Phi_{N_k} $ satisfies the assumptions \eqref{eq: quasi orth 1} and \eqref{eq: quasi orth 2} of Proposition~\ref{prop: Quasi-orthogonality estimates}.
The last term involving the expectation will be negligible thanks to estimate \eqref{eq: borne reste avec puissances en cN} of Proposition~\ref{proposition - conditioning} and the   tuning of the parameters performed in Section~\ref{tuning parameters}.
\begin{Lem}
\label{lem: hyp quasi orth 1}
There exists $C>0$ such that 
\begin{equation}\label{eq:lem1}
\sup_{x_{N_k}  \in \T^d}\int \big | \Phi _{N_k} (Z_{N_k}) \big|  M^{\otimes N_k}(V_{N_k})
\, dX_{N_k-1} dV_{N_k} 
 \leq C^{N_k }  \|h\|_{L^\infty(\D)}\theta^{N_{k-1}-1} \tau^{n_k}\;.
\end{equation}
%As a consequence
%\begin{equation}
%\label{eq: lem1 moyenne}
%\big| \bbE_\eps (    \Phi _{N_k} (Z_{N_k})  ) \big|
% \leq C^{N_k }  \|h\|_{L^\infty(\D)}\theta^{N_{k-1}} \tau^{n_k}\;.
%\end{equation}
\end{Lem}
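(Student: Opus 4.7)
The strategy is to convert the $L^1$ integral of $|\Phi_{N_k}|$ into a sum of integrals indexed by collision trees, via the change of variables \eqref{change of variables 2}, after which the bound reduces to a classical Lanford-type estimate.

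First I would bound $|h|$ by $\|h\|_{L^\infty(\D)}$ inside the definition \eqref{defPhiN exp}. The resulting majorant of $|\Phi_{N_k}|$ depends on $Z_{N_k}$ only through the indicator functions $\indc_{Z_\sigma\in\cR_{a,\n_k}}$, which are invariant under simultaneous translation of all positions on the torus $\T^d$; hence the supremum over $x_{N_k}$ equals the plain integral on $\T^d\times\R^d$ (since $|\T^d|=1$). Next, I would exploit the symmetry of $M^{\otimes N_k}$ under permutations by the change of variable $W=Z_\sigma$ for each $\sigma\in{\mathfrak S}_{N_k}$: every summand contributes the same integral, so the sum over $\sigma$ just cancels the $1/N_k!$ prefactor, leaving
$$
\int |\Phi_{N_k}|\,M^{\otimes N_k}\,dZ_{N_k}\;\leq\;\|h\|_{L^\infty}\,\mu_\eps^{N_k-1}\sum_{a\in\cA^\pm_{1,N_k-1}}\int_{\cR_{a,\n_k}} M^{\otimes N_k}(V_W)\,dW.
$$

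The crux is now to apply the change of variables \eqref{change of variables 2}, which is injective thanks to the absence of recollisions built into $Q^{\eps0}$ and has Jacobian $\mu_\eps^{-(N_k-1)}\prod_{i=1}^{N_k-1}|(v_{1+i}-v^\eps_{a_i}(t_i^+))\cdot\omega_i|_+$. This exactly cancels the $\mu_\eps^{N_k-1}$ prefactor, and conservation of kinetic energy along the backward pseudo-trajectory yields $M^{\otimes N_k}(V_W)=M(v_1)\prod_{i=1}^{N_k-1}M(v_{1+i})$. The integral thus reduces to
$$
\|h\|_{L^\infty}\sum_a\int dz_1\,M(v_1)\int \prod_{i=1}^{N_k-1}\Bigl[M(v_{1+i})\,\bigl|(v_{1+i}-v^\eps_{a_i}(t_i^+))\cdot\omega_i\bigr|_+\,dt_i\,d\omega_i\,dv_{1+i}\Bigr],
$$
integrated over the set of admissible parameters that is compatible with the slot structure imposed by $\n_k$ (exactly $n_j$ times $t_i$ lying in $(\theta-j\tau,\theta-(j-1)\tau)$ for $j=1,\dots,k$).

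From here I would separate the time integrals from the velocity and angular ones. For the last slot, the $n_k$ ordered times contribute $\tau^{n_k}/n_k!$; for each earlier slot $j<k$ I would use the cruder bound $\theta^{n_j}/n_j!$, producing the desired overall factor $\theta^{N_{k-1}-1}\tau^{n_k}/\prod_{j=1}^k n_j!$. The velocity and angular integrals are estimated by iterating from $i=N_k-1$ downwards: at each level, integration of $M(v_{1+i})$ against $|(v_{1+i}-v^\eps_{a_i}(t_i^+))\cdot\omega_i|_+$ over $v_{1+i}$ and $\omega_i$ produces a factor $C(1+|v^\eps_{a_i}(t_i^+)|)$ depending only on the earlier velocities $v_1,\dots,v_i$, which is absorbed at the next level by the Maxwellian weight on those variables. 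Summing over the $|\cA^\pm_{1,N_k-1}|=2^{N_k-1}(N_k-1)!$ collision trees, the combinatorial factor $(N_k-1)!$ is compensated by the inverse factorials $1/\prod n_j!$ coming from the time simplex together with the standard multinomial bookkeeping in the Gaussian moment step, yielding the claimed $C^{N_k}\|h\|_{L^\infty}\theta^{N_{k-1}-1}\tau^{n_k}$.

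The main technical obstacle is precisely this last iterative velocity estimate: a naive use of $\int M(v)|v|^m dv\sim m^{m/2}$ would introduce superexponential growth in $N_k$. One must exploit the linear dependence of each cross-section factor on a single previously introduced velocity, together with the hierarchical ordering of the tree, so that each Gaussian moment integration consumes one unit of weight and returns only an $O(1)$ factor. This is the standard Lanford-type bookkeeping, and the constant $C$ in the final bound absorbs the factor $2^{N_k-1}$ from the choice of scattering hemispheres as well as all constants arising in the Maxwellian moment calculations.
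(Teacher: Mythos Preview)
Your strategy—undoing the change of variables \eqref{change of variables 2} to return to the Duhamel parameters $(z_1,(t_i,\omega_i,v_{1+i}))$ and then running a Lanford-type continuity estimate—is a legitimate route to this lemma, but it is genuinely different from what the paper does. The paper never goes back to the backward parameters: it stays in the $Z_{N_k}$ variables at time $t_{\rm stop}$, uses the multiplicity bound that at most $4^{N_k-1}$ pairs $(\sigma,a)$ can be compatible with a given $Z_{N_k}$ (yielding \eqref{estimatePhiexp}), parametrises the forward flow by an ordered tree $T_\prec$, and integrates in the relative positions $\hat x_i$ via the tube estimate \eqref{eq: borne somme qi}. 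The reason for this choice is robustness: the backward change of variables is injective only without recollisions, so it does not extend to $\Phi^{\rm rec}_{N_k}$ in Lemma~\ref{lem: hyp quasi orth 1bis}, and it gives no obvious handle on the cross products needed for Lemma~\ref{lem: hyp quasi orth 2}. Your approach, by contrast, is the more classical one and is shorter for this specific lemma.

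That said, the combinatorics you sketch do not close as written. Two points. First, bounding the earlier slots separately by $\theta^{n_j}/n_j!$ and then multiplying by the tree count $2^{N_k-1}(N_k-1)!$ leaves the multinomial $(N_k-1)!/\prod_j n_j!$, which can be as large as $k^{N_k-1}$; since $k$ ranges up to $K=\theta/\tau$ (unbounded in $\eps$) this is not $C^{N_k}$. The cure is to group all slots $j<k$ into one interval of length $\leq\theta$, getting $\theta^{N_{k-1}-1}/(N_{k-1}-1)!$ and hence only the harmless binomial $\binom{N_k-1}{n_k}\le 2^{N_k-1}$—exactly what the paper does in \eqref{eq: time constraints 4.1}. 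Second, for a \emph{fixed} tree the velocity integral is not $O(1)^{N_k}$: for the star $a_i\equiv 1$ with no scattering every cross section reduces to $C(1+|v_1|)$ and $\int M(v_1)(1+|v_1|)^{N_k-1}dv_1\sim N_k^{(N_k-1)/2}$. The Lanford mechanism that avoids this is to sum over $a_i\in\{1,\dots,i\}$ \emph{inside} the iteration, producing at level $i$ a factor $C(i+\sqrt{i\,E_i})$ whose product, after Gaussian integration, is $\sim C^{N_k}(N_k-1)!$ and cancels the time-simplex factorial above. Summing over trees only at the end while asserting $O(1)$ per velocity step is a double count and does not give $C^{N_k}$.
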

\begin{Lem}
\label{lem: hyp quasi orth 2}
There exists $C>0$ such that, for any $\ell = 1,\dots, N_k $,
\begin{equation}\label{eq:lem2}
\begin{aligned}
\sup_{x_{2N_k - \ell}  \in \T^d}\int \big | \Phi _{N_k} (Z_{N_k}) \Phi _{N_k} (Z_{\ell} , Z_{N_k +1, 2N_k - \ell})   \big|  M^{\otimes (2N_{k} - \ell)}(V_{{2N_k - \ell}})
\, dX_{{2N_k - \ell-1}} dV_{{2N_k - \ell }} \\
 \leq C^{{N_k } }
  \mu_\eps^{\ell -1} N_k^{-\ell}  
\|h\|_{L^\infty(\D)}^2 \; 
\theta^{2N_{k}-\ell-1- n_k} \tau^{n_k} \;.
\end{aligned}
\end{equation}
\end{Lem}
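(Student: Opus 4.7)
My approach is to mirror the proof of Lemma~\ref{lem: hyp quasi orth 1}, now carefully accounting for the fact that the two evaluations of $\Phi_{N_k}$ share exactly $\ell$ arguments. Starting from the definition~\eqref{defPhiN exp}, I would distribute the absolute value and the sums over permutations and collision trees to obtain
\[
|\Phi_{N_k}(Z_{N_k})\,\Phi_{N_k}(Z_\ell,Z_{N_k+1,2N_k-\ell})| \leq \frac{\mu_\eps^{2(N_k-1)}}{(N_k!)^2}\|h\|_{L^\infty}^2 \sum_{\sigma,\sigma'}\sum_{a,a'} \indc_{Z_\sigma\in\cR_{a,\n_k}}\,\indc_{Z'_{\sigma'}\in\cR_{a',\n_k}}\,,
\]
where $Z' := (Z_\ell, Z_{N_k+1, 2N_k-\ell})$. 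For any pair $(\sigma,\sigma') \in \mathfrak{S}_{N_k}^2$, the images $\sigma(\{1,\ldots,N_k\})$ and $\sigma'(\{1,\ldots,N_k\})$, viewed as subsets of $\{1,\ldots,2N_k-\ell\}$, overlap in precisely $\ell$ indices (the block $\{1,\ldots,\ell\}$), so the two backward pseudo-trajectories share exactly these $\ell$ initial configurations.

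For each fixed quadruple $(\sigma,\sigma',a,a')$, I would apply the backward change of variables~\eqref{change of variables 2} on the first pseudo-trajectory, converting $dZ_\sigma$ into $dz_{\sigma(1)} \prod_i dt_i^{(1)} d\omega_i^{(1)} dv_{1+i}^{(1)}$ with inverse Jacobian $\mu_\eps^{-(N_k-1)} \prod_i ((v^{(1)}_{1+i} - v^\eps_{a_i}(t_i^{(1)+}))\cdot \omega_i^{(1)})_+$. This cancels one factor $\mu_\eps^{N_k-1}$, and the scattering weights combine with the Maxwellian densities to give uniformly bounded velocity and angle integrations, exactly as in Lemma~\ref{lem: hyp quasi orth 1}. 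After this step, the $\ell$ shared initial configurations $z_1,\ldots,z_\ell$ become determined functions of the first-tree parameters, while $z_{N_k+1},\ldots,z_{2N_k-\ell}$ remain as $N_k-\ell$ independent integration variables. I would then perform a partial change of variables on the second tree, unfolding only these $N_k-\ell$ non-shared initial configurations into a corresponding subset of second-tree parameters; this consumes another factor $\mu_\eps^{-(N_k-\ell)}$ and leaves $\ell$ geometric consistency constraints, each imposing that one of the shared positions delivered by the first tree lies on the admissible set of the second. Combining both Jacobians with the prefactor $\mu_\eps^{2(N_k-1)}$ yields precisely $\mu_\eps^{\ell-1}$, matching the stated scaling.

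The time, angle and velocity integration together with the combinatorics then proceed as in Lemma~\ref{lem: hyp quasi orth 1}: for each tree, the simplex over $t^{(j)}_1 > t^{(j)}_2 > \cdots$ with $n_j$ times in the $j$-th interval of length $\tau$ contributes $\prod_j \tau^{n_j}/n_j!$, and carrying this out for the two trees together, after bounding $\tau \leq \theta$ on all but the stopping interval, produces the factor $\theta^{2N_k-\ell-1-n_k}\tau^{n_k}$. The sums over the $2^{N_k-1}(N_k-1)!$ trees $a,a'$ are absorbed by the inverse factorials coming from the time simplices into the constant $C^{N_k}$, while the combinatorial factor $N_k^{-\ell}$ arises from the symmetrization over $(\sigma,\sigma')$: only a fraction of order $N_k^{-\ell}$ of the $(N_k!)^2$ permutation pairs yields a given assignment of the $\ell$ shared particles compatible with both trees.

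The main obstacle I anticipate is controlling rigorously the partial change of variables on the second tree, namely showing that freezing $\ell$ of its initial configurations to prescribed values still permits a bounded integration of the remaining scattering weights, so that each of the $\ell$ consistency constraints truly contributes exactly one factor of $\mu_\eps$ uniformly in the tree structure. This is close in spirit to the recollision analysis carried out in Section~\ref{section - recollisions} and relies on the geometric estimates gathered in Appendix~\ref{geometric estimates}; it represents the bulk of the technical effort needed to close the proof.
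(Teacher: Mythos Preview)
Your proposal takes a genuinely different route from the paper, and the obstacle you flag is real and not resolved by the argument you sketch.

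The paper does \emph{not} attempt a backward change of variables on either tree. Instead it works entirely in the \emph{forward} direction from $t_{\rm stop}$, as in Lemma~\ref{lem: hyp quasi orth 1}. Starting from the full configuration $Z_{2N_k-\ell}$, it records only the $2N_k-\ell-1$ ``clustering collisions'' needed to merge all particles into a single connected tree $T''_\prec$: the first $N_k-1$ edges are the clustering collisions of $Z_{N_k}$ (the full tree $T_\prec$), and the remaining $N_k-\ell$ edges are those collisions in the forward flow of $Z'_{N_k}$ that first connect the non-shared particles $Z_{N_k+1,2N_k-\ell}$ to the existing graph. Each clustering collision imposes a constraint on one relative position $\hat x_i$ of measure $O(\theta/\mu_\eps)$, and these constraints are triangular ($\cB_{T''_\prec,i}$ depends only on $\hat x_1,\dots,\hat x_{i-1},V_{2N_k-\ell}$), so the integration is a straightforward Fubini. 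The scaling $\mu_\eps^{\ell-1}$ then falls out as $\mu_\eps^{2(N_k-1)-(2N_k-\ell-1)}$, and the factor $N_k^{-\ell}$ comes from edge-counting combinatorics ($\sim N_k^{2N_k-\ell-1}$ choices of edges) against the prefactor $(N_k!)^{-2}$ and the velocity factors from~\eqref{eq: inegalite exponentielle}.

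Your ``partial backward change of variables on the second tree'' is the wrong tool here. In a backward pseudo-trajectory, particles are added one at a time from a single root, so there is no clean way to freeze $\ell$ terminal configurations and unfold the others: the $\ell$ matching conditions you obtain are nonlinear constraints coupling all the second-tree parameters, not $\ell$ independent conditions each of size $1/\mu_\eps$. The recollision analysis of Section~\ref{section - recollisions} and Appendix~\ref{geometric estimates} does not address this; those estimates control \emph{one} extra collision constraint and gain smallness in $\eps$, whereas you need $\ell$ matching constraints each worth a factor $1/\mu_\eps$. Finally, your explanation of $N_k^{-\ell}$ via a fraction of compatible permutation pairs is not how the factor arises; every $(\sigma,\sigma')$ contributes, and the gain comes from the reduced number of clustering constraints in the merged tree.
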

Assuming those lemmas are true,  let us   complete the estimate of $I_{\n_k}$.
Starting from~\eqref{eq: inegalite I exp}, it is enough to apply Proposition \ref{prop: Quasi-orthogonality estimates}, and~(\ref{eq: borne reste avec puissances en cN}) of Proposition~\ref{proposition - conditioning}.
We finally get 
\begin{equation}
\begin{aligned}
\label{eq: borne finale I}
|I_{\n_k}   |
& \leq 
  C^{{N_k } } \|g_0\|_{L^2_M} \|h\|_{L^\infty(\D)}  
\Big( \big(  \sum_{\ell =1}^{N_k} \theta^{2N_{k}-\ell-1- n_k} \tau^{n_{k}}  +  \e   \theta^{2(N_{k-1}-1)} \tau^{2n_k}\big)^{1/2}
 \\
 & \qquad\qquad\qquad\qquad\qquad\qquad 
 + \theta^{N_{k-1} - \frac12}  \tau^{n_k} \;  \bbV ^{\frac{d \gamma}{2}} \;\mu_\eps^{\frac{\gamma}{2} + 1 } \, \delta^{\frac{d \gamma-1}{2}}\Big) \\
& \leq 
 \|g_0\|_{L^2_M}\|h\|_{L^\infty(\D)} \; ( C \theta) ^{{N_{k-1} } + n_k/2} \; \tau^{n_{k}/2} 
\end{aligned}
\end{equation}
thanks to~(\ref{H1}) and~$\delta \leq \tau \ll 1$.

To complete Proposition~\ref{proposition - superexponential trees}, we will show that the contribution of the superexponential trees is negligible. The superexponential trees are such that 
 $N_{k-1} \leq 2^{k} \leq  n_k$, this leads to
\begin{align}
\label{eq: borne finale I exp}
|I_{\n_k}   | \leq \|h\|_{L^\infty(\D)}  \|g_0\|_{L^2_M}\; ( C \theta) ^{N_{k-1}+n_k/2}  
\; \tau^{n_k/2} \leq \|h\|_{L^\infty(\D)} \;  ( C \theta^3 \tau) ^{n_k/2} .
\end{align}
The parameters $\theta, \tau$ satisfy \eqref{H3} so   we can sum over $(n_j)_{j\leq k}$ and the series   is controlled by
\begin{equation} \label{eq:superepxconcl}
\Big | \int dz_1    G^{\eps,{\rm exp}}_{1} (\theta,z_1)
h(z_1)\Big |
\leq \|h\|_{L^\infty(\D)} \|g_0\|_{L^2_M}  \sum_{k = 1} ^K 2^{k^2} ( C \theta^3 \tau) ^{2^{k-1}}\,. 
\end{equation}
The proof of Proposition~\ref{proposition - superexponential trees} is complete.

\bigskip

Before proving Lemmas \ref{lem: hyp quasi orth 1} and \ref{lem: hyp quasi orth 2},  let us   introduce some notation. For any positive integer~$N$, we shall denote  as previously by $\cT_N$ the set of trees
(minimally connected graphs)  with $N$ vertices.
We further denote by~$\cT^\prec_N$ the set of ordered trees. A tree $T_\prec \in \cT^\prec_N$  is represented by an ordered sequence of edges $\left(q_i,\bar q_i\right)_{1 \leq i \leq N-1}$.

\begin{proof}[Proof of Lemma {\rm\ref{lem: hyp quasi orth 1}}]
%Starting from the expression \eqref{defPhiN exp}, we notice that a configuration $Z_{N_k}$, associated with a collision tree (with signs) $a$ and a permutation $\sigma$, can be indexed by a signed graph $T   \in \cT^{ \pm}_{N_k}$, where each edge records the collision between two particles, and the edge sign specifies whether the collision in the forward flow is with ($+$) or without ($-$) scattering. 

%Conversely, for each fixed $T  \in \cT^{ \pm}_{N_k}$ and 
 For each configuration $Z_{N_k}$, there exist at most $4^{N_k -1}$  different $(\sigma, a)$ such that $Z_{ \sigma }\in \cR_{a,{\mathbf n}_k} $. Indeed at each collision in the forward pseudo-trajectory, 
the particle which  disappears has to be chosen, as well 
as a possible   scattering. To fix these discrepancies, we introduce two sets of signs~$\bar s_i$ and~$s_i$ which determine respectively which particle should be removed (say~$\bar s _i = +$ if the particle with largest index remains,~$\bar s _i = -$ if it disappears) and whether there is scattering ($  s _i = +$) or not ($  s _i = - $).
Note that
the signs~$(s_i)_{1 \leq i \leq N_k -1}$ are encoded  in
 the tree $a$ while~$(\bar s_i)_{1 \leq i \leq N_k -1}$ are known if~$\sigma$ is given. If we prescribe the set~$\bS _{N_k- 1} :=( s_i, \bar s_i) _{1\leq i \leq N_k- 1}$, then the mapping
$$
\big(a, \sigma, z_1, ( t_i,\omega_i,v_{1+i})_{1 \leq i \leq N_k - 1}\big) \longmapsto Z^\e_{\sigma }(t_{{\rm stop}})  
$$
restricted to pseudo-trajectories compatible with $\bS _{N_k- 1}$, is  injective.
This leads to
\begin{equation}
\label{estimatePhiexp}
\big| \Phi _{N_k} (Z_{N_k} ) \big| \leq  \| h \|_\infty 
{\mu_\eps^{N_k-1}\over N_k!}  \sum_{\bS _{N_k- 1}} 
 \indc_{\{ Z_{N_k} \in \cR_{\bS _{N_k- 1}}  \}    } \, ,
\end{equation}
where $\cR_{\bS _{N_k- 1}} $ is the set of configurations such that the forward flow compatible with $\bS _{N_k- 1}$ exists, and with the  constraints respecting the sampling (we drop the dependence of the sets on $\n_k$, not to overburden notation).

\medskip

We are now going to evaluate the cost of the constraint $Z_{N_k} \in \cR_{\bS _{N_k- 1}} $ for a given~$\bS _{N_k- 1}$.
For this it is convenient to record the collisions in the forward dynamics in an ordered tree~$T_\prec=  (q_i, \bar q_i)_{1\leq i \leq N_k-1}$: the first collision, in the forward flow, is between particles~$q_1$ and~$\bar q_1$ at time~$\tau_1 \in (t_{{\rm stop}},\theta)$, and the last collision is between~$q_{N_k-1}$ and~$\bar q_{N_k-1}$ at time~$\tau_{N_k-1} \in (\tau_{N_k-2},\theta)$. Notice that compared with the definition of (backward)  pseudo-trajectories,  since we follow the trajectories forward in time we choose an increasing order in the collision times (namely~$\tau_{i}= t_{N_k-i}$). 
This leads to
\begin{equation}
\label{estimatePhi}
\big| \Phi_{N_k} (Z_{N_k} ) \big| \leq  \| h \|_\infty 
{\mu_\eps^{N_k-1}\over N_k!}  \sum_{  \bS_{N_k- 1}} \sum_{T_\prec \in \cT^\prec_{N_k}} 
 \indc_{\{ Z_{N_k} \in \cR_{T_\prec,  \bS_{N_k- 1}}  \}    } \, ,
\end{equation}
where $\cR_{T_\prec,   \bS_{N_k- 1}} $ is the set of configurations such that the forward flow compatible with the couple~$(T_\prec,  \bS_{N_k- 1})$ exists,
 and with the  constraints respecting the sampling.  Actually  note that   the above sum over ordered trees corresponds to a partition, meaning that for any given~$Z_{N_k} $, at most one term is non zero.

\medskip

 Given such an admissible tree $T_\prec$ let us define the relative positions at time~$t_{{\rm stop}}$
 $$
 \hat x_i:=x_{q_i}-x_{\bar q_i}\, .
 $$
Given the relative positions $\left(\hat x_s \right)_{s < i}$ and the velocities $V_{N_k}$, we fix a forward flow with collisions at times $\tau_{ 1}< \dots < \tau_{i-1} < \theta$. By construction, $q_i$ and $\bar q_i$ belong to two forward pseudo-trajectories that have not interacted yet. In other words,~$q_i$ and $\bar q_i$ do not belong to the same connected component in the graph~$G_{i-1} := (q_j,\bar q_j)_{1 \leq j \leq i-1}$. Inside each connected component, relative positions are fixed by the previous constraints, and one degree of freedom remains. Therefore we are going to vary $\hat x_i$ so that a forward collision at time $\tau_i \in ( \tau_{i-1},\theta)$ occurs between $q_i$ and $\bar q_i$ (moving rigidly the corresponding connected components). This collision condition defines a set $\cB_{T_\prec, i}   (\hat x_{1}, \dots, \hat x_{i-1}, V_{N_k})$.
The particles $q_i$ and $\bar q_i$ move in straight lines, therefore the measure of this set can be estimated by
$$
|\cB_{T_\prec, i} | \leq \frac{C}{\mu_\eps} |v^\e_{q_i}(\tau^+_{i-1}) - v^\e_{\bar q_i}(\tau^+_{i-1})| 	\left(\theta - \tau_{i-1}\right)
$$
and there holds
\begin{equation}
\label{eq: borne somme qi}
\sum_{q_i,\bar q_i}|\cB_{T_\prec, i} |  \leq \frac{C}{\mu_\eps} \left( V_{N_k}^2 + N_k\right) N_k 	\left(\theta - \tau_{i-1}\right)\;.
\end{equation}
Hence by Fubini's theorem
\begin{align}
& \sum_{T_\prec \in \cT^{\prec }_{N_k}}\int d\hat X_{N_k-1} 
\prod_{i=1}^{N_k-1} \indc_{\cB_{T_\prec, i }} \leq 
\sum_{T_\prec \in \cT^{\prec }_{N_k}} \int d\hat x_1
 \indc_{\cB_{T_\prec, 1 }} \int d\hat x_{ 2}\,  \dots\int d\hat x_{N_k-1} \indc_{\cB_{T_\prec, N_k-1}}
 \nonumber \\
 & \qquad \qquad \qquad \qquad 
 \leq \left( \frac{C}{\mu_\eps}\right)^{N_k-1}  \left( V_{N_k}^2 + N_k\right)^{N_k-1} N_k^{N_k-1}
 \int_{t_{{\rm stop}}}^{\theta} d\tau_{ 1}\, \dots \int_{\tau_{N_k-2} }^{\theta} d \tau_{N_k-1} 
 \indc_{\n_{k}}
 \label{eq: integration Bi}
\end{align}
where $\indc_{\n_{k}}$ is the constraint on times respecting the sampling in \eqref{eq: def I nk}. 
Retaining only the information that $n_k$ times are in the interval $(t_{stop}, t_{stop} + \tau)$ and the other $N_{k-1}-1$ times are in $(t_{stop} + \tau, \theta)$, we get by integrating over these ordered times an upper bound of the form
\begin{equation}
\label{eq: time constraints 4.1}
\frac{\tau^{n_k}}{n_k!} \frac{\theta^{N_{k-1}-1}}{(N_{k-1}-1) !} 
\leq  \frac{2^{N_k-1}}{(N_k-1) !} \tau^{n_k} \, \theta^{N_{k-1}-1} \; .
\end{equation}
Up to a factor $C^{N_k}$,  the  factorial $N_k!$ compensates the factor  $N_k^{N_k}$ in \eqref{eq: integration Bi}.
Furthermore,  for any $K,N$ and dimension $D>0$ 
\begin{equation}
\label{eq: inegalite exponentielle}
\sup_{ V \in \bbR^D} \Big\{ \exp \big( - \frac18 |V|^2 \big)  \; ( |V|^2 + K)^N  \Big\} \leq C^N e^K \; N^N.
\end{equation}
After integrating the velocities with respect to the measure~$M^{\otimes N_k}$, we deduce from the previous inequality that the term $\left( V_{N_k}^2 + N_k\right)^{N_k}$ leads to another factor of order $N_k^{N_k}$ which is compensated, up to a factor $C^{N_k}$, by the $N_k !$ in
\eqref{estimatePhiexp}.
Combining all these estimates,~$\big| \Phi _{N_k} \big|$ can be bounded from above uniformly with respect to one remaining parameter which takes into account the translation invariance of the system. For clarity, we decide arbitrarily that the 
 remaining degree of freedom is indexed by the variable~$x_{N_k}$.
This completes the proof of Lemma \ref{lem: hyp quasi orth 1}.
\end{proof}

\begin{proof}[Proof of Lemma~{\rm\ref{lem: hyp quasi orth 2}}]
The proof is similar to the one of the previous lemma, however, we have to analyse now  the dynamical constraints associated with two configurations $Z_{N_k} = (Z_\ell, Z_{\ell +1,  N_k} )$ and 
$Z'_{N_k} = (Z_\ell, Z_{N_k +1, 2N_k - \ell})$ sharing $\ell$ particles.
%denoting for~$i<j$
%$$
%Z_{i, j}:=(z_{i},z_{i+1},\dots z_{ j}) \, .
%$$
For each configuration, we fix the parameters  coding the collisions $\bS _{N_k- 1} =( s_i, \bar s_i) _{1\leq i \leq N_k- 1}$ and $\bS _{N_k- 1}' =( s_i', \bar s_i') _{1\leq i \leq N_k- 1}$.
By analogy with formula \eqref{estimatePhiexp}, we get
\begin{equation}\begin{aligned} 
\label{eq: Phi au carre}
&\big| \Phi_{N_k} (Z_{N_k}) \Phi_{N_k} (Z_{\ell} , Z_{N_k +1, 2N_k - \ell}) \big| \\
&\qquad \leq \| h \|_\infty^2
 \left( {\mu_\eps^{N_k-1}\over N_k!} \right)^2  
  \sum_{\bS _{N_k- 1}\atop \bS _{N_k- 1}'}
 \indc_{\{ Z_{N_k} \in \cR_{\bS _{N_k- 1}} \}    }
  \indc_{\{ Z_{N_k}' \in \cR_{\bS _{N_k- 1}'}  \}    } \, .
% \indc_{\{ Z_{2 N_k - \ell} \in \cR_{T_\prec''}^{{\rm exp}} \}    } ,
\end{aligned}
\end{equation}
We consider the forward flows of each set of particles~$Z_{N_k}$ and~$Z'_{N_k} $ starting
 at time~$t_{{\rm stop}}$. Both dynamics evolve independently and each one of them should have exactly 
 $N_k-1$ collisions to be compatible with an ordered tree as the ones used in the proof of Lemma~\ref{lem: hyp quasi orth 1}. 
  As the configurations $Z_{N_k}$ and~$Z'_{N_k} $ share $\ell$ particles in common,     strong correlations are imposed in order to produce a total of $2( N_k-1)$ collisions. For our purpose, it is enough to relax these constraints and to record only $2 N_k - \ell -1$ (weakly dependent)   ``clustering collisions" which will be indexed by an ordered graph~$T''_\prec$ with  $2N_k-\ell-1$ edges, as well as relative positions~$(\hat x_i)_{1 \leq i \leq 2N_k-\ell-1}$ at time $t_{{\rm stop}}$.

\begin{figure}[h] 
\centering
\includegraphics[width=3.5in]{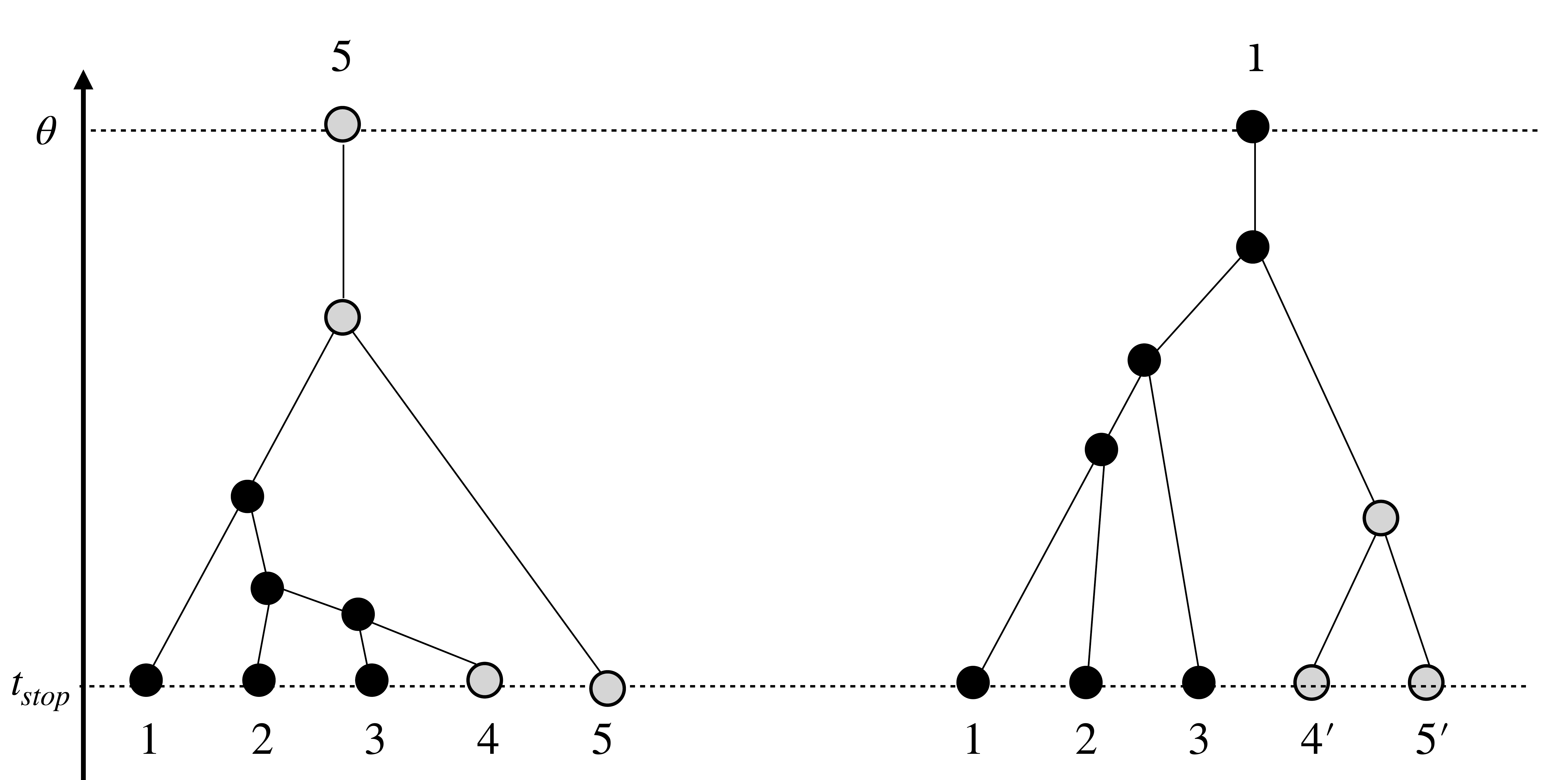} 
\hskip.6cm
\includegraphics[width=2.2in]{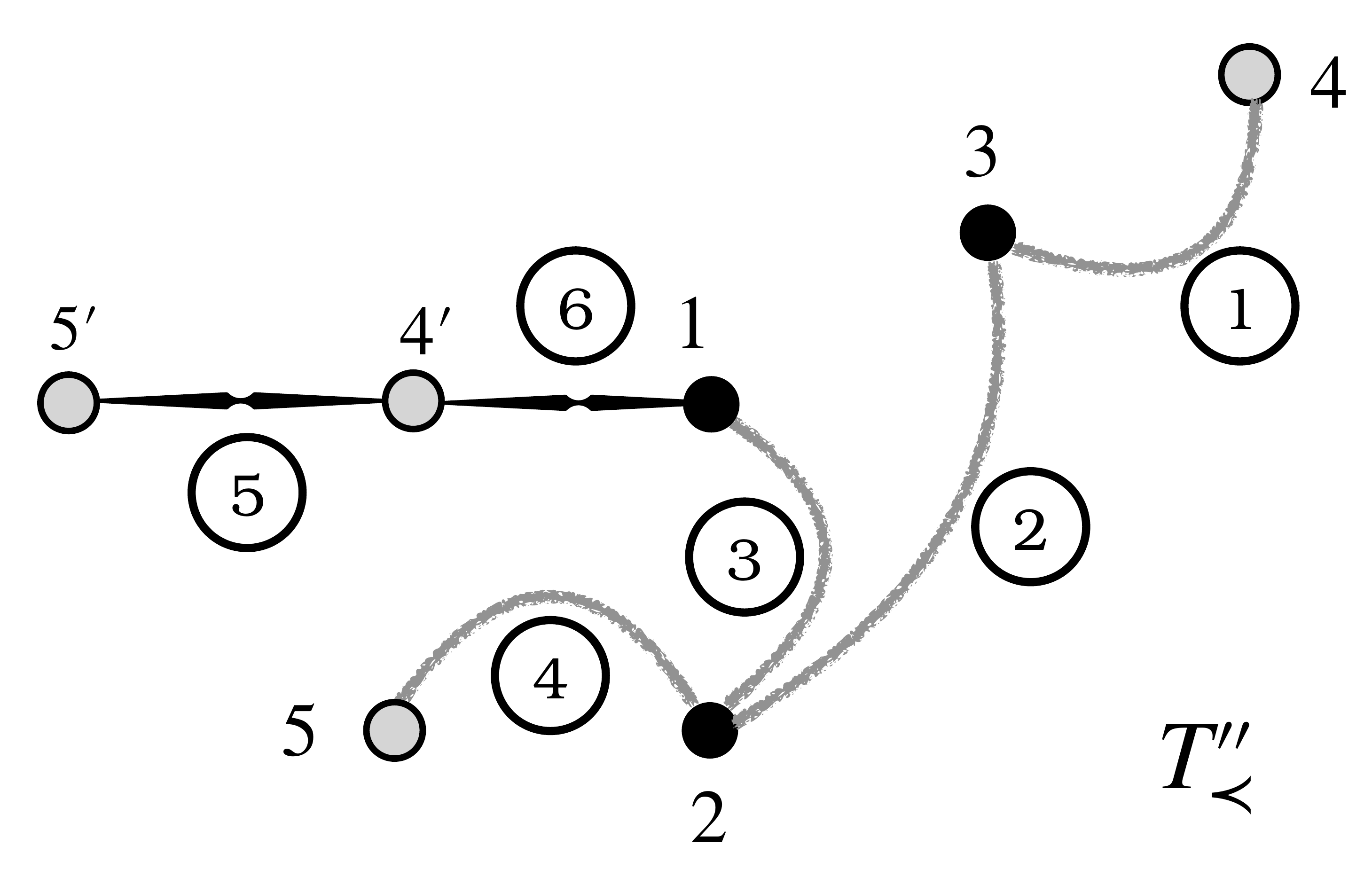} 
\caption{\small 
In the   figure on the left, an example of 2 pseudo-trajectories sharing $\ell = 3$ particles with $N_k = 5$.
The collision graph $T_\prec$ associated with the left pseudo-trajectory starting from $Z_5$ is depicted by 
the bended grey edges ordered according to the collision times.
The complete tree $T''_\prec$ is built starting from $T_\prec$ to which  two additional straight edges (numbered $5$ and $6$)
have been added to connect $4'$ and $5'$.} 
\label{figure: pseudo-trajectories}
\end{figure}

 The ordered graph~$T''_\prec$   is constructed as follows. As in the proof of Lemma~\ref{lem: hyp quasi orth 1}, we denote by~$T_\prec$ the ordered collision tree corresponding to the forward flow  of~$Z_{N_k}$, and by~$(\tau_i)_{1 \leq i \leq N_k-1}$ and~$(\hat x_i)_{1 \leq i \leq N_k-1}$ the  collision times and relative positions. 
 The first $N_k-1$ edges $(q_i, \bar q_i  )_{1 \leq i \leq N_k-1}$ of the graph ~$T''_\prec$ are the edges of  the  ordered tree~$T_\prec$, so that~$T_\prec$ is fully embedded in~$T''_\prec$
(this prescribes the constraints on the particles $Z_{N_k}$). The last $N_k - \ell$ edges in $T''_\prec$ will record the additional constraints on the remaining particles $Z_{N_k +1, 2N_k - \ell}$ which are involved in the dynamics of $Z_{N_k}'$ (see Figure \ref{figure: pseudo-trajectories}).

  The  edges~$(q_i, \bar q_i  )_{N_k \leq i \leq 2N_k-\ell}$     are added  as follows, keeping only the {\it clustering collisions} in the forward dynamics of $Z'_{N_k}$, i.e. the collisions associated with edges which are not creating  cycles in the graph :
\begin{itemize}
 \item the first clustering collision is the first collision in the forward flow of~$Z'_{N_k}$ involving at least one particle with label in~$[N_k+1,   2N_k-\ell]$.
  We denote  by~$(q_{N_k }, \bar q_{N_k })$   the  labels of the colliding particles and by~$\tau_{N_k }$  the corresponding   colliding time. We also define the ordered graph~$G_{N_k} = (q_j,  \bar q_j)_{1\leq j \leq N_k}$. Note that on Figure \ref{figure: pseudo-trajectories}, the graph $G_5$ is made of two components $\{1, 2, 3,4, 5\}$ and $\{ 4',5'\}$.
  \item
  for~$N_k +1 \leq i \leq 2N_k-\ell-1$, the $i$-th clustering collision is the first collision (after $\tau_{i-1}$) in the forward flow of~$Z'_{N_k}$ involving two particles which are not  in the same connected component of the graph $G_{i-1}$. 
By construction at least one of these particles belongs to   $Z_{N_k +1, 2N_k - \ell}$.
  We denote  by~$(q_{i }, \bar q_{i })$   the  labels of the colliding particles and by~$\tau_{i }$  the   corresponding collision time.   We also define the ordered graph~$ G_{i} = (q_j,  \bar q_j)_{1\leq j \leq i}$. 
   \end{itemize}
 By this procedure, we end up with a tree~$T''_\prec := (q_i, \bar q_i  )_{1\leq i \leq 2N_k-\ell - 1}$ with no cycles (nor multiple edges).
 We define as above the relative positions~$ \hat x_i:=x_{q_i}-x_{\bar q_i}$.

 \medskip
 Note that the sequence of times~$(\tau_i)_{1\leq i \leq 2N_k-\ell - 1}$ is only partially ordered. Indeed the times $\tau_1 < \dots < \tau_{N_k-1}$ associated with $Z_{N_k}$ are ordered, as well as the times   
 $\tau_{N_k} < \dots < \tau_{2 N_k-\ell -1}$ associated with the clustering collisions in $Z'_{N_k}$, but they are not mutually ordered. 
 Nevertheless, this is not  a problem since the only important point is that the  collision sets~$(\cB_{T''_\prec, i})_{1 \leq i \leq 2N_k-\ell-1}$, defined as  in the proof of Lemma~\ref{lem: hyp quasi orth 1}, only depend  on~$\hat x_{1}, \dots, \hat x_{i-1}, V_{2N_k-\ell}$. 
  When~$i \geq N_k$, this is less obvious than in the previous case since in the construction of~$T''_\prec$ some collisions (those in the forward flow of~$Z'_{N_k}$ leading to cycles)  have been left out, so one needs to check that the corresponding trajectories before time~$\tau_i $ can be reconstructed knowing only~$\hat x_{1}, \dots, \hat x_{i-1}, V_{2N_k-\ell }$. 
 
By construction,  for~$i \geq N_k$, the two particles $(q_i, \bar q_i)$ colliding at  time~$\tau_i$ belong  to two different connected components $C_{i-1}(q_i)$ and~$C_{i-1}(\bar q_i)$  of the dynamical graph~$ G_{i-1}  $. The trajectory of $q_i$ in the pseudo-trajectory of~$Z'_{N_k}$  up to time  $\tau_i$ depends only  
 \begin{itemize}
  \item on the relative positions $(\hat x_j) _{(q_j, \bar q_j) \in C_{i-1}(q_i)}$ at $t_{{\rm stop}}$ 
  \item and on any root of $C_{i-1}(q_i)$, for instance the  position $ x_{q_i} $ of $q_i$ at $t_{{\rm stop}}$.
  \end{itemize}
 The same holds for    the trajectory of $\bar q_i$.
 We can therefore write the colliding condition by moving rigidly the two connected components $C_{i-1}(q_i)$ and~$C_{i-1}(\bar q_i)$, which provides as previously a condition on $\hat x_i$.

\medskip

%\begin{figure}[h] 
%\centering
%\includegraphics[width=2.5in]{graphes_dynamiques} 
%\caption{\small {\color{blue} \`a revoir}
%The graph $T_\prec$ of Figure \ref{figure: pseudo-trajectories} is built dynamically. 
%The stage 4 is represented above  when a new edge is added between $q_4 = 1$ and $\bar q_4 = 2$.
%Before the collision between $q_4 = 1$ and $\bar q_4 = 2$, the connected graphs
%$C_3 (q_4) =\{ 1\}$ and $C_3 ( \bar q_4) =\{ 2,3,4'\}$ associated with these labels are independent.
%}
%\label{figure: graphe dynamique}
%\end{figure}
%
%   
% \medskip

From this point, we can   proceed exactly as in the previous lemma and the sets $\cB_{T''_\prec, i}$   satisfy the same estimates as before. 
%The constraint $Z_{2 N_k - \ell} \in\cR_{\bS _{N_k- 1}} $ in \eqref{eq: Phi au carre} can be estimated as in \eqref{eq: integration Bi}.
%The geometrical cost of the collisions leads to a factor  ${1 \over \mu_\eps^{2N_k - \ell-1}}$ and the time integration on the simplex to a factor ${1 \over (2N_k-\ell - 1)!}$. 
%Recall also that the  combinatorial factor  due to the sum over the graphs $T_\prec$ is $C^{2N_k} (2N_k-\ell - 1)!^2$.
%The only additional point to notice is that, by construction,  there is are least $n_k$ times $\tau_i$  on the first time interval of size~$\tau$.

\begin{align}
\label{eq: lemme 4.2 integrale}
\sum_{T''_\prec } & \int d\hat X_{2N_k-\ell - 1}  \!  \!  \! 
\prod_{i=1}^{2N_k-\ell -1}  \!  \!  \! \indc_{\cB_{T''_\prec, i }}\\
& \leq \left( \frac{C}{\mu_\eps}\right)^{2N_k-\ell - 1}  \left( V_{N_k}^2 + N_k\right)^{N_k-1} N_k^{N_k-1}  
 \left( (V'_{N_k})^2+ N_k\right)^{N_k - \ell }   N_k^{N_k-\ell} \nonumber \\
& \qquad \qquad \times  \int_{t_{{\rm stop}}}^{\theta} d\tau_{ 1} \dots\!     \int_{\tau_{N_k-2} }^{\theta} d \tau_{N_k-1} 
 \indc_{\n_{k}} \times \int_{t_{{\rm stop}}}^{\theta} d\tau_{N_k }  \dots \!  \int_{\tau_{2N_k-\ell-2} }^{\theta} d \tau_{2N_k-\ell - 1}\,  . \nonumber
 \end{align}
Notice that the first $N_k-1$ ordered time integrals correspond to the constraints in the tree~$T_\prec$ and are estimated from above by %$\frac{\tau^{n_k}}{n_k!} \frac{\theta^{N_{k-1}}}{N_{k-1}!} 
$\frac{2^{N_k-1}}{(N_k-1)!} \tau^{n_k} \, \theta^{N_{k-1}-1}$
as  in \eqref{eq: time constraints 4.1}.
The sampling in~\eqref{eq: def I nk} is omitted for the remaining times which are simply constrained to satisfy $\tau_{N_k } < \dots < \tau_{2N_k-\ell - 1} \leq \theta$, so that 
$$
\begin{aligned}
&  \int_{t_{{\rm stop}}}^{\theta} d\tau_{ 1}\, \dots \int_{\tau_{N_k-2} }^{\theta} d \tau_{N_k-1} 
 \indc_{\n_{k}} \times \int_{t_{{\rm stop}}}^{\theta} d\tau_{N_k }\, \dots \int_{\tau_{2N_k-\ell-2} }^{\theta} d \tau_{2N_k-\ell - 1} \\
& \qquad 
\leq 
\frac{2^{N_k-1}}{(N_k-1) !} \tau^{n_k} \, \theta^{N_{k-1}-1} \times 
\frac{\theta^{N_k-\ell}}{(N_k-\ell)!}
\leq 
\frac{C^{N_k }}{( N_k-\ell)!(N_k-1)! }  \tau^{n_k} \, \theta^{2 N_k - \ell -1 - n_k} \, .
\end{aligned}
$$Plugging this estimate in \eqref{eq: lemme 4.2 integrale}, we deduce that 
$$
\begin{aligned}
\sum_{T''_\prec } & \int d\hat X_{2N_k-\ell - 1} 
\prod_{i=1}^{2N_k-\ell -1} \indc_{\cB_{T''_\prec, i }} \\
&  \leq 
\left( \frac{C}{\mu_\eps}\right)^{2N_k-\ell - 1}    \tau^{n_k} \, \theta^{2 N_k - \ell -1 - n_k}
  \left( V_{N_k}^2 + N_k\right)^{N_k-1}  \left( (V'_{N_k})^2+ N_k\right)^{N_k  } N_k^{-\ell}. 
\end{aligned}
$$
 We conclude as in the proof of Lemma \ref{lem: hyp quasi orth 1} by integrating  with respect to velocities $V_{2N_k - \ell}$, and by using the prefactor $(N_k!)^{-2}$ from~(\ref{eq: Phi au carre})  to compensate, up to a factor~$C^{N_k} $, the  divergence $N_k^{2N_k}$ coming from (\ref{eq: inegalite exponentielle}). 
\end{proof}

%%%%%%%%%%%%%%%%%%%%%%%%%%%%%%%%%%%%%%%%%%%%%%%%
%%%%%%%%%%%%%%%%%%%%%%%%%%%%%%%%%%%%%%%%%%%%%%%%

\section{The cost of non-clustering constraints}
\label{section - recollisions}

\setcounter{equation}{0}

In this section we prove Proposition~\ref{proposition - recollisions}.  
The proof consists in applying Proposition~\ref{prop: Quasi-orthogonality estimates}, and for this we revisit
the proof of Lemmas~\ref{lem: hyp quasi orth 1} and~\ref{lem: hyp quasi orth 2}, to gain some extra smallness thanks to the recollision.
Recall that 
\begin{equation}\label{eq:G1rec}
 \begin{aligned}
 &  G^{\eps,{\rm rec }} _1 (\theta ):=  \sum_{k = 1} ^{K } \sum _{( n_j \leq 2^j)_{j\leq k-1 } }  \sum_{ r = 1}^R  \sum_{n_k\geq 0    }   \sum_{   n_k^0 + n_k^{\rm{rec}}=n_k}Q^{\eps0} _{1, n_1} (\tau) \dots Q^{\eps0}_{N_{k-2}, N_{k-1} } (\tau)   \\
 &   \circ Q^{\eps0} _{N_{k-1},N_{k-1} +n_k^{0}  }( (r-1) \delta)   Q^{\rm rec } _{N_{k-1} +n_k^0 , N_{k-1} +n_k^0 + n_k^{\rm{rec}}   }  ( \delta)  \widetilde G^{\eps}_{N_k}  (\theta- (k-1)  \tau - r\delta  ) \indc_{|V_{N_k} | \leq\mathbb V} \,.
 \end{aligned}
\end{equation}
Let us start by fixing an integer~$1 \leq k \leq K$, integers~$ (n_j)_{1 \leq j \leq k-1}$ with~$n_j\leq 2^j$, as well as an integer $1 \leq r \leq R$, and two integers $n_k^0$, $n_k^{\rm{rec}}$ summing to~$n_k$. We set~$\n_k:=( (n_j)_{1 \leq j \leq k-1},n_k^0,n_k^{\rm{rec}},r)$
and
\begin{align*}
I^{{\rm rec}}_{r, \n_k}  := \int  & h(z_1)
Q^{\eps0} _{1, n_1} (\tau) \dots  \\
& \dots Q^{\eps0} _{N_{k-1},N_{k-1} +n_k^{0}  }( (r-1) \delta)Q^{\rm rec } _{N_{k-1} +n_k^0 , N_{k-1} +n_k^0 + n_k^{\rm{rec}}   }  ( \delta)   \widetilde G^\eps_{N_k} (t_{{\rm stop}}) \indc_{|V_{N_k} | \leq\mathbb V}
\end{align*}
with
$t_{{\rm stop}}: = \theta- (k-1)  \tau - r\delta$.
As previously we want to use a change of variables in order to write an expression of the type
$$
I^{{\rm rec}}_{r, \n_k}  = \int    \Phi^{{\rm rec}}_{  N_k}  (Z_{N_k} ) \widetilde G^{\eps}_{N_k} (t_{{\rm stop}} ,Z_{N_k}  )  dZ_{N_k}  \, .$$
However contrary to the previous case, the presence of recollisions requires the introduction of additional parameters to retrieve the injectivity of the change of variables~(\ref{change of variables}).  On the time interval~$(t_{{\rm stop}}+\delta, \theta)$, 
the 
situation is the same as in the previous section since there are no recollisions by definition.
On $(t_{{\rm stop}}, t_{{\rm stop}} +\delta)$ however,  the
construction of the forward dynamics starting from a configuration~$Z_{N_k}$ is more intricate since there is at least one recollision.   The important fact is that the number of   recollisions is under control.
Indeed
  the configuration at time $t_{{\rm stop}}$ has no cluster of more than $\gamma$ particles by construction, and $|V_{N_k} | $ has been set to be smaller than~$\mathbb V$. It follows that each particle is at a distance less than $2 \bbV \delta$ of at most $\gamma- 1$ other particles at time $t_{{\rm stop}}$. But thanks to the energy cut-off, two particles which are at a distance larger  than~$2 \bbV \delta$ at time~$t_{{\rm stop}}$ cannot collide during the time interval $(t_{{\rm stop}}, t_{{\rm stop}} +\delta)$. 
 Therefore, each particle may interact at most with~$\gamma- 1$ particles on this small interval. Furthermore, there cannot be any  recollision due to periodicity as~$\bbV \delta \ll1$.
Since the total number of collisions for  a system of $\gamma $ hard spheres in the whole space  is finite (see Section 5 in \cite{Va79}), % or Theorem 1.3 in~\cite{Burago})
 say at most~${\mathcal K}_\gamma$, each particle cannot have more than ${\mathcal K}_\gamma$ recollisions. We then associate with each particle $i$ an index $\kappa_i$ (less than ${\mathcal K}_\gamma$) which is decreased by one each time the particle undergoes a recollision. We denote by~$\bk_{N_k} $ the set of recollision indices~$(\kappa_i)_{1 \leq i \leq N_k}$. 
Given a collision tree $a \in \cA_{1,N_k-1} ^\pm$,
this new set of parameters enables us to recover the lost injectivity, by  applying the following rule to reconstruct the forward dynamics.
At each collision,
\begin{itemize}
\item if the two colliding particles have a positive index, then it is a recollision;
\item if one particle has  zero index, then it is a collision~: the label of the particle which disappears, and the possible scattering of the other colliding particle are prescribed by the collision tree $a$.
\end{itemize}
Note that the disappearing particle  should have zero index, else the trajectory is not admissible.

\medskip

  Finally let us define, for each~$a$ and each~$ \bk_{N_k}$ in  $ \{0,\dots, {\mathcal K}_\gamma\} ^{N_k}$,  the set~$\cR_{\bk_{N_k},a,\n_k}^{{\rm rec}} $ of configurations compatible with pseudo-trajectories having the following constraints:
\begin{itemize}
 \item[(i)]  the number of new particles added respectively on the time intervals $(\theta- j \tau, \theta- (j-1) \tau)$,  $(\theta-(k-1) \tau - (r-1) \delta, \theta-(k-1) \tau)$ and $(\theta-(k-1) \tau - r\delta,\theta-(k-1) \tau - (r-1) \delta)$ are respectively $n_j$,  $n_k^0$ and $n_k^{\rm{rec}}$; 
  \item[(ii)]the pseudo-trajectory involves no recollision on the interval~$(t_{{\rm stop}}+\delta, \theta)$ and at least one on~$(t_{{\rm stop}}, t_{{\rm stop}} +\delta)$;  
  \item[(iii)] 
   the addition of new particles is prescribed by the collision tree $a$ and recollisions between particles are compatible with~$\bk_{N_k}$; 
  \item[(iv)] the total energy at $t_{{\rm stop}} $ is less than $\bbV^2/2$:
   \item[(v)] the configuration at time $t_{{\rm stop}}$ has no cluster of more than $\gamma$ particles.
 \end{itemize}
Then  the change of variables, as in \eqref{change of variables},
$$
\big(z_1, ( t_i,\omega_i,v_{1+i})_{1 \leq i \leq N_k - 1}\big) \longmapsto \left(  Z_{N_k}^\eps (t_{{\rm stop}}), \bk_{N_k} \right) 
$$ 
of range
$$
\Big\{
(  Z_{N_k}, \bk_{N_k}) \in \cD^\eps_{N_k}\ \times   \{0,\dots, {\mathcal K}_\gamma\} ^{N_k}\, ,  \quad  Z_{N_k} \in \cR_{\bk_{N_k},a,\n_k}^{\rm rec} 
\Big \}
$$
is injective (of course not surjective). 
\medskip

So we can now write
$$
I^{{\rm rec}}_{r, \n_k}  = \int    \Phi^{{\rm rec}}_{  N_k}  (Z_{N_k} ) \widetilde G^{\eps}_{N_k} (t_{{\rm stop}} ,Z_{N_k}  )  dZ_{N_k}  $$
where  
 \begin{equation}\label{Phirec-def}
  \Phi^{{\rm rec}}_{  N_k}
  (Z_{N_k})
   := {\mu_\eps^{N_k-1}\over N_k !}  \sum_{\sigma\in \mathfrak S_{N_k} } \sum_{a \in \cA_{1,N_k-1} ^\pm} 
   \sum_{\bk_{N_k}} 
   h( z_{\sigma(1)}^\eps (\theta) ) \indc_{ \{ Z_{\sigma  }  \in 
   \cR_{\bk_{N_k},a,\n_k}^{\rm rec} \} }\prod_{i=1}^{N_k - 1}  s_i \, .
  \end{equation}
Proceeding as in \eqref{eq:hatPhidef 2}, we define $\hat \Phi^{{\rm rec}}_{N_k}$ by substracting the mean and rewrite $I^{{\rm rec}}_{r,\n_k}$ as an expectation
\begin{align*}
I^{{\rm rec}}_{r,\n_k}   = \bbE_\eps\Big( \mu_\eps^{1/2} \; 
\hat \Phi^{{\rm rec}}_{N_k} \big( {\bf Z}^\eps_\cN (t_{{\rm stop}}) \big) \;
  \zeta^\eps_0(  g_0) \; \indc_{  \Upsilon_\cN^\eps}  \Big)
 +   \mu_\eps^{1/2} \;  \bbE_\eps\left( \Phi^{{\rm rec}}_{N_k} \right) 
 \bbE_\eps\left( \zeta^\eps_0(  g_0)  \indc_{  \Upsilon_\cN^\eps}  \right) .
\end{align*}
 Following  \eqref{eq: inegalite I exp}, a Cauchy-Schwarz inequality implies
$$
|I^{{\rm rec}}_{r,\n_k}  |
  \leq 
\bbE_\eps \Big( (\zeta^\eps_0 (g_0)) ^2 \Big)^{1/2}
   \;    \bbE_\eps \Big(   \, 
  \mu_\eps \Big(  \hat \Phi^{{\rm rec}}_{N_k} \big( {\bf Z}^\eps_\cN (t_{{\rm stop}} ) \big) \Big)^2  \; 
  \Big)^{1/2} 
+ \mu_\eps^{1/2} \;  \bbE_\eps\left(\Phi^{{\rm rec}}_{N_k}\right) 
 \bbE_\eps\left( \zeta^\eps_0(  g_0)  \indc_{  \Upsilon_\cN^\eps}  \right) .
$$
As in~(\ref{eq: borne finale I}), this can be estimated by Proposition \ref{prop: Quasi-orthogonality estimates} and using~\eqref{eq: borne reste avec puissances en cN}, once
we check that~$\Phi^{{\rm rec}}_{N_k} $ satisfies the assumptions \eqref{eq: quasi orth 1} and \eqref{eq: quasi orth 2} of Proposition \ref{prop: Quasi-orthogonality estimates}. This is the purpose of the following two lemmas.
\begin{Lem}
\label{lem: hyp quasi orth 1bis}
There exists $C>0$ such that for~$d \geq 3$, 
\begin{equation}\label{eq:lem1}
\begin{aligned}
&\sup_{x_{N_k}\in \T^d} \int \big | \Phi^{{\rm rec}}_{N_k}  (Z_{N_k})  \big|  M^{\otimes N_k}(V_{N_k})
\, dX_{N_k-1} dV_{N_k}  \\
&\qquad  \leq C^{N_k }  \|h\|_{L^\infty(\D)}
\delta^{\max (1, n_k^{\rm{rec}})} \tau^{(n_k^0-1)_+} \, (\bbV \theta)^{d+1}\,\theta^{N_{k-1}-1} 
\eps |\log \eps| \;.
\end{aligned}
\end{equation}
\end{Lem}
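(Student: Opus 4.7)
The plan is to mirror the strategy of Lemma~\ref{lem: hyp quasi orth 1}, adding two new ingredients: a sum over the recollision indices $\bk_{N_k}$ (needed to recover injectivity in presence of recollisions), and a geometric estimate quantifying the extra smallness forced by the presence of a recollision on the short interval $(t_{\rm stop},t_{\rm stop}+\delta)$. As a first step I fix the sign data $\bS_{N_k-1}$ (prescribing at each forward collision which particle disappears and whether scattering occurs) together with $\bk_{N_k}\in\{0,\dots,\mathcal K_\gamma\}^{N_k}$, so that the forward reconstruction from $Z_{N_k}$ at time $t_{\rm stop}$ is unambiguous. Exactly as in~\eqref{estimatePhi}, this yields
$$
\big|\Phi^{\rm rec}_{N_k}(Z_{N_k})\big|\leq \|h\|_\infty\,\frac{\mu_\eps^{N_k-1}}{N_k!}\sum_{\bS_{N_k-1},\bk_{N_k}}\sum_{T_\prec\in\cT^\prec_{N_k}}\indc_{\{Z_{N_k}\in\cR_{T_\prec,\bS_{N_k-1},\bk_{N_k},\n_k}\}}\,,
$$
where $T_\prec$ orders the $N_k-1$ creation collisions chronologically in forward time. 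Since $\bk_{N_k}$ takes at most $(\mathcal K_\gamma+1)^{N_k}$ values, the sums over $(\bS_{N_k-1},\bk_{N_k})$ cost only $C^{N_k}$.

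Next I parametrize the indicator using the relative positions $\hat x_i = x_{q_i}-x_{\bar q_i}$ at time $t_{\rm stop}$, exactly as in the proof of Lemma~\ref{lem: hyp quasi orth 1}. Each creation-collision constraint $\indc_{\cB_{T_\prec,i}}$ depends only on $\hat x_1,\dots,\hat x_{i-1}$ and on $V_{N_k}$, and integrates against $d\hat x_i$ to a factor $\leq C\mu_\eps^{-1}(|v_{rel}|+N_k)(\theta-\tau_{i-1})$. Splitting the $N_k-1$ ordered collision times into the groups prescribed by $\n_k$ (of cardinalities $N_{k-1}-1$, $n_k^0$, $n_k^{\rm rec}$ lying respectively in subintervals of length $\theta$, $(r-1)\delta\leq\tau$, $\delta$) yields after ordered time integration the factors $\theta^{N_{k-1}-1}$, $\tau^{n_k^0}$, $\delta^{n_k^{\rm rec}}$, up to factorials that are absorbed as in Lemma~\ref{lem: hyp quasi orth 1}.

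The genuinely new step is the extra geometric cost of the mandatory recollision on $(t_{\rm stop},t_{\rm stop}+\delta)$. Under the velocity cut-off $|V_{N_k}|\leq\bbV$ and the cluster condition, two pre-existing trajectories can recollide only if they belong to the same cluster (initial relative distance $\leq 2\bbV\delta$) and are aligned in an $\eps$-tube at some time $\tau_{\rm rec}\in(t_{\rm stop},t_{\rm stop}+\delta)$. The key input is the geometric estimate from the appendix (valid for $d\geq 3$), which bounds the measure of the set of configurations leading to such a recollision by
$$
C\,\eps\,|\log\eps|\,(\bbV\theta)^{d+1}\,,
$$
the logarithmic loss reflecting the well-known planar-avoidance singularity absent in $d=2$, and $(\bbV\theta)^{d+1}$ coming from integrating the residual geometric parameters and velocities under the cut-off. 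Substituting this bound in place of one of the $\hat x_i$-integrations on the length-$\tau$ subinterval accounts for losing one power of $\tau$ (producing $\tau^{(n_k^0-1)_+}$), while reserving one time integration of length $\delta$ for the recollision time in the case $n_k^{\rm rec}=0$ explains the $\delta^{\max(1,n_k^{\rm rec})}$.

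Finally, integrating $M^{\otimes N_k}(V_{N_k})\indc_{|V_{N_k}|\leq\bbV}$ against the polynomial $(|V_{N_k}|^2+N_k)^{N_k-1}$ via~\eqref{eq: inegalite exponentielle}, and using the $1/N_k!$ prefactor to absorb the combinatorial divergence $N_k^{N_k}$ arising from the sums over $(q_i,\bar q_i)$ and over admissible trees, brings all polynomial and combinatorial factors under control up to a global $C^{N_k}$; the remaining translation invariance lets me fix $x_{N_k}$, yielding the announced bound. The main obstacle is unquestionably the geometric recollision estimate in $d\geq 3$ (deferred to the appendix): identifying which integration to sacrifice for the recollision constraint, and showing that the sacrifice costs exactly $\eps|\log\eps|(\bbV\theta)^{d+1}$ rather than a pure power of $\eps$, is where the dimensional restriction enters. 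The remaining arguments are direct transpositions of Lemma~\ref{lem: hyp quasi orth 1}.
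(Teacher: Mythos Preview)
Your overall strategy is right, but there is a genuine conceptual gap in how you set up the forward tree and extract the recollision gain.

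You describe $T_\prec$ as ordering ``the $N_k-1$ creation collisions chronologically in forward time''. In the presence of recollisions this is not the right object: creations (defined backward) and forward collisions do not coincide. What the paper records in $T_\prec$ are the \emph{clustering collisions} of the forward flow, defined recursively as the first collision connecting two distinct components of the current collision graph. Because the full collision graph has at least $N_k$ edges (the $N_k-1$ creations plus at least one recollision), there is necessarily at least one \emph{non-clustering} forward collision. It is this non-clustering collision, not a direct ``$\eps$-tube alignment at $t_{\rm stop}$'', that carries the extra smallness: it constrains the parent clustering collision(s) (the first deflections of the recolliding pair going backward from $\tau_{\rm rec}$), and Propositions~\ref{recollisionprop} and~\ref{vsingularityprop} quantify this by bounding the corresponding $\hat x_j$ and $\hat x_{j'}$ integrals. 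Without this identification you cannot say which $\hat x_i$-integration is sacrificed, nor justify the sum over parents $j,j'$ (costing $N_k^2$).

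Relatedly, your explanation of the exponents $\delta^{\max(1,n_k^{\rm rec})}$ and $\tau^{(n_k^0-1)_+}$ is incorrect. These powers do not arise because the recollision estimate ``replaces one $\hat x_i$-integration on the $\tau$-subinterval''. They come from counting how many \emph{clustering} collisions fall in each time window: at least $\max(1,n_k^{\rm rec})$ lie in $(t_{\rm stop},t_{\rm stop}+\delta)$ (since even when $n_k^{\rm rec}=0$ the recollision itself becomes the first clustering collision there), and at least $n_k^0+n_k^{\rm rec}$ lie in $(t_{\rm stop},\theta-(k-1)\tau)$. Integrating the ordered times under these constraints yields the stated powers. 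The geometric estimate from the appendix is applied on top of this time integration, not in place of part of it.
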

\begin{Lem}
\label{lem: hyp quasi orth 2bis}
There exists $C>0$ such that, for any $\ell = 1,\dots, N_k $ and for~$d \geq 3$, 
\begin{equation}\label{eq:lem2}
\begin{aligned}
& \sup_{x_{2N_k - \ell}\in \T^d}\int  \big| \Phi^{{\rm rec}}_{N_k}  (Z_{N_k}) \Phi^{{\rm rec}}_{N_k}  (Z_{\ell} , Z_{N_k +1, 2N_k - \ell})   \big|\\
& \qquad\qquad\qquad \times  M^{\otimes (2N_k - \ell)}(V_{{2N_k - \ell}})
\, dX_{{2N_k - \ell-1}}dV_{{2N_k - \ell}} \\
 &\qquad\leq C^{{N_k } } \mu_\eps^{\ell -1}N_k^{-\ell}  \|h\|_{L^\infty(\D)}^2
\; \delta^{\max (1, n_k^{\rm{rec}})} \; \tau^{(n_k^0-1)_+} \,
(\bbV \theta)^{d+1}\,\theta^{2N_{k}-\ell-1-n_k}  
\eps|\log \eps|  \;.
\end{aligned}
\end{equation}
\end{Lem}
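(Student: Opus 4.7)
The strategy will mirror Lemma~\ref{lem: hyp quasi orth 2}, enriched with the recollision analysis already carried out in Lemma~\ref{lem: hyp quasi orth 1bis}. Starting from the definition~\eqref{Phirec-def}, I will exploit the injectivity of the augmented change of variables --- parametrized by the scattering signs $\bS_{N_k-1},\bS'_{N_k-1}$ and the recollision-index vectors $\bk_{N_k},\bk'_{N_k}$ --- to bound the product by
\begin{equation*}
|\Phi^{\rm rec}_{N_k}(Z_{N_k})\, \Phi^{\rm rec}_{N_k}(Z'_{N_k})| \leq \|h\|_\infty^2 \left(\frac{\mu_\eps^{N_k-1}}{N_k!}\right)^2 \sum_{\bS,\bS',\bk,\bk'} \indc_{\{Z_{N_k} \in \cR^{\rm rec}_{\bk,\bS}\}}\,\indc_{\{Z'_{N_k} \in \cR^{\rm rec}_{\bk',\bS'}\}}\, ,
\end{equation*}
with $Z'_{N_k} = (Z_\ell, Z_{N_k+1, 2N_k-\ell})$. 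The number of choices for $(\bk_{N_k},\bk'_{N_k})$ is at most $(1+\cK_\gamma)^{2N_k}$, and will be absorbed into the global constant~$C^{N_k}$.

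Next, I will build a combined ordered graph $T''_\prec$ with $2N_k-\ell-1$ edges, exactly as in Lemma~\ref{lem: hyp quasi orth 2}: the first $N_k-1$ edges will correspond to the ordered sequence of collisions --- including the recollisions --- in the forward pseudo-trajectory of $Z_{N_k}$, while the last $N_k-\ell$ edges will record the successive \emph{clustering} collisions (those connecting two previously disconnected components) occurring in the forward pseudo-trajectory of $Z'_{N_k}$. As in Lemma~\ref{lem: hyp quasi orth 2}, each edge $i$ produces a collision set $\cB_{T''_\prec, i}$ constraining the relative position $\hat x_i = x_{q_i}-x_{\bar q_i}$ to a region of measure $O(|v|(\tau_i-\tau_{i-1})/\mu_\eps)$ that depends only on previously integrated data; the reconstruction of the dynamics within each connected component proceeds as in Lemma~\ref{lem: hyp quasi orth 2}.

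The crucial new input is the recollision constraint (condition~(ii) in the definition of $\cR^{\rm rec}$), which forces at least one edge associated with the pseudo-trajectory of $Z_{N_k}$ to represent a genuine recollision inside the $\delta$-window $(t_{\rm stop},t_{\rm stop}+\delta)$. Under the energy cut-off $|V_{N_k}|\leq\bbV$, that recollisional edge will be handled by the geometric estimates of the Appendix (valid for $d\geq 3$), which replace the usual bound $C|v|\delta/\mu_\eps$ by the much smaller $C(\bbV\theta)^{d+1}\eps|\log\eps|/\mu_\eps$. This is exactly the source of the extra factor $(\bbV\theta)^{d+1}\eps|\log\eps|$ appearing in the statement; it is enough to exploit the recollision in only one of the two pseudo-trajectories, so no additional treatment of the second one is needed.

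Finally, a Fubini-type integration of the remaining $2N_k-\ell-2$ standard collision constraints will produce a factor $(C/\mu_\eps)^{2N_k-\ell-1}\eps|\log\eps|(\bbV\theta)^{d+1}$ together with a polynomial prefactor in $|V_{2N_k-\ell}|$ and $N_k$. Integrating the collision times, using the partial ordering within each tree together with the sampling windows ($n_k^{\rm rec}$ times in the $\delta$-window, $n_k^0$ times in an interval of length at most $\tau$, and $N_{k-1}-1$ times in $(t_{\rm stop}+\tau,\theta)$), will yield the claimed factor $\delta^{\max(1,n_k^{\rm rec})}\tau^{(n_k^0-1)_+}\theta^{2N_k-\ell-1-n_k}$. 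The prefactor $(N_k!)^{-2}$ will absorb both the velocity moments (via~\eqref{eq: inegalite exponentielle}) and the polynomial-in-$N_k$ factors from label sums, up to a multiplicative $C^{N_k}$. I expect the main obstacle to be the geometric estimate on the recollisional edge --- already needed in Lemma~\ref{lem: hyp quasi orth 1bis} and postponed to the Appendix --- which is precisely what enforces the dimensional restriction $d\geq 3$.
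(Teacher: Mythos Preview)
Your overall strategy matches the paper's: combine the two-tree construction of Lemma~\ref{lem: hyp quasi orth 2} with the recollision smallness from Lemma~\ref{lem: hyp quasi orth 1bis}, and exploit the recollision only in the $Z_{N_k}$ pseudo-trajectory. The time and velocity integrations you outline are correct.

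There is, however, a conceptual imprecision in how you describe the recollision mechanism. You write that the first $N_k-1$ edges of $T''_\prec$ are ``the ordered sequence of collisions --- including the recollisions'' and that one of these edges ``represents a genuine recollision''. This is not quite right. In the presence of recollisions the forward flow of $Z_{N_k}$ has \emph{more} than $N_k-1$ collisions; the $N_k-1$ edges you keep are the \emph{clustering} collisions (each connecting two previously disconnected components, hence forming a tree), exactly as in the proof of Lemma~\ref{lem: hyp quasi orth 1bis}. The recollision is a separate, \emph{non-clustering} collision (it would create a cycle) and is not recorded as an edge of $T''_\prec$. Its role is not to replace one edge bound by a smaller one, but to \emph{reinforce} one of the existing clustering constraints: Propositions~\ref{recollisionprop} and~\ref{vsingularityprop} show that the integral $\int \indc_{\cB_{T_\prec,j}}\,d\hat x_j$ for the ``parent'' clustering index $j$ (the last deflection of the recolliding pair before the cycle closes) gains the extra factor $(\bbV\theta)^{d+1}\eps|\log\eps|$. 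Once this picture is corrected, your sketch goes through exactly as in the paper.
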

%Note that, in dimension $d=2$, there would be additional  logarithmic corrections due to the singularity in the recollision constraint (see the appendix).
 Assuming these lemmas are true, let us conclude the proof of Proposition~\ref{proposition - recollisions}.
Thanks to Proposition \ref{prop: Quasi-orthogonality estimates} and using~\eqref{eq: borne reste avec puissances en cN}, there holds
$$
 \begin{aligned}
| I^{{\rm rec}}_{r,\n_k}   |
& \leq 
\Big( C^{{N_k } } \|h\|_{L^\infty(\D)} \|g_0\|_{L^2_M}\Big) \\
& \times \Big[  \eps^\frac12|\log \eps| 
\Big(\eps   \, \theta^{2 (N_{k-1}-1)} +
 \sum_{\ell =1}^{N_k} \theta^{2N_{k}-\ell-1- n_k} \Big)^{1/2}  \delta^{ \frac{1}{2} \max (1, n_k^{\rm{rec}})} \tau^{\frac{1}{2}  (n_k^0 - 1 )_+} \\
&\qquad\qquad\qquad 
  +  \eps |\log \eps|  \theta^{N_{k-1} 
+ \frac12}  \tau^{(n_k^0-1)_+} \;  \bbV ^{\frac{d \gamma}{2}} \;\mu_\eps^{\frac{\gamma}{2} + 1} \, \delta^{\frac{d \gamma-1}{2}+\max (1, n_k^{\rm{rec}})} 
\Big] \, .
\end{aligned}
$$
Using  (\ref{H1}), we get
\begin{equation}
\label{eq: borne finale I rec}
| I^{{\rm rec}}_{r,\n_k}  | 
\leq \eps^\frac12|\log \eps|  \, 
\|h\|_{L^\infty(\D)}  \|g_0\|_{L^2_M}\; ( C \theta) ^{N_{k-1}+n_k/2}  
\delta^{\frac12\max (1, n_k^{\rm{rec}} )} \; \tau^{\frac12 (n_k^0-1)_+}\,  .
\end{equation}
Finally  we are in position 
to sum over all parameters. We find after summation  over~$n_k^0$ and~$n_k^{\rm{rec}}$, then    $r $ (which leads to a factor $\tau/\delta$) and finally ~$(n_j)_{j<k}$ and  $k$,
$$
\left| \int dz_1   G^{\eps,{\rm rec}}_{1} (\theta) h(z_1) \right|
\leq \frac{\tau}{\delta}
\Big( \sum_{k = 1} ^K 2^{k^2} \Big)
( C \theta) ^{2^K}
\delta^\frac12
\eps^\frac12|\log \eps|\,  \|h\|_{L^\infty(\D)}  \|g_0\|_{L^2_M}
\,. $$ 
Now the logarithm of  the right-hand side behaves as
$$
\frac{1}{2} \log \frac{\eps}{\delta}  + 2^K \log (C\theta) \longrightarrow -\infty \,, \quad  \mu_\eps \to \infty,
$$
by the scalings  (\ref{existsa}) to control $\frac{\eps}{\delta}$ (recalling that~$0<a<1$) and \eqref{H2} to bound from above $K= {\theta \over \tau} \leq \frac 12\log|\log\eps|$ for $\mu_\eps$  large enough.
It follows that
$$
\lim_{\mu_\eps \to \infty}\int dz_1   G^{\eps,{\rm rec}}_{1} (\theta)
h(z_1)= 0 
$$ 
which ends the proof of Proposition~\ref{proposition - recollisions}.  \qed

\begin{proof}[Proof of Lemma {\rm\ref{lem: hyp quasi orth 1bis}}]
We shall follow the method of the previous section,  by introducing the set of signs~$\bS _{N_k- 1} =( s_i, \bar s_i) _{1\leq i \leq N_k- 1}$, with $(s_i, \bar s_i)$ characterizing the $i$-th creation  (whether there is scattering or not and which particle remains). Then if~$\bS _{N_k- 1},\bk_{N_k}$ are prescribed, the mapping
$$
\big(a,\sigma,z_1, ( t_i,\omega_i,v_{1+i})_{1 \leq i \leq N_k - 1}\big) 
\longmapsto \left( Z^\e_{  \sigma} (t_{{\rm stop}})\right)
$$
is injective and we infer that
$$\big| \Phi^{{\rm rec}}_{N_k}  (Z_{N_k} ) \big| \leq  \| h \|_\infty 
{\mu_\eps^{N_k-1}\over N_k!}  \sum_{\bk_{N_k}, \bS_{N_k- 1}} 
 \indc_{\{ Z_{N_k} \in \cR_{ \bk_{N_k}, \bS_{N_k- 1}}^{{\rm rec}} \}    }\,. $$
We have defined~$ \cR_{ \bk_{N_k}, \bS_{N_k- 1}}^{{\rm rec}}$ as the set of configurations such that the forward flow compatible with $ \bk_{N_k},\bS _{N_k- 1}$ exists, and with the  constraints respecting the sampling in formula \eqref{eq:G1rec}.

\medskip

Now let us fix~$\bk_{N_k}, \bS_{N_k - 1}$, and evaluate the cost of the constraint that~$Z_{N_k} \in \cR_{\bk_{N_k}, \bS_{N_k-1}}^{{\rm rec}}$.
For this  as previously
 we split the  above sum according to  ordered trees~$ T_\prec=  (q_i, \bar q_i)_{1\leq i \leq N_k-1}$ encoding the ''clustering collisions'': the first  collision in the forward flow is necessarily clustering, say between particles~$q_1$ and~$\bar q_1$ at time~$\tau_1 \in (t_{{\rm stop}}, t_{{\rm stop}} +\delta)$. Clustering collisions are then defined recursively~: the $i$-th clustering collision is the first collision after time~$\tau_{i-1}$ involving two particles which are not in the same connected component of the collision graph $G_{i-1} = (q_j, \bar q_j)_{j \leq i - 1}$. We then denote by $(q_i, \bar q_i)$ the colliding particles and by $\tau_i$ the corresponding collision time. The last clustering collision is between~$q_{N_k-1}$ and~$\bar q_{N_k-1}$ at time~$\tau_{N_k-1} \in (\tau_{N_k-2},\theta)$.  Note that by construction we know that there are at least $\max(1, n_k^{rec})$  clustering collisions in the interval~$(t_{{\rm stop}}, t_{{\rm stop}} +\delta)$, and at least $n_k ^{rec}+ n_k^0$ clustering collisions in the interval~$(t_{{\rm stop}}, t_{{\rm stop}} +\tau)$. This leads to
\begin{equation}\label{estimatePhirec}
\big| \Phi^{{\rm rec}}_{N_k}  (Z_{N_k} ) \big| \leq  \| h \|_\infty 
{\mu_\eps^{N_k-1}\over N_k!}  \sum_{\bk_{N_k}, \bS_{N_k- 1}} \sum_{T_\prec \in \cT^\prec_{N_k}} 
 \indc_{\{ Z_{N_k} \in \cR_{T_\prec, \bk_{N_k}, \bS_{N_k- 1}}^{{\rm rec}} \}    } \, ,
\end{equation}
where $\cR_{T_\prec, \bk_{N_k}, \bS_{N_k- 1}}^{{\rm rec}}$ is the set of configurations such that the forward flow compatible with~$T_\prec,\bk_{N_k}, \bS_{N_k- 1}$ exists,
 and again with the  constraints respecting the sampling in formula~\eqref{eq:G1rec}.

 Notice that, since the  pseudo-trajectories involve recollisions,  the clustering collisions of the forward dynamics do not coincide in general with the creations in the backward dynamics. Furthermore, since the graph encoding all collisions has more than $(N_k - 1)$ edges, there will be at least one non clustering collision in the forward dynamics (see Figure \ref{figure: pseudo-trajectories recoll 1}).

%starting from $i =  1$ and erasing edges of increasing order~$\{q_i,\bar q_i\}$ one by one, at each step we are left with one minimally connected graph. In particular when we erase $\{q_i,\bar q _i\}$, one of the two vertices has to be a leaf (vertex of degree 1).  We can assume that $q_i$ is the leaf, corresponding to the disappearing particle, thus the only choice is at the last step  as any of the two remaining particles can be removed after the last collision.  

 \begin{figure}[h] 
\centering
\includegraphics[width=3.2in]{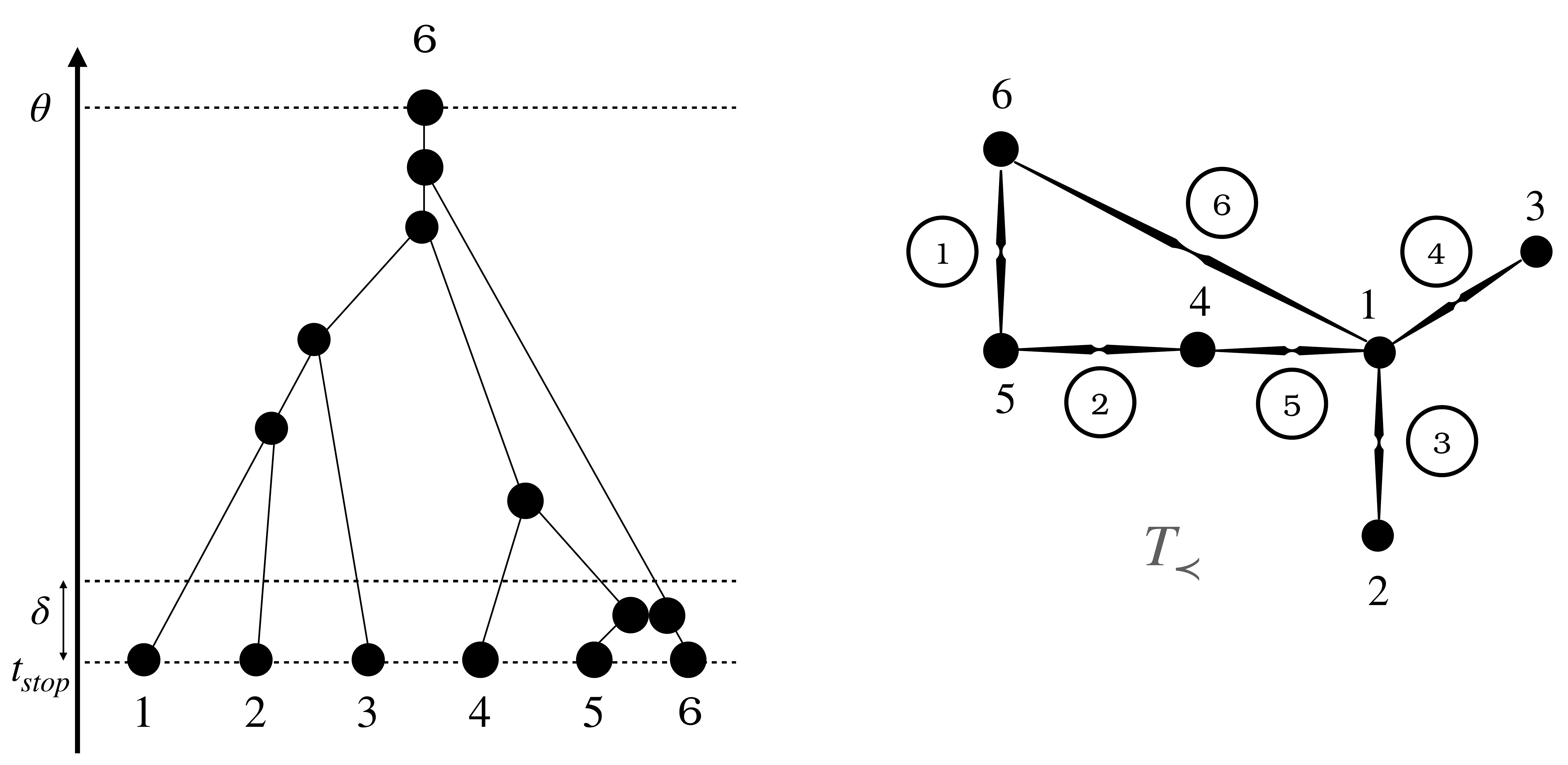} 
\caption{\small 
In the pseudo-trajectory (with $N_k = 6$) represented on the left figure,
a recollision occurs between $5,6$ in the time interval $[t_{{\rm stop}}, t_{{\rm stop}}+ \delta]$.
This recollision induces a cycle in the collision graph $T_\prec$ as shown in the second figure.
The time ordering of the clustering and non clustering collisions is represented by the circled numbers and the edges are added dynamically in $T_\prec$ following the forward dynamics, i.e. starting from $t_{{\rm stop}}$. As a consequence, the recollision between $5,6$ in the backward pseudo-dynamics becomes the first clustering collision in the forward dynamics and the non clustering collision is identified with the edge $(1,6)$ occurring close to time $\theta$.
}
\label{figure: pseudo-trajectories recoll 1}
\end{figure}

\medskip

We proceed exactly as in the proof of Lemma~\ref{lem: hyp quasi orth 1}.
Given  an admissible tree $T_\prec$,  the relative positions $\left(\hat x_s \right)_{s < i}$ and the velocities $V_{N_k}$,   we can vary $\hat x_i$ so that a forward collision at time~$\tau_i \in ( \tau_{i-1},\theta)$ occurs between $q_i$ and $\bar q_i$ and thus define the set~$\cB_{T_\prec, i}   (\hat x_{1}, \dots, \hat x_{i-1}, V_{N_k})$ of measure
$$
|\cB_{T_\prec, 1} | \leq \frac{C}{\mu_\eps} |v_{q_1} - v_{\bar q_1}| 	\delta
$$
and for $i>1$
$$
|\cB_{T_\prec, i} | \leq \frac{C}{\mu_\eps} |v^\e_{q_i}(\tau^+_{i-1}) - v^\e_{\bar q_i}(\tau^+_{i-1})| 	\left(\theta - \tau_{i-1}\right)\,.
$$
The point now is to see that the existence of a non clustering collision reinforces one of these conditions. As can be seen  in the two next propositions proved in Appendix~\ref{geometric estimates}, 
the first non clustering collision, say  at $\tau_{{\rm rec}}$ between $q $ and $q'$,  imposes  strong geometric constraints  on the history of these particles, especially on the  first deflection of the couple~$q,q'$ (moving backward in time).
This first  deflection corresponds to a clustering collision, say the~$j$-th, and we call ``parent" the corresponding  index~$j$.
\begin{Prop}\label{recollisionprop} 
Let~$q$ and~$q'$ be the labels of the two particles involved in the first non clustering collision at time $\tau_{{\rm rec}}$, and denote by~$\tau_j$ the first time of  deflection of~$q$ or~$q'$, moving down from $\tau_{{\rm rec}}$ to $t_{{\rm stop}}$. Assume that~$d \geq 3$. If the non clustering collision is due to space periodicity
$$
 \int \indc _{\mbox{\tiny {\rm Periodic non clustering collision  with parent~$j$}}}\, \indc_{\cB_{T_\prec, j }} \,   d \hat x_j  \leq  \frac C {\mu_\eps^2}( \bbV \theta) ^{d+1} \, .
$$
If the first deflection  involves $q$ and a particle~$c \neq  q'$,   then denoting by~$\bar v_q ,  \bar v_{ q'}$ the velocities at $\tau_{j-1}^+$,
$$
 \int \indc _{\mbox{\tiny {\rm Non clustering collision with parent $j$}}} \, \indc_{\cB_{T_\prec, j }} \, d \hat x_j  
\leq {C\over \mu_\eps}  (\bbV  \theta)^d \bbV \, {\eps |\log \eps|  \over  | \bar v_q - \bar v_{ q'}| } \, .
$$
\end{Prop}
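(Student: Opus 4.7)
The plan is to exploit the fact that, by definition of the parent index $j$, the particles $q$ and $q'$ travel freely (in straight lines) throughout $[\tau_j^+, \tau_{{\rm rec}}^-]$, so that the non-clustering collision at $\tau_{{\rm rec}}$ becomes an explicit algebraic condition on the parameters of the parent clustering collision. First I would change variables from $\hat x_j$ to the standard collision data of the parent collision. Writing $c = \bar q_j$ for the partner of $q$ in the $j$-th clustering collision, the set $\cB_{T_\prec,j}$ is parametrized by the collision time $\tau_j \in (\tau_{j-1}, \theta)$ and the impact direction $\omega \in \mathbb{S}^{d-1}$ (in the pre-collisional hemisphere) through
$$
d\hat x_j \;=\; \eps^{d-1} \, |(\bar v_q - \bar v_c)\cdot\omega|\, d\omega\, d\tau_j.
$$
Post-collision, $q$ acquires the velocity $v_q'(\omega) = \bar v_q - ((\bar v_q - \bar v_c)\cdot\omega)\omega$, while $q'$ keeps $\bar v_{q'}$. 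Setting $y := x^\eps_q(\tau_j^+) - x^\eps_{q'}(\tau_j^+)$ and $s := \tau_{{\rm rec}} - \tau_j \in (0,\theta)$, the non-clustering collision condition reduces to
$$
|y + s(v_q'(\omega) - \bar v_{q'}) - k| \leq \eps
\qquad \text{for some } k \in \bbZ^d,
$$
with $k \neq 0$ in the periodic case and $k = 0$ in the non-periodic one.

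For the periodic case, the energy cutoff $|v^\eps_\bullet| \leq \bbV$ together with $s \leq \theta$ confines $|k|$ to at most $2\bbV\theta$, so at most $O((\bbV\theta)^d)$ shifts are admissible. Using the standard overestimate $\indc_{\exists \tau_{{\rm rec}}} \leq \eps^{-1}|v_q'(\omega) - \bar v_{q'}|\int d\tau_{{\rm rec}}\, \indc$ — obtained by noting that under free flight the relative position remains within an $\eps$-ball only during a time $\sim \eps/|v_q' - \bar v_{q'}|$ — I would turn the existential in $\tau_{{\rm rec}}$ into a genuine integration variable. For each fixed $k$, the resulting $d$ vector equations on the $d+1$ variables $(\omega,\tau_{{\rm rec}})$ generically cut out a set of measure $O(\eps^{d}/|k|^{d-1})$; reinstating the Jacobian $\eps^{d-1} \bbV$ from $d\hat x_j$, integrating $\tau_j$ over $(\tau_{j-1},\theta)$ and summing the lattice series $\sum_{0 < |k| \leq \bbV\theta}|k|^{-(d-1)} = O(\bbV\theta)$ (convergent for $d \geq 3$) then yields the bound $C\mu_\eps^{-2}(\bbV\theta)^{d+1}$.

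For the non-periodic case with $c \neq q'$, the constraint is $|y + s\Delta v(\omega)| \leq \eps$ with $\Delta v(\omega) = (\bar v_q - \bar v_{q'}) - ((\bar v_q - \bar v_c)\cdot\omega)\omega$. I would decompose $\omega = \cos\phi\, \hat u + \sin\phi\, \omega_\perp$ along $\hat u := (\bar v_q - \bar v_c)/|\bar v_q - \bar v_c|$ with $\omega_\perp \in \mathbb{S}^{d-2}$, then integrate successively in $\omega_\perp$, in $\phi$, and in $s$. The transverse sphere $\omega_\perp$ contributes $O(\eps^{d-2})$, and one remaining $\eps$ comes from the tube condition in the $(\phi,s)$ plane; integration of the Jacobian $|(\bar v_q - \bar v_c)\cdot\omega| = |\bar v_q - \bar v_c|\cos\phi$ together with the $(\bbV\theta)^d$ choices of cross-sectional alignment and the bound $\theta$ on the $\tau_j$-integration produces the factor $(\bbV\theta)^d \bbV$. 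The critical $|\log\eps|/|\bar v_q - \bar v_{q'}|$ weight arises in the grazing sub-regime $\cos\phi \to 0$, where the parent deflection vanishes, $\Delta v(\omega) \to \bar v_q - \bar v_{q'}$, and the recollision condition reduces to the free-flight tube condition $|y + s(\bar v_q - \bar v_{q'})| \leq \eps$; splitting $\phi$ dyadically and balancing the tube length against the cross-section along $\bar v_q - \bar v_{q'}$ precisely produces the announced logarithm.

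The principal obstacle is the grazing regime in the non-periodic estimate: as $\cos\phi \to 0$ the parent deflection is too weak to meaningfully reroute $q$, and a naive codimensional count fails. Controlling it rigorously requires the careful dyadic slicing in $\phi$ sketched above, which is the content of Appendix \ref{geometric estimates}. The restriction to $d \geq 3$ enters at two points: in the periodic case it ensures convergence of $\sum_{k \neq 0}|k|^{-(d-1)}$ near $|k| = 1$, and in the non-periodic case it makes the transverse $\omega_\perp$-integration harmlessly $(d-2)$-dimensional; the two-dimensional counterpart would acquire extra logarithmic losses, which is the source of the ``slightly more intricate geometric estimates'' mentioned in the remark following Theorem \ref{thmTCL}.
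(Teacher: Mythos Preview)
Your main gap is the case split. In the paper, ``the non clustering collision is due to space periodicity'' is \emph{precisely} the self-recollision case $c=q'$: the parent clustering collision is between $q$ and $q'$ themselves, and the only way two particles moving freely after a mutual collision can meet again is by wrapping around $\T^d$. Hence \emph{both} particles scatter at $\tau_j$, so your standing assumption ``$q'$ keeps $\bar v_{q'}$'' is false in this case and your periodic analysis is built on the wrong relative velocity. The paper's argument is then much shorter than yours: since $x^\eps_q(\tau_j^+)-x^\eps_{q'}(\tau_j^+)=\eps\omega$ exactly, the recollision equation $\eps\omega+(v_q-v_{q'})(\tau_{\rm rec}-\tau_j)=\eps\omega_{\rm rec}+\zeta$ forces the post-collisional $v_q-v_{q'}$ into a cone of opening $\eps$ around each admissible $\zeta\in\bbZ^d\setminus\{0\}$, and the reflection map $\omega\mapsto v_q-v_{q'}$ then confines $\omega$ to a cap of surface $O(\eps^{d-1})$. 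Summing over the $O((\bbV\theta)^d)$ admissible $\zeta$'s and integrating $\tau_j$ over $(\tau_{j-1},\theta)$ gives the stated bound. Your lattice series with weight $|k|^{-(d-1)}$ is neither needed nor the right mechanism here (the sum has only $O((\bbV\theta)^d)$ terms, so ``convergence for $d\geq 3$'' is beside the point).

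For the case $c\neq q'$, the paper takes a genuinely different route from your $(\phi,\omega_\perp)$ decomposition. It rewrites the recollision constraint as membership of $v_q-\bar v_{q'}$ in a thin cylinder of axis $\bbR(\bar v_q-\bar v_{q'})$ and width $\sim\bbV/\big(|\bar v_q-\bar v_{q'}|\,|\delta\tau_j|\big)$, where $\delta\tau_j$ is a rescaled time-like parameter ranging up to order $\eps^{-1}$. Since the scattering law places $v_q$ on the sphere of diameter $[\bar v_q,\bar v_c]$, the admissible $\omega$ lie in at most two spherical caps of solid angle $\lesssim(\eta/R)^{(d-1)/2}$ ($\eta$ the cylinder width, $R$ the sphere radius), which may be weakened to exponent $1$. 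The factor $|\log\eps|$ then comes from integrating $1/|\delta\tau_j|$ over $|\delta\tau_j|\in[2/|\bar v_q-\bar v_{q'}|,\,C/\eps]$ --- not from a grazing regime $\cos\phi\to 0$ --- and the singular $1/|\bar v_q-\bar v_{q'}|$ comes from the complementary region $|\delta\tau_j(\bar v_q-\bar v_{q'})|<2$ where no angular constraint is extracted at all. The restriction $d\geq 3$ enters only because this cylinder--sphere argument with exponent $(d-1)/2$ replaced by $1$ still yields the needed $\eps$; in $d=2$ it would give only $O(\eps^{1/2})$, which is why the paper remarks that an additional parent would have to be tracked there.
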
 

It finally remains to  eliminate the singularity~$1/ | \bar v_{ q} - \bar v_{q'}|  $, using the next deflection moving backward. Note that this singularity arises only  if  the first non clustering recollision is not a self-recollision, which ensures that the recolliding particles have at least two deflections before the non clustering collision in the forward flow.
\begin{Prop}\label{vsingularityprop}
 Let~$q$ and~$q'$ be the labels of two   particles  with velocities~$v_{q}$ and~$v_{q'}$, and denote by~$\tau_j$ the time of the first deflection of~$q$ or~$q'$ moving down to $t_{{\rm stop}}$.    Assume that~$d \geq 3$. Then,
$$
\int   \frac{\indc_{\cB_{T_\prec, j }} }{ |v_{ q}-v_{ q'}|}   \,  d\hat x_j \leq    \frac C {\mu_\eps} \,  
\big( \delta \indc_{j = 1} +  \theta \indc_{j \not =  1} \big)   \,.
% \big( \delta \indc_{\tau_j \in [t_{{\rm stop}}, t_{{\rm stop}}+\delta] } +  \theta \indc_{\tau_j > t_{{\rm stop}}+\delta } \big)   \,.
$$
\end{Prop}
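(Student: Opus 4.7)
The plan is to parametrize the collision set $\cB_{T_\prec,j}$ by the collision time $\tau_j$ and the scattering direction $\omega \in S^{d-1}$. As in the proof of Lemma~\ref{lem: hyp quasi orth 1}, this yields the change of variables $d\hat x_j = \eps^{d-1}(\hat v^- \cdot \omega)_+\, d\tau_j\, d\omega$ where $\hat v^- = v^\eps_{q_j}(\tau_j^-) - v^\eps_{\bar q_j}(\tau_j^-)$ is the pre-collision relative velocity. The key observation is that the quantity $|v_q - v_{q'}|$ depends only on $\omega$ and on the dynamics up to $\tau_j^-$, but not on $\tau_j$ itself, since $v_q, v_{q'}$ are the velocities right after the $j$-th deflection, which remain constant until the next event. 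Consequently, the $\tau_j$-integration factorizes and contributes a length of order $\delta$ (when $j=1$, since $\tau_j \in (t_{\rm stop}, t_{\rm stop}+\delta)$) or at most $\theta$ (when $j > 1$). The problem is thus reduced to the uniform angular estimate
$$\int_{S^{d-1}} \frac{(\hat v^- \cdot \omega)_+}{|v_q - v_{q'}|}\, d\omega \leq C.$$

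Since $\tau_j$ is the first deflection of $q$ or $q'$, one of them (say $q = q_j$) participates in the $j$-th collision. Consider the generic sub-case $q' \neq \bar q_j$, so that $v_{q'}$ is unaffected by the scattering. Then $v_q = v^\eps_{q_j}(\tau_j^-) - (\hat v^- \cdot \omega)\omega$, and as $\omega$ ranges over the hemisphere $\{\hat v^- \cdot \omega > 0\}$, the post-collision velocity $v_q$ traces bijectively a sphere $\Sigma \subset \bbR^d$ of radius $|\hat v^-|/2$ centered at $c = (v^\eps_{q_j}(\tau_j^-) + v^\eps_{\bar q_j}(\tau_j^-))/2$. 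Writing $v_q = c + (|\hat v^-|/2)\omega'$ for $\omega' \in S^{d-1}$, a direct computation gives the Jacobian relation
$$(\hat v^- \cdot \omega)_+ \, d\omega = \frac{|\hat v^-|}{2^{d-1}}\, (e \cdot \omega)^{3-d}\, d\omega', \qquad e := \hat v^-/|\hat v^-|,$$
so that the angular integral is transformed into a surface integral on $S^{d-1}$ of a Newtonian-type kernel. Invoking the classical shell estimate (for $d=3$, $\int_{S^2} d\omega'/|A + r\omega'| \leq C/\max(|A|,r)$, with the appropriate weighted generalization in higher $d$) and combining with the velocity cutoff $|V_{N_k}| \leq \bbV$ gives the desired uniform bound, independent of the background parameters. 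The remaining sub-cases (where $q' = \bar q_j$, or $q = \bar q_j$, etc.) are handled by the same change of variable, as $v_q - v_{q'}$ always depends on $\omega$ through the same quadratic structure inherited from the scattering law.

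The main obstacle is the $d$-dependent Jacobian factor $(e\cdot\omega)^{3-d}$, which is singular at $\omega$ parallel to $\hat v^-$ when $d \geq 4$; equivalently, the $v_q$-integrand acquires a factor $|e - \omega'|^{3-d}$ near $\omega' = e$. The condition $d \geq 3$ is precisely what is needed: on the $(d-1)$-dimensional sphere $\Sigma$, the combined singularity of the Newtonian kernel $1/|v_q - v_{q'}|$ and of the Jacobian factor remains integrable with a uniform constant, since $d - 1 \geq 2$ and both singularities are of codimension-$1$ or lower order. A mild additional subtlety arises from the exclusion of self-recollisions, which ensures that the parent collision at $\tau_j$ is not simultaneously an interaction of $q$ with $q'$: this guarantees that $v_q$ and $v_{q'}$ vary independently through the scattering parameters, so that the change of variable $\omega \mapsto v_q$ is well-defined and not degenerate.
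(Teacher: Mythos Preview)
Your strategy is valid but more involved than the paper's. Both proofs begin with the parametrization $d\hat x_j = \eps^{d-1}|\hat v^-\cdot\omega_j|\,d\tau_j\,d\omega_j$ and factor out the time integral; the question is then the angular bound. The paper treats two cases. If the $j$-th collision is between $q$ and $q'$, then $|v_q-v_{q'}|=|\bar v_q-\bar v_{q'}|$ since elastic scattering preserves the relative speed, and the integrand $|(\bar v_q-\bar v_{q'})\cdot\omega_j|/|\bar v_q-\bar v_{q'}|$ is trivially bounded by $1$ --- so this case is \emph{not} ``handled by the same change of variable'' as you claim, but is in fact easier and qualitatively different. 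If instead the collision is between $q$ and some $c\neq q'$, the paper writes $v_q-v_{q'}=u-(w\cdot\omega_j)\omega_j$ (with $u=\bar v_q-\bar v_{q'}$, $w=\bar v_q-\bar v_c$), identifies the single possible zero $\omega_0=u/|u|$ of the denominator, Taylor-expands in $\eta=\omega_j-\omega_0\perp\omega_0$ to obtain the local bound $|w\cdot\omega_j|/|u-(w\cdot\omega_j)\omega_j| \leq C/|\eta|$, and concludes by the integrability of $|\eta|^{-1}$ against the spherical measure $|\eta|^{d-2}\,d|\eta|$ for $d\geq3$. This avoids your passage to the sphere $\Sigma$, the computation of the $\omega\mapsto\omega'$ Jacobian, and the appeal to a shell estimate together with its ``weighted generalization'' --- which you do not actually carry out and which, for $d\geq4$, requires controlling the interaction of the two singularities uniformly in the position of the off-center point.

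Two minor inaccuracies worth noting. The Jacobian factor $(e\cdot\omega)^{3-d}$ is singular where $\omega\perp\hat v^-$, not parallel (your $\omega'$-description at $\omega'=e$ is however correct, via $|e-\omega'|=2|e\cdot\omega|$). And your final sentence is confused: the case $(q_j,\bar q_j)=(q,q')$ is not excluded in this proposition and is exactly the paper's first case; ``self-recollision'' in the surrounding text refers to a periodic recollision, which is treated separately in Proposition~\ref{recollisionprop} and plays no role here.
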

Note that if the first deflection ($j = 1$) occurs in the time interval $ (t_{{\rm stop}}, t_{{\rm stop}}+\delta)$, then the estimate is strengthened by a factor $\delta$.

\medskip
Combining both propositions and summing over all possible parents $j,j'$ of the first non clustering collision, we get
$$\begin{aligned} 
& \sum_{T_\prec \in \cT^{\prec}_{N_k}}\int d\hat X_{N_k-1} 
\prod_{i=1}^{N_k-1} \indc_{\cB_{T_\prec, i }} \leq \sum_{j,j'  {\tiny\hbox{parents}}} 
\sum_{T_\prec \in \cT^{\prec}_{N_k}} \int d\hat x_1
 \indc_{\cB_{T_\prec, 1 }} \int d\hat x_{ 2}\,  \dots\int d\hat x_{N_k-1} \indc_{\cB_{T_\prec, N_k-1}}\\   
& \quad  \leq \left( \frac{C}{\mu_\eps}\right)^{N_k-1}  (\bbV  \theta)^{d+1} N_k^2  \left( V_{N_k}^2 + N_k\right)^{N_k-1} N_k^{N_k-1}
 \int_{t_{{\rm stop}}}^{t_{{\rm stop}} +\delta } d\tau_{ 1}\, \dots \int_{\tau_{N_k-2} }^{\theta} d \tau_{N_k-1} 
\eps |\log \eps| \, \indc_{\n_k}
\end{aligned}$$
recalling that $\indc_{\n_{k}}$ is the constraint on times respecting the sampling in formula \eqref{eq:G1rec}.
Integrating over that simplex in time, and with respect to the Gaussian measure in velocity leads to the expected estimate.
Lemma~\ref{lem: hyp quasi orth 1bis} is proved.
\end{proof}

\begin{proof}[Proof of Lemma~{\rm\ref{lem: hyp quasi orth 2bis}}]
The proof combines arguments from the proofs of Lemmas \ref{lem: hyp quasi orth 2} and \ref{lem: hyp quasi orth 1bis}.
Our starting point is the estimate
\begin{equation}
\label{estimatePhirec}
\big| \Phi^{{\rm rec}}_{N_k}  (Z_{N_k} ) \big| \leq  \| h \|_\infty 
{\mu_\eps^{N_k-1}\over N_k!}  \sum_{\bk_{N_k}, \bS_{N_k- 1}} 
 \indc_{\{ Z_{N_k} \in \cR_{ \bk_{N_k}, \bS_{N_k- 1}}^{{\rm rec}} \}    }\,. 
\end{equation}
Let us fix  two families $(\bk_{N_k}, \bS_{N_k-1})$ and $(\bk_{N_k}', \bS_{N_k-1}')$ and consider a configuration $Z_{2N_k - \ell}$ such  that   $Z_{N_k}\in \cR_{\bk_{N_k}, \bS_{N_k-1}}^{{\rm rec}}$ and   $Z'_{N_k}= (Z_{\ell} , Z_{N_k+1, 2N_k - \ell}) \in \cR_{\bk_{N_k}', \bS_{N_k-1}'}^{{\rm rec}}$ .

We consider the forward flows of each set of particles~$Z_{N_k}$ and~$Z'_{N_k} $ starting
 at time~$t_{{\rm stop}}$. Both dynamics evolve independently and each one of them should have at least one non clustering collision.  As in the proof of Lemma~\ref{lem: hyp quasi orth 1bis} we denote by~$T_\prec$ the ordered collision tree corresponding to the clustering collisions of~$Z_{N_k}$, and by~$(\tau_i)_{1 \leq i \leq N_k-1}$ and~$(\hat x_i)_{1 \leq i \leq N_k-1}$ the  collision times and relative positions. 
Note that the non clustering collision on the dynamics of $Z_{N_k}$ reinforces one of the clustering constraint.

Starting from  this ordered  minimally connected tree~$T_\prec$ with $N_k$ vertices, we  construct an ordered  minimally connected graph with $2N_k - \ell $ vertices with the same procedure as in the proof of Lemma  \ref{lem: hyp quasi orth 2}. The  edges~$(q_i, \bar q_i  )_{N_k \leq i \leq 2N_k-\ell}$     are added by keeping only the ``clustering collisions" in the forward dynamics of $Z'_{N_k}$~:
\begin{itemize}
 \item The first clustering collision is the first collision in the forward flow of~$Z'_{N_k}$ involving at least one particle with label in~$[N_k+1,   2N_k-\ell]$.
  We denote  by~$(q_{N_k }, \bar q_{N_k })$   the  labels of the colliding particles and by~$\tau_{N_k }$  the corresponding   colliding time. We also define the ordered graph~$ G_{N_k} = (q_j,  \bar q_j)_{1\leq j \leq N_k}$;
  \item
  for~$N_k +1 \leq i \leq 2N_k-\ell-1$, the $i$-th clustering collision is the first collision (after $\tau_{i-1}$) in the forward flow of~$Z'_{N_k}$ involving two particles which are not  in the same connected component of the graph $G_{i-1}$. We denote  by~$(q_{i }, \bar q_{i })$   the  labels of the colliding particles and by~$\tau_{i }$  the   corresponding colliding time.   We also define the ordered graph~$ G_{i} = (q_j,  \bar q_j)_{1\leq j \leq i}$. 
   \end{itemize}
 By this procedure we end up with a tree~$T''_\prec := (q_i, \bar q_i  )_{1\leq i \leq 2N_k-\ell - 1}$ with no cycles (nor multiple edges).
 We define as above the relative positions~$ \hat x_i:=x_{q_i}-x_{\bar q_i}$.
 
 Necessary conditions to have  $Z_{N_k}\in \cR_{\bk_{N_k}, \bS_{N_k-1}}^{{\rm rec}}$ and   $Z'_{N_k}\in \cR_{\bk_{N_k}', \bS_{N_k-1}'}^{{\rm rec}}$ can be expressed recursively in terms of the collision sets~$(\cB_{T''_\prec, i})_{1 \leq i \leq 2N_k-\ell-1}$~:
 \begin{itemize}
  \item the sets   $\cB_{T''_\prec, i}$ only depend  on~$\hat x_{1}, \dots, \hat x_{i-1}, V_{2N_k-\ell}$ for any $i \leq 2N_k- \ell - 1$ (see Lemma \ref{lem: hyp quasi orth 2});
 \item One set of $(\cB_{T''_\prec, i})_{1 \leq i \leq N_k-1}$ has some extra smallness due to the existence of a non clustering collision in the dynamics of $Z_{N_k}$ 
 (see Lemma \ref{lem: hyp quasi orth 1bis}).
\end{itemize}
 We therefore end up with the estimate
$$
\begin{aligned}
\sum_{T''_\prec }\int & d\hat X_{2N_k-\ell - 1}  dV_{2N_k - \ell} M^{\otimes (2N_k - \ell)}
\prod_{i=1}^{2N_k-\ell -1} \indc_{\cB_{T''_\prec, i }}\\
&
  \leq 
\left( \frac{C}{\mu_\eps}\right)^{2N_k-\ell - 1} (\bbV  \theta)^{d+1}  \delta^{\max (1, n_k^{\rm{rec}})} \; \tau^{(n_k^0-1)_+}  \;
\theta^{2N_{k}-\ell - 1-n_k}  \;
\eps|\log \eps|  (N_k)^{2N_k-\ell}. 
\end{aligned}
$$
Summing over all possible $(\bk_{N_k}, \bS_{N_k-1})$ and $(\bk_{N_k}', \bS_{N_k-1}')$, we obtain the expected estimate.  Lemma~\ref{lem: hyp quasi orth 2bis} is proved.
\end{proof}

%%%%%%%%%%%%%%%%%%%%%%%%%%%%%%%%%%%%%%%%%%%%%%%%
%%%%%%%%%%%%%%%%%%%%%%%%%%%%%%%%%%%%%%%%%%%%%%%%

\section{Conclusion of the proof: convergence results}\label{section - principal}

\setcounter{equation}{0}

\subsection{Restricting the initial measure}
\label{section - conditioning}
 
This section is devoted to the proof of Proposition~\ref{proposition - conditioning}. 

To prove \eqref{eq: complementaire Upsilon},
 we evaluate the occurence of a cluster of size larger than $\gamma$ under the equilibrium measure. This can be estimated by considering the event that  $\gamma +1$ particles are located in a ball of 
radius~$ \gamma  \; \bbV \delta$ 
\begin{align*}
%\label{eq: intermediaire Upsilon}
\bbP_\eps \big( \text{there is a cluster larger than $\gamma$ at time 0} \big) 
& \leq 
\bbE_\eps \left(  \sum _{(i_1,\dots, i_{\gamma + 1} )} 
%\frac{1}{(\gamma + 1)!}  
\indc_{ \{ \text{$i_1,\dots, i_{\gamma + 1}$ are in a cluster} \}  }
\right)\\
& \leq  \mu_\eps^{\gamma +1} \big( \gamma \; \bbV \delta \big)^{d \gamma}\, .
\end{align*}
In the set $^c \Upsilon^\eps_\cN$ a cluster should appear (at least) at one of the $\theta/\delta$ time steps. Using a union bound, this completes \eqref{eq: complementaire Upsilon}.

\medskip

Let us now note that the measure restricted to   
$\Upsilon^\eps_\cN$ can be decomposed as 
\begin{equation*}
 \bbE_\eps \left( \zeta^\eps_0 (g_0) \indc_{ \Upsilon^\eps_\cN} \right) = 
\bbE_\eps \left( \zeta^\eps_0 (g_0) \right)  - \bbE_\eps \left( \zeta^\eps_0 (g_0)  \indc_{^c \Upsilon^\eps_\cN} \right) = - \bbE_\eps \left( \zeta^\eps_0 (g_0)  \indc_{^c \Upsilon^\eps_\cN} \right) ,
\end{equation*}
where we used that $\bbE_\eps \left( \zeta^\eps_0 (g_0) \right) = 0$.
%\eqref{eq: mean g0} as $g_0$ satisfies \eqref{eq:mn}.
Applying the Cauchy-Schwarz inequality, we get by \eqref{eq: moment ordre 2} and 
\eqref{eq: complementaire Upsilon} that
\begin{equation}
\label{eq: mesure tilde}
\begin{aligned}
\Big|  \bbE_\eps \left( \zeta^\eps_0 (g_0) \indc_{ \Upsilon^\eps_\cN} \right) \Big|
\leq  \bbE_\eps \big(   \zeta^\eps_0 (g_0)^2 \big)^{1/2}  
\bbP_\eps \big(  ^c \Upsilon^\eps_\cN \big) ^{1/2}
\leq    C_\gamma \|  g_0\|_{L^2_M}  \; 
\theta^{\frac12} \;  \bbV ^{\frac{d \gamma}{2}} \;\mu_\eps^{\frac{\gamma +1}{2}} \, \delta^{\frac{d \gamma-1}{2}}\, .
\end{aligned}
\end{equation}
This completes  \eqref{eq: borne reste avec puissances en cN}.
%Similarly for any~$N \geq 1$
%$$
%\begin{aligned}
%\Big|  \bbE_\eps\left( \frac{\cN!}{(\cN-N)!}  \zeta^\eps_0(  g_0)  \indc_{ \Upsilon^\eps_\cN}  \right) \Big|
%& \leq 
% \bbE_\eps \big( \cN^{4 N} \big)^{1/4} \;
% \bbE_\eps \big( \zeta^\eps_0(  g_0)^2   \big)^{1/2} \;
%\bbP_\eps \big(  ^c \Upsilon_\cN^\eps  \big)^{1/4}\\
%&  \leq 
% C _\gamma \|  g_0\|_{L^2_M}   \; \theta^{\frac14} \; \bbV ^{\frac{d \gamma}{4}} \;\mu_\eps^{\frac{\gamma +1}{4} + N} \, \delta^{\frac{d \gamma-1}{4}}\, ,
%\end{aligned}
%$$
%which proves \eqref{eq: borne reste avec puissances en cN}.

\medskip

To conclude the proof of Proposition~\ref{proposition - conditioning}, let us recall the
definitions
\begin{align}
& {}^c \widetilde G^{\eps }_{1}(\theta) =\sum_{m\geq0}   Q^\eps_{1,m+1}(\theta)   \, ^c  \tilde  G^{\eps0}_{m+1} \nonumber    \\
& G^{\eps,{\rm clust} }_1(\theta) =  \sum _{( n_k \leq 2^k)_{k\leq K} } Q^{\eps0} _{1, n_1} (\tau) \dots Q^{\eps0}_{N_{K-1}, N_K} (\tau) \,     ^c \tilde G^{\eps0}_{N_K} \;.\label{eq:G1ecldef}
\end{align}
We start by proving that  if 
%$$
% \lim_{\mu_\eps \to \infty} \theta  \, \mu_\eps^{\gamma+1} \;  \delta^{d \gamma -1} \;\bbV^{d \gamma }   = 0
% $$
% then
 \begin{equation}
\label{eq: borne G clust1}
 \lim_{\mu_\eps \to \infty} \theta  \, \mu_\eps^{\gamma+1} \;  \delta^{d \gamma -1} \;\bbV^{d \gamma }   = 0
 \quad \text{then} \quad 
\lim_{\mu_\eps \to \infty}  \int dz_1  {}^c \widetilde G^{\eps }_{1}(\theta,z_1) h(z_1)  
=0\,.
\end{equation}We rewrite the integral in terms of an expectation
and then use the H\"older inequality
\begin{align*}
 \int dz_1 {}^c \widetilde G^{\eps }_{1}(\theta,z_1) h(z_1) & = 
 \bbE_\eps \big(  \indc_{^c \Upsilon^\eps_\cN} \zeta^\eps_0 (g_0) \zeta_\theta^\eps (h)  \big)\\
 & \leq  \bbP_\eps \big(  ^c \Upsilon^\eps_\cN \big)^{1/4}
\; \bbE_\eps \big(   \zeta^\eps_0 (g_0)^2 \big)^{1/2} 
\;  \bbE_\eps \big(   \zeta_0^\eps (h)^4  \big)^{1/4}\, .
\end{align*}
Recall that $h$ is in $L^\infty$.
Combining \eqref{eq: complementaire Upsilon} with the bounds in Proposition~\ref{Proposition - estimates on g0} on the moments of the fluctuation field, we get
\begin{align*}
 \int dz_1 {}^c \widetilde G^{\eps }_{1}(\theta,z_1) h(z_1) 
 \leq  C  \; \|g_0\|_{L^2_M}\;   
\theta^{1/4} \;
\mu_\eps^{ \frac{\gamma+1}4 } \;  \delta^{ \frac{d \gamma -1}4} \; \bbV^{ \frac{d \gamma }4} \, .
\end{align*}
This completes \eqref{eq: borne G clust1}.  
 
 \medskip
 
We turn now to proving that under~(\ref{H1}),
 \begin{equation}
\label{eq: borne G clust2}
\lim_{\mu_\eps \to \infty}  \int dz_1 G^{\eps,{\rm clust} }_1(\theta,z_1) h(z_1)  
=0\,.
\end{equation}
Proceeding as in \eqref{eq:hatPhidef 2}-\eqref{eq:hatPhidef 2 bis}, we get
\begin{align*}
& \int dz_1 G^{\eps,{\rm clust} }_1(\theta,z_1) h(z_1)\\
& \qquad = \sum_{\n_k} \bbE_\eps\Big( \mu_\eps^{1/2} \; \hat \Phi_{N_K} \big( {\mathbf Z}^\eps_\cN (0) \big) \;
  \zeta^\eps_0(  g_0) \; \indc_{^c  \Upsilon_\cN^\eps}  \Big)
 + \sum_{\n_k} \mu_\eps^{1/2} \;  \bbE_\eps\left(\Phi_{N_K}\right) 
 \bbE_\eps\left( \zeta^\eps_0(  g_0)  \indc_{^c  \Upsilon_\cN^\eps}  \right)\, ,
\end{align*}
where, according to \eqref{eq:G1ecldef}, $\Phi_{N_K} $ is conditioned on the sampling $\n_k$ with sub-exponential trees and no recollisions in $(0,\theta)$.
Applying the H\"older inequality to bound the first term and Cauchy-Schwarz for the second term leads to
$$
\begin{aligned} 
\left|  \int dz_1 G^{\eps,{\rm clust} }_1(\theta,z_1) h(z_1)  \right| 
  \leq&  \bbP_\eps \big(  ^c \Upsilon^\eps_\cN \big)^{1/4}
\; \bbE_\eps \big(   \zeta^\eps_0 (g_0)^4 \big)^{1/4} \;
\sum_{\n_k} \bbE_\eps\Big( \mu_\eps \; \Big( \hat \Phi_{N_K} \big( {\mathbf Z}^\eps_\cN (0) \big) \Big)^2 
\Big)^{1/2}\\
& +\bbP_\eps \big(  ^c \Upsilon^\eps_\cN \big)^{1/2}\; \bbE_\eps \big(   \zeta^\eps_0 (g_0)^2 \big)^{1/2}\;
 \sum_{\n_k} \mu_\eps^{1/2} \;  \bbE_\eps\left(\Phi_{N_K}\right)  \;
%\theta^{\frac12} \;  \left(\gamma\bbV\right) ^{\frac{d \gamma}{2}} \;\mu_\eps^{\frac{\gamma +1}{2}} \, \delta^{\frac{d \gamma-1}{2}}
\, . \nonumber 
\end{aligned}
$$

Since $g_0$ belongs to $L^\infty$, the moments of the fluctuation field can be bounded by Proposition \ref{Proposition - estimates on g0}.
Thus the previous  term is   estimated as in~(\ref{eq: borne finale I}) and we find thanks to~(\ref{eq: complementaire Upsilon}) and \eqref{H1}
\begin{eqnarray}
 \left|  \int dz_1 G^{\eps,{\rm clust}}_1(\theta,z_1) h(z_1)  \right|  &\leq 
\theta^{\frac{1}{4}} \;
\mu_\eps^{ \frac{\gamma+1}4 } \;  \delta^{ \frac{d \gamma -1}4} \; \bbV^{ \frac{d \gamma }4} 
 \sum_{( n_k \leq 2^k)_{k\leq K} } ( C \theta )^{N_K}\nonumber \\
 & \leq \theta^{\frac{1}{4}} \;
\mu_\eps^{ \frac{\gamma+1}4 } \;  \delta^{ \frac{d \gamma -1}4} \; \bbV^{ \frac{d \gamma }4} \,2^{K^2} 
( C \theta )^{2^{K+1}}\;.\nonumber
\end{eqnarray}
Using the scaling \eqref{H2} of $K = \theta / \tau$,
this concludes the proof of Proposition~\ref{proposition - conditioning}.   
\qed

%%%%%%%%%%%%%%%%%%%%%%%%%%%%%%%%%%%%%%%%%%%%%%%%
%%%%%%%%%%%%%%%%%%%%%%%%%%%%%%%%%%%%%%%%%%%%%%%%

\subsection{Control of large velocities}\label{section - high energies}

Let us prove Proposition~\ref{proposition - high energies}. Recall that
$R = \tau / \delta$ and 
 $$
 \begin{aligned}
&  G^{\eps,{\rm vel}} _1 (\theta ) 
:=  \sum_{k = 1} ^{K } \sum _{( n_j \leq 2^j)_{j\leq k-1 } }  \sum_{ r = 1}^R  \sum_{n_k\geq 0    }   \sum_{   n_k^0 + n_k^{\rm{rec}}=n_k}Q^{\eps0} _{1, n_1} (\tau) \dots Q^{\eps0}_{N_{k-2}, N_{k-1} } (\tau)   \\
 &  \qquad \circ Q^{\eps0} _{N_{k-1},N_{k-1} +n_k^{0}  }( (r-1) \delta)Q^{\rm rec } _{N_{k-1} +n_k^0 , N_{k-1} +n_k^0 + n_k^{\rm{rec}}   } ( \delta)  \widetilde G^{\eps}_{N_k} (\theta- (k-1)  \tau - r\delta   )  \indc_{|V_{N_k} | >\mathbb V} \, .
  \end{aligned}$$
  Let us write (recalling that $Z^\e_{N_k}(t)$ denotes the coordinates of  the  pseudo-particles at time~$t$)
  $$
   \begin{aligned}
& \int dz_1 h(z_1) Q^{\eps0} _{1, n_1} (\tau) \dots Q^{\eps0}_{N_{k-2}, N_{k-1} } (\tau)  Q^{\eps0} _{N_{k-1},N_{k-1} +n_k^{0}  }( (r-1) \delta)  \\
 &  \qquad \circ Q^{\rm rec } _{N_{k-1} +n_k^0 , N_{k-1} +n_k^0 + n_k^{\rm{rec}}   } ( \delta)  \widetilde G^{\eps}_{N_k} (\theta- (k-1)  \tau - r\delta   )  \indc_{|V_{N_k} | >\mathbb V }\\
&   =\sum_{a \in \cA^\pm_{1,N_k-1}} \int_{\mathcal P_a^{\rm vel}}  h(z_1)\widetilde G^\eps_{N_k} \left(\theta- (k-1) \tau-r\delta ,Z_{N_k}^\e(\theta- (k-1) \tau-r\delta)\right)\\
& \qquad \qquad \times \prod_{i=1}^{N_k-1} \big((
v_{1+i}-v^\e_{a_i}(t_i^+))\cdot \omega_i
\big)_+ dt_id\omega_i
dv_{1+i} \, dz_1 \, , \end{aligned}$$
where~$\mathcal P_a^{\rm vel} $ is the subset of~$\D \times ((0,\theta) \times {\mathbb S}^{d-1} \times \mathbb \R^d)^{N_k-1}$ such that for any~$z_1, ( t_i,\omega_i,v_{1+i})_{1 \leq i \leq N_k-1} $ in~$\cP_a^{\rm vel} $, the associate backward trajectory is well defined and
 \begin{itemize}
 \item[(i)] there are $n_j$ particles added on the time intervals $(\theta- j \tau, \theta- (j-1) \tau)$
 for $j < k$,~$n_k^0$ particles added on $\big(\theta - (k-1) \tau- (r-1) \delta, \theta - (k-1)\tau \big)$ and $n_k^{\rm{rec}}$ particles added on~$\big(\theta - (k-1) \tau- r\delta, \theta - (k-1)\tau- (r-1) \delta\big)$;
 \item[(ii)] the pseudo-trajectory involves no recollision on $\big(\theta - (k-1) \tau- (r-1) \delta, \theta \big)$;
\item[(iii)] at time $\theta - (k-1) \tau- r\delta$, the total velocity satisfies $|V_{N_k} | >\mathbb V$.
 \end{itemize}

We notice that for all times
$$
|\widetilde G^\eps_{N_k} (t,Z_{N_k})| \leq C \mu_\eps M^{\otimes {N_k}}(V_{N_k})\|g_0\|_{L^\infty}
$$
so we find after integration, taking~$\mathbb V =  |\log \eps|$
$$
\begin{aligned}
\sum_{a \in \cA^\pm_{1,N_k-1}}&  \int_{\mathcal P_a^{\rm vel}}  h(z_1)
\widetilde G^\eps_{N_k} \left(\theta- (k-1) \tau-r\delta ,Z_{N_k}^\e(\theta- (k-1) \tau-r\delta)\right)\\
& \qquad \qquad\qquad \times \prod_{i=1}^{N_k-1} \big((
v_{1+i}-v^\e_{a_i}(t_i^+))\cdot \omega_i
\big)_+ dt_id\omega_i
dv_{1+i}  \,dz_1 \\
&\qquad\leq (C\theta)^{N_{k-1} } \|g_0\|_{L^\infty} \, \|h\|_{L^\infty} \, \mu_\eps (C\tau)^{ n_k^0} (C\delta)^{ n_k^{\rm{rec}}} \exp \left( - \frac{1} 4 |\log \eps|^2 \right) \,.
\end{aligned}$$
Since $\tau\ll1$ and $\delta \ll 1$, summing with respect to $n_k^0$ and $n_k^{\rm{rec}}$, then  to $r $ and finally  with respect to $(n_j)_{j<k}$ and  $k$, we get
\begin{equation}
\label{v-estimate-1}
 \int dz_1\   G^{\eps,{\rm vel}}_1(\theta,z_1) h(z_1) \leq 
 \left( \sum_{k=1} ^K 2^{k^2}  \right)R (C\theta)^{2^K} \exp \left( - \frac{1} 4 |\log \eps|^2 \right) 
 \|g_0\|_{L^\infty} \, \|h\|_{L^\infty} \, \mu_\eps \, .
\end{equation}
By assumptions (\ref{existsa}) and~(\ref{H2}),  there is~$a>0 $ such that
$$R= {\tau \over \delta} \ll   \eps^{-a} $$
and for~$\eps$ small enough there holds
$$K= {\theta \over \tau} \leq \frac 12\log|\log\eps|\,.$$
Then taking the logarithm of the right-hand side in (\ref{v-estimate-1}), we find that it is smaller than
$$  (a+d-1) |\log \eps | + |\log\e|^{\log2/2} \log (  C \theta) - \frac{1} 4 |\log \eps|^2 \rightarrow - \infty \hbox{ as } \eps \to 0\,.$$   The proposition is proved. \qed
%{\color{red} ancienne version
%
%
% Similarly to the computations leading to Proposition~\ref{Proposition - estimates on g0} we write 
%
%$$
%  \begin{aligned}
% \int dz_1 G^{\eps,{\rm clust}}_1(\theta,z_1) h(z_1) & =  \widetilde \bbE_\eps \Big( h({\bf z}^\eps_1(t))\sum_{i=1}^\cN g_0({\bf z}^\eps_1(0)\Big)\\
% & = \sum_{n\geq 0} \frac{\mu_\eps^n}{n!} \int_{\cD^\eps_n} d   Z_n\indc_{Z_n \in \Upsilon^\eps_{n}} M^{\otimes n}(V_n)
% h(z_1)
% \sum_{i=1}^n g_0(z_i)
% \\
% & \leq  \frac1{\cZ^\eps}\|g_0\|_{L^\infty} \|h\|_{L^\infty} \mu_\eps \sum_{p\geq 0} \frac{\mu_\eps^p}{p!} \int dz_1 M(v_1)\int_{\cD^\eps_p} d \bar Z_p\indc_{(z_1,\bar Z_p) \in \Upsilon^\eps_{p+1}} \, .\\
% \end{aligned}
%$$
%It remains to choose the time step at which there is a cluster of size~$\gamma$ (which produces an error of size~$\theta/\delta$) and then the~$\gamma$ particles involved (this produces a factor~$\mu_\eps^\gamma$), and then exploit the (static) constraint that  the particles are close by~$\delta |\log \eps|$. We obtain 
%$$
% \int dz_1 G^{\eps,{\rm clust}}_1(\theta,z_1) h(z_1) \leq  \frac {C\theta} \delta  \|g_0\|_{L^\infty}  \|h\|_{L^\infty} \mu_\eps^{1+\gamma} (\delta |\log \eps|)^{d(\gamma -1)}\,.
% $$
%The result follows. }

%%%%%%%%%%%%%%%%%%%%%%%%%%%%%%%%%%%%%%%%%%%%%%%%

\subsection{Convergence of the principal part}
\label{section - main part}

In this section, we prove Proposition \ref{proposition - PP}. This is based on classical arguments relying on $L^\infty$ estimates. We shall refer to the literature for details.

Following \cite{BLLS80}, the solution of the linearized Boltzmann equation \eqref{eq:LBE} can be rewritten as a Duhamel iterated formula (to conveniently compare it with \eqref{eq:G1epsmain}). 
To do so we introduce Boltzmann pseudo-trajectories~$\Psi_{1,m}$ on $(0,\theta)$, constructed as follows. 
For all $z_1$, all parameters~$(t_{i}, \omega_{i} , v_{n+i} )_{i= 1,\dots, m}$ with~$t_i>t_{i+1}$ and all collision trees $a \in \cA^\pm_{1,   m}$
(denoting by $Z_{m+1}(\tau )$ the coordinates of  the particles at time~$\tau \leq t_{m}$)
\begin{itemize}
\item start from $z_1$ at time $t $ and, by iteration on $i= 1,2,\dots, m$:
\item transport all existing particles backward on $(t_{i}, t_{i-1})$  (on $\D^{i}$),
\item add a new particle labeled $i+1$  at time $t_{i}$, at position $x _{a_{i}} (t_{i})$ and with velocity~$v_{1 +i}$,
\item   apply the scattering rule (\ref{scattlaw}) if $s_{i}>0$.
\end{itemize}
%We discard non admissible parameters for which this procedure is ill-defined; in particular we exclude values of $\omega_{i}$ corresponding to an overlap of particles (two spheres at distance strictly smaller than $\e$).
%In the following we denote by~$\cG^\eps_m(a, Z_n )$ the set of admissible parameters.
%
%
We then define the formal limit of \eqref{eq:seriesexp}
\begin{equation}
\label{eq: marginal 1 boltzmann hierarchy}
G _1 (\theta ):= \sum _{m \geq 0} Q_{1, m+1} (\theta)  G^{0}_{m+1}\,,\qquad n \geq 1
\end{equation}
where $Q_{1, m+1}$ is the Boltzmann's hierarchy operator
\begin{eqnarray*}
&&Q_{1, m+1}(\theta)G^{0}_{m+1}:=\sum_{a \in \cA^\pm_{1,   m} }\int    dT_{m}  d\Omega_{m}  dV_{2, 1+ m}
%&&\qquad\qquad \times 
\prod_{i=  1}^{  m}  s_{i}\Big(\big( v_{1+i} -v_{  a_{i}} (t_{i}^+)\big) \cdot \omega_{i} \Big)_+
G_{m+1}^{0} \big (\Psi^{0}_{m+1}\big)
\end{eqnarray*}
and the initial data are given by 
\begin{equation} 
\label{eq:Gn0}
G^0_n(Z_n) := M^{\otimes n}(V_{n})\sum_{i = 1}^{n} g_0(z_i)\;,\qquad n \geq 1\;.
\end{equation}
From Property 3 in \cite{BLLS80}  (see also Section 1.1.3 in \cite{BGSR2}), 
$G_1(\theta)$ is equal to  the solution $Mg(\theta)$ of the  linearized Boltzmann equation  \eqref{eq:LBE}. 
Furthermore,  the $n$-particle correlation function~$G_n(t,Z_n)$  can  also be represented by 
Duhamel series as in \eqref{eq: marginal 1 boltzmann hierarchy} and it is  given by the following explicit expression for any $n \geq 1$ (see  \cite{BLLS80})
\begin{equation} 
\label{eq:Gnt}
\forall t \geq 0, \qquad 
G_n(t, Z_n) := M^{\otimes n}(V_{n})\sum_{i = 1}^{n} g(t,z_i)  \;.
\end{equation}
Following  the decomposition \eqref{main-decomposition} of $G^\eps _1 (\theta )$, we  write $G_1(\theta)$ as 
$$
G_1(\theta) = G_1^{{\rm main}}(\theta) + G^{{\rm exp}} _1(\theta) \, , 
$$
where the main part is given by 
$$
G_1^{{\rm main}}(\theta) := \sum _{( n_k \leq 2^k)_{k\leq K} }
Q _{1, n_1} (\tau) \dots Q_{N_{K-1}, N_K} (\tau) G_{N_K}^0\, ,
$$
and the superexponential part by 
$$
 G^{{\rm exp}} _1(\theta) :=
\sum_{k=1}^K \sum _{( n_j \leq 2^j)_{j\leq k-1} }
 \sum _{  n_k > 2^k } Q _{1, n_1} (\tau) \dots Q_{N_{K-1}, N_K} (\tau) G_{N_K}(\theta - k \tau)\, .
$$
This remainder term is much easier to control than the corresponding one of the particle system as 
there is no recollision in the limiting system and the correlation functions $G_{N_K}$
are explicit~\eqref{eq:Gnt}. Since the solution $g(t)$ of the  linearized Boltzmann equation  \eqref{eq:LBE} remains controlled in $L^\infty$-norm,
the correlation functions $G_{N_K}$ are also controlled in  $L^\infty$-norm at any time. 
Using $L^\infty$ estimates as in \cite{BGSR2}, the remainder $G^{{\rm exp}} _1$ can be neglected.

Recalling the principal part 
$$
  G^{\eps,{\rm main}} _1 (\theta )=  \sum _{( n_k \leq 2^k)_{k\leq K} } Q^{\eps0} _{1, n_1} (\tau) \dots Q^{\eps0}_{N_{K-1}, N_K} (\tau) \,  G^{\eps0}_{N_K}  \, ,
$$
we notice that the differences in this formula with respect to $G_1^{{\rm main}}(\theta)$ are due to:
\begin{itemize}
\item[1)] the initial data $G^{\eps0}_{N_K}$ vs.\,$G^{0}_{N_K}$\;;
\item[2)] the fact that pseudo-trajectories~$\Psi^\e_{1,m}$ are constrained to the set of parameters avoiding recollisions, and also to the set $\cG_{  m}^{\e}(a, Z_1 )$\,;
\item[3)] the fact that (at creations) particles in~$\Psi^\e_{1,m}$ collide at distance $\e$ while in~$\Psi^\e_{1,m}$ they collide at distance $0$.
\end{itemize}
These errors are controlled as in \cite{La75}.

For the initial data we apply the following classical lemma: we refer to~\cite{BLLS80, GSRT, PSS17} for a proof.
\begin{Lem}
\label{lem:ID}
There exists a positive constant $C$ such that, for any $n \in \N$,
$$
\left|\left(G^{\e 0}_{n} - G^0_n\right)\left(Z_n\right) \indc_{{\mathcal D}^\eps_{n}}\left(X_n\right)\right| \leq C^n M^{\otimes n}\left(V_n\right) \e \|g_0\|_{\infty}\,,
$$
when $\e$ is small enough.
\end{Lem}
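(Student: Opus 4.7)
The plan is to rewrite $G^{\eps 0}_n$ in terms of the position-only equilibrium correlation functions
$$
\widetilde\rho^\eps_s(X_s):=\frac{1}{\cZ^\eps}\sum_{p\geq 0}\frac{\mu_\eps^p}{p!}\int\indc_{\cD^\eps_{s+p}}(X_s,\bar X_p)\,M^{\otimes p}(\bar V_p)\,d\bar Z_p,
$$
and then to control the two resulting factors by the Penrose cluster argument of Section~\ref{section - L2}, exploiting the mean-free condition on $g_0$ to which one may reduce as in Section~\ref{sec:strategy}.

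Splitting the biased sum $\sum_{i=1}^{n+p}g_0(z_i)$ appearing in \eqref{biased initial data} according to whether the tagged particle lies among the first~$n$ variables or the $p$ integrated ones, and applying exchangeability, one obtains the identity
$$
G^{\eps 0}_n(Z_n) = M^{\otimes n}(V_n)\Bigl[\widetilde\rho^\eps_n(X_n)\sum_{i=1}^n g_0(z_i) + \mu_\eps\int g_0(\bar z)\,M(\bar v)\,\widetilde\rho^\eps_{n+1}(X_n,\bar x)\,d\bar z\Bigr].
$$
Since $G^0_n(Z_n) = M^{\otimes n}(V_n)\sum_{i=1}^n g_0(z_i)$, it thus suffices to establish the two uniform estimates
$$
\bigl|\widetilde\rho^\eps_n(X_n)-1\bigr|\,\indc_{\cD^\eps_n}(X_n)\leq C^n\eps,\qquad \mu_\eps\left|\int g_0(\bar z)M(\bar v)\widetilde\rho^\eps_{n+1}(X_n,\bar x)\,d\bar z\right|\indc_{\cD^\eps_n}(X_n)\leq C^n\eps\,\|g_0\|_\infty.
$$

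The first bound is the standard cluster-expansion estimate for hard-sphere correlation functions in the Boltzmann-Grad regime, and I would reproduce it by the very same argument used in the proof of Proposition~\ref{prop: Quasi-orthogonality estimates}. Expanding the cross-exclusion $\prod_{i\leq n,j\leq p}\indc_{|x_i-\bar x_j|>\eps}$ in Mayer bonds, the Penrose tree inequality \eqref{eq:treeineq} reduces the sum to trees joining the $X_n$ block to a nonempty subset of background particles; each edge adjacent to $X_n$ contributes a smallness factor $\mu_\eps\eps^d=\eps$, and the partition function $\cZ^\eps$ in the denominator cancels against the ``disconnected'' contribution (with no bond to $X_n$), leaving a convergent geometric series bounded by $C^n\eps$.

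For the second bound, the mean-free condition $\int g_0\,M\,dz=0$ plays a decisive role. In the analogous Mayer expansion of $\widetilde\rho^\eps_{n+1}(X_n,\bar x)$, the \emph{fully disconnected} term -- in which the tagged particle $\bar z$ carries no Mayer bond either to $X_n$ or to the integrated background -- factorises as $\mu_\eps\int g_0 M\,d\bar z$ times the partition function and therefore vanishes. Every remaining diagram involves at least one Mayer bond incident to $\bar z$, which supplies a factor $\mu_\eps\eps^d=\eps$ upon integration against $g_0\,M\,d\bar z$; summing the remaining tree structure by Penrose's inequality then yields the announced bound $C^n\eps\|g_0\|_\infty$. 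The main obstacle is to retain the uniform prefactor $C^n$: naive enumeration of trees and vertex degrees would produce factorial losses, and one must invoke the identity \eqref{eq: tree number} together with the Gaussian absorption argument used at the end of the proof of Proposition~\ref{prop: Quasi-orthogonality estimates} in order to convert these combinatorial sums into the required exponential prefactor.
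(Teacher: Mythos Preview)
The paper does not give its own proof of this lemma; it simply refers to \cite{BLLS80, GSRT, PSS17}. Your proposal is precisely the standard cluster-expansion argument found in those references, so in substance you are on the right track and there is nothing essentially different to compare.

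Two small inaccuracies in your sketch are worth correcting. First, in the second bound, the ``fully disconnected'' contribution (no Mayer bond incident to $\bar x$) does not factorise as $\mu_\eps\int g_0 M\,d\bar z$ times the partition function; it factorises as $\widetilde\rho^\eps_n(X_n)$ (independent of $\bar x$), and it is the integral $\mu_\eps\,\widetilde\rho^\eps_n(X_n)\int g_0 M\,d\bar z$ that vanishes by the mean-free assumption. The conclusion is the same, but the mechanism is slightly different from what you wrote. Second, the reference to the ``Gaussian absorption argument'' at the end is out of place: the correlation functions $\widetilde\rho^\eps_s$ depend only on positions, so there are no velocity integrals to absorb. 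The control of the $C^n$ prefactor comes purely from the tree-counting identity \eqref{eq: tree number} and the smallness $\mu_\eps\eps^d=\eps$, exactly as in \eqref{est-2N}; no Gaussian estimate is needed here.
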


Now, let $\Psi^E_{1,m}$ be an auxiliary pseudo-trajectory defined exactly as~$\Psi_{1,m}$, with the only difference that particle $i+1$  is created at position $x _{a_{i}} (t_{i}) +\eps s_{i}\omega_{i}$ (sometimes called Boltzmann-Enskog pseudo-trajectory). 
Correspondingly, we can define $Q^{E}_{1, m+1}$ exactly as $Q_{1,m+1}$, with $\Psi_{1,m}$ replaced by $\Psi^E_{1,m}$. By definition, 
$\Psi^{E}_{1,m}$ and $\Psi_{1,m}$ have identical velocities and the positions cannot differ more than $m \e$. In particular at time zero we have that the euclidean norm of the difference~$|\Psi^{E 0}_{N_k} - \Psi^{0}_{N_k}|$ is bounded by
\begin{equation} 
\label{eq:flowsconv}
|\Psi^{E 0}_{N_k} - \Psi^{0}_{N_k}| \leq N_k^\frac32\, \e\;.
\end{equation}

Next, we can simplify the integral in $G^{\eps,{\rm main}} _1$ by removing the constraint in point 2) above.
Let ${\mathcal O}^{\e}$ be the complement of the set of parameters in 2). 
Clearly the pseudo-particles in $\Psi^E_{1,m}$ can overlap (they can reach distance strictly smaller than $\e$).
However in absence of recollisions and overlaps, the auxiliary pseudo-trajectory coincides with the BBGKY pseudo-trajectory~$\Psi^E_{1,m} = \Psi^\e_{1,m}$.  We can therefore replace $\Psi^\e_{1,m}$ by $\Psi^E_{1,m}$ in the geometric representation for $G^{\eps,{\rm main}} _1$.
The contribution of ${\mathcal O}^{\e}$ to $Q^{E}_{1, 1+m}$  is bounded by a quantitative version of Lanford's argument. 
For instance by applying Eq. (D.3) in \cite{PS17}, combined with Eq. (C.7)  in \cite{PS17} to control the cross sections, 
one can show that there is a constant $\alpha \in (0,1)$ such that
\begin{eqnarray*}
&&\Big|\sum_{a \in \cA^\pm_{1,   m} }\int_{{\mathcal O}^{\e}}  dz\,  dT_{m}  d\Omega_{m}  dV_{ 2,1+  m}\,
h(z)\,\\
%&&\qquad\qquad \times 
&& \qquad\times\prod_{i=  1}^{  m}  s_{i}\Big(\big( v_{1+i} -v^E_{  a_{i}} (t_{i}^+)\big) \cdot \omega_{i} \Big)_+
G_{1+m}^{0} \big (\Psi^{E0}_{1+m}\big)\Big|
\leq \|h\|_{L^\infty(\D)}\|g_0\|_{L^\infty(\D)} \eps^\alpha \left( C \theta \right)^{m}\;.
\end{eqnarray*}

Using this after Lemma \ref{lem:ID}, and controlling the error \eqref{eq:flowsconv} thanks to the Lipschitz norm of~$g_0$, we conclude that  
\begin{equation}
\left| \int\left( G^{\eps,{\rm main}} _1 (\theta )-G^{{\rm main}}_1 (\theta )\right)\, h(z)\, dz\right| 
\leq \|h\|_{L^\infty(\D)}\left(\e^\alpha \|g_0\|_{L^\infty(\D)}+ \e \|\nabla_x g_0\|_{L^\infty_M} \right) \sum _{\n_k} \left(C\theta\right)^{N_K+1}
 \end{equation}
which leads to Proposition \ref{proposition - PP}. \qed

%%%%%%%%%%%%%%%%%%%%%%%%%%%%%%%%%%%%%%%%%%%%%%%%
%%%%%%%%%%%%%%%%%%%%%%%%%%%%%%%%%%%%%%%%%%%%%%%%

%%%%%%%%%%%%%%%%%%%%%%%%%%%%%%%%%%%%%%%%%%%%%%%%
%%%%%%%%%%%%%%%%%%%%%%%%%%%%%%%%%%%%%%%%%%%%%%%%

%%%%%%%%%%%%%%%%%%%%%%%%%%%%%%%%%%%%%%%%%%%%%%%%
%%%%%%%%%%%%%%%%%%%%%%%%%%%%%%%%%%%%%%%%%%%%%%%%

%%%%%%%%%%%%%%%%%%%%%%%%%%%%%%%%%%%%%%%%%%%%%%%%
%%%%%%%%%%%%%%%%%%%%%%%%%%%%%%%%%%%%%%%%%%%%%%%%
%%%%%%%%%%%%%%%%%%%%%%%%%%%%%%%%%%%%%%%%%%%%%%%%
\appendix

\section{$L^p$ a priori estimates}
\label{appendix-Lp}

For the sake of completeness, we state below some estimates on the  fluctuation field under the equilibrium measure. These bounds follow from a standard cluster expansion approach (see e.g. \cite{Ueltschi}).
\begin{Prop}
\label{Proposition - estimates on g0}
Let~$g_0$ be a function in $L^\infty$. 
Then for all~$1 \leq p < \infty$ and for~$\eps$ small enough, the moments of the fluctuation field are bounded: 
\begin{equation}
\label{eq: moment ordre 2,4}
 \bbE_\eps \Big( \big(\zeta^\eps_0 (g_0)\big) ^p \Big) \leq C_p \,,
% \|g_0\|_{L^p_M}^p  \, .
\end{equation}
where the constant $C_p$ depends on $\|g_0\|_{L^\infty}$. 
%\begin{equation}
%\label{eq: moment ordre 2,4}
% \bbE_\eps \Big( \big(\zeta^\eps_0 (g_0)\big) ^2 \Big) \leq C\|M g^2_0\|_{L^1}\, , \qquad 
%  \bbE_\eps \Big(\big (\zeta^\eps_0 (g_0)\big)^4 \Big) \leq C\|M g_0^4\|_{L^1} \, .
%\end{equation}
\end{Prop}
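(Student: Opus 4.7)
The plan is to reduce the bound on $\bbE_\eps[(\zeta^\eps_0(g_0))^p]$ to a Poisson-type control on the cumulants of the linear statistic $X^\eps := \sum_{i=1}^\cN g_0({\bf z}^\eps_i)$, so that $\zeta^\eps_0(g_0) = \mu_\eps^{-1/2}(X^\eps - \bbE_\eps X^\eps)$. The classical moment-cumulant formula rewrites the $p$-th centered moment as a finite sum
$$\bbE_\eps\bigl[(X^\eps - \bbE_\eps X^\eps)^p\bigr] = \sum_{\pi \in \cP^\star_p} \prod_{B \in \pi}\kappa_{|B|}(X^\eps) \, ,$$
where $\cP^\star_p$ is the set of partitions of $\{1,\dots,p\}$ with no singleton block, and $\kappa_k$ denotes the $k$-th cumulant. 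Since all partitions in $\cP^\star_p$ have blocks of cardinality at least two, only cumulants of order $k \geq 2$ appear.

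The main step is then the uniform bound
$$|\kappa_k(X^\eps)| \leq C_k(\|g_0\|_{L^\infty})\,\mu_\eps \, , \qquad k \geq 2 \, .$$
Each such cumulant can be written as an integral of $g_0^{\otimes k}$ against a truncated (Ursell-type) $k$-point correlation function $\rho^{T,\eps}_k$ of the grand canonical Gibbs measure. One then proceeds exactly as in the derivation of~(\ref{eq: lem1 moyenne}) in Section~\ref{section - L2}: the partition function ratio is absorbed by the integration over background particles, and the Penrose tree inequality~(\ref{eq:treeineq}) reduces the remaining integral to a finite sum over minimally connected graphs on $k$ labelled vertices whose $k-1$ edges each carry an exclusion indicator $\indc_{|x-x'|\leq\eps}$. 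Integrating out $k-1$ position differences in $\T^d$, using translation invariance for the remaining one, and bounding the Gaussian weights in velocity, each tree contributes at most $\mu_\eps^k\,\eps^{d(k-1)}\,\|g_0\|_{L^\infty}^k = \mu_\eps\,(\mu_\eps\,\eps^d)^{k-1}\,\|g_0\|_{L^\infty}^k = \mu_\eps\,\eps^{k-1}\,\|g_0\|_{L^\infty}^k$, which gives the claimed bound after summing over the finitely many tree shapes on $k$ vertices.

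Finally, each $\pi \in \cP^\star_p$ has at most $\lfloor p/2 \rfloor$ blocks, so combining the cumulant bound above with the finiteness of $|\cP^\star_p|$ yields
$$\bigl|\bbE_\eps[(X^\eps-\bbE_\eps X^\eps)^p]\bigr| \leq C'_p\,\mu_\eps^{\lfloor p/2\rfloor} \, ,$$
and dividing by $\mu_\eps^{p/2}$ yields~(\ref{eq: moment ordre 2,4}). The only substantive obstacle is the cumulant bound, which is a direct generalisation of the second-moment computation of Proposition~\ref{prop: Quasi-orthogonality estimates}: its combinatorial content is that truncated correlation functions involve only connected clusters of $k$ vertices, producing $k-1$ small factors $\eps^d$ that, together with the Boltzmann-Grad scaling $\mu_\eps\,\eps^d = \eps$, absorb all but one power of $\mu_\eps$.
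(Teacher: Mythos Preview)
Your argument is correct and rests on the same core tool as the paper's proof---the cluster expansion of the grand canonical partition function combined with the Penrose tree inequality---but you organize it differently. The paper works with the cumulant generating function: it shows that $\lambda\mapsto\log\bbE_\eps[\exp(\lambda\,\zeta^\eps_0(g_0))]$ admits a convergent Mayer expansion whose $n$-th term is bounded by $C^n(\mu_\eps\eps^d)^{n-1}\lambda^2$, hence is analytic in a neighbourhood of~$0$ uniformly in~$\eps$; bounded moments then follow by differentiation. You instead bound each cumulant $\kappa_k(X^\eps)$ directly and feed these bounds into the moment--cumulant formula. The two are equivalent, since $\kappa_k(\zeta^\eps_0(g_0))=\mu_\eps^{-k/2}\kappa_k(X^\eps)$ and analyticity of the log-Laplace transform is precisely the statement that the cumulants of $\zeta^\eps_0(g_0)$ are uniformly bounded. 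The paper's route gives a little more (uniform exponential moments), while yours is more elementary for the polynomial-moment statement actually asked.

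Two small imprecisions in your sketch, neither fatal: (i) the identity $\kappa_k(X^\eps)=\int g_0^{\otimes k}\rho^{T,\eps}_k$ holds for the \emph{factorial} cumulants; the ordinary cumulants also pick up diagonal contributions where several copies of $g_0$ sit at the same point, but these involve $\rho^{T,\eps}_j$ with $j<k$ and satisfy the same $O(\mu_\eps)$ bound. (ii) The truncated correlation $\rho^{T,\eps}_k$ is not just a sum over connected graphs on $k$ vertices: its Mayer expansion is a sum over $m\geq 0$ background particles with Ursell function $\varphi(Z_k,\bar Z_m)$ on $k+m$ vertices. The tree inequality and the scaling $\mu_\eps\eps^d=\eps$ still make this series absolutely convergent and $O(\mu_\eps)$, exactly as in the paper's computation of~(\ref{eq: lem1 moyenne}), so your conclusion stands.
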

\begin{proof}
%To show that the expectation \eqref{eq: mean g0} is equal to 0 uniformly in $\eps$, it is enough to 
%notice that $\bbE_\eps \left( g_0 \right)$ is proportional to $\displaystyle \int M  g_0 dz = 0$ by using the translation invariance of the first marginal of the equilibrium measure.

The upper bounds on the moments will be recovered by taking the derivatives at $\lambda= 0$ of the following modified partition function
\begin{align*}
\Psi_\eps (\lambda) 
& := \frac{1}{\mu_\eps} \log  \bbE_\eps \Big( \exp  \big( \lambda \mu_\eps \pi^\eps_0 (g_0)  \big) \Big) +
\frac{1}{\mu_\eps} \log \cZ^ \eps\\
& = 
\frac{1}{\mu_\eps} \log 
\left( 1 + \sum_{N\geq 1}\frac{\mu_\eps^N}{N!}  
\int_{\T^{dN}\times \R^{dN}} 
 \Big( \prod_{i\neq j}\indc_{ |x_i - x_j| > \eps} \Big)
 \left( \prod_{i=1}^N M(v_i)
 \exp ( \lambda g_0(z_i) ) \right)dZ_N \right).
\end{align*}
%In order to apply the cluster expansion result of \cite{Ueltschi}, we check that condition (3) of \cite{Ueltschi} (with $a =1$) is satisfied uniformly in $\lambda$ in a neighborhood of the origin 
%$$
%\sup_y \Big\{ \mu_\eps  \int dx dv   1_{|x -y | \leq \eps} M(v) \exp ( \lambda g_0(x,v) ) \Big\} \leq 
%\mu_\eps  \eps^d  \exp \big( \lambda  \| g_0 \|_\infty \big) \leq \exp( -1),
%$$
%as soon as $\eps$ is small enough.
%As a consequence, Theorem 1 of \cite{Ueltschi} ensures  that  the modified partition function 

As $\| g_0 \|_\infty < \infty$, Equation (26) from  \cite{Ueltschi} applies as soon as $\eps$ is small enough. Thus the modified partition function
can be expanded as a uniformly converging series for any $\lambda$ in a neighborhood of the origin
\begin{align}
\label{eq: fonction Psi lambda}
\Psi_\eps(\lambda)  = \sum_{n \geq 1}\frac{ \mu_\eps^{n-1} }{n!}
 \int_{\T^{dn}\times \R^{dn}} dZ_n \left( \prod_{i=1}^n M(v_i)
 \exp ( \lambda g_0(z_i) ) \right) \varphi (X_n)\, ,
\end{align}
denoting by~$\varphi$ the cumulants defined by 
\begin{align}
\label{eq: cumulant phi}
\varphi(X_n) =   \sum_{G \in \cC_n} \prod_{i,j \in G} (- 1_{|x_i -x_j| \leq \eps} )\, .
\end{align}

The Laplace transform of the fluctuation field can be related to the  modified partition function as follows
\begin{align}
\label{eq: fonction Psi lambda mu}
\log &\,   \bbE_\eps  \Big( \exp  \big( \lambda  \zeta^\eps_0 (g_0)  \big) \Big)
 =
\log  \bbE_\eps \Big( \exp  \big( \lambda \sqrt{\mu_\eps} \pi^\eps_0 (g_0)  \big) \Big) 
- \lambda \sqrt{\mu_\eps}  \;  \bbE_\eps \Big(    \pi^\eps_0 (g_0)   \Big)  \\
& = \mu_\eps \Psi_\eps \left(\frac{\lambda}{\sqrt{\mu_\eps} } \right)  
- \lambda \sqrt{\mu_\eps}  \; \bbE_\eps \Big(  \pi^\eps_0 (g_0)    \Big) 
- \log \cZ^ \eps   \nonumber \\
& = \sum_{n \geq 1}\frac{ \mu_\eps^n}{n!}
 \int_{\T^{dn}\times \R^{dn}} dZ_n \prod_{i=1}^n M(v_i)
\left( \prod_{i=1}^n  \exp \left( \frac{\lambda}{\sqrt{\mu_\eps} } g_0(z_i) \right) -  \sum_{i=1}^n \frac{\lambda}{\sqrt{\mu_\eps} }   g_0(z_i) -1 \right) 
\varphi (X_n),  \nonumber 
\end{align}
where we used that the decomposition \eqref{eq: fonction Psi lambda} applies as well to 
$$\log \cZ^ \eps = \mu_\eps \Psi_\eps (0)
\quad \text{and} \quad \bbE_\eps \Big(    \pi^\eps_0 (g_0)  \Big) = \partial_\lambda \Psi_\eps (0).$$
We are left to check that, uniformly   in $\eps$ small enough, $\lambda \mapsto \bbE_\eps  \Big( \exp  \big( \lambda  \zeta^\eps_0 (g_0)  \big) \Big)$ is an analytic function  in a neighborhood of 0. The estimate \eqref{eq: moment ordre 2,4} on the moments will then follow by taking derivatives with respect to $\lambda$. 
To derive the analyticity, each term of the series \eqref{eq: fonction Psi lambda mu} can be bounded as follows by a second order expansion of the exponential product
\begin{align*}
&\frac{ \mu_\eps^n}{n!} \int_{\T^{dn}\times \R^{dn}} dZ_n \prod_{i=1}^n M(v_i)
\left( \prod_{i=1}^n  \exp \left( \frac{\lambda}{\sqrt{\mu_\eps} } g_0(z_i) \right) -  \sum_{i=1}^n \frac{\lambda}{\sqrt{\mu_\eps} }   g_0(z_i) -1 \right) 
\varphi (X_n)\\ 
& \qquad \qquad \qquad  
\leq C^n \frac{\mu_\eps^{n-1}}{n!} \lambda^2 \int_{\T^{dn} } dX_n
\varphi (X_n)
\leq C^n  ( \mu_\eps \, \eps^d)^{n-1} \lambda^2,
\end{align*}
where the constant $C$ depends on $\| g_0 \|_\infty$ and is uniform in $\eps, \lambda$ small enough. To derive the last inequality, we used  that $\varphi$ in \eqref{eq: cumulant phi} satisfies the tree inequality \eqref{eq:treeineq} and that the total number of trees is  $n^{n-2}$ by Cayley's formula.
This shows that the series is absolutely convergent and completes the claim on its analyticity.
\end{proof}

\section{Geometric estimates}\label{geometric estimates}

In this section we prove Propositions~\ref{recollisionprop} and~\ref{vsingularityprop}.

\medskip

\noindent
{\it Proof of Proposition~{\rm\ref{recollisionprop}}.} $ $ 

\smallskip

\noindent
{\bf The case of a periodic non clustering collision.}

Denote by $q,q'$ the particles involved in the first non clustering collision, assumed here to be periodic. By definition, their first deflection (going backward in time from $\tau_{rec} $ to $t_{stop}$) involves both particles $q$ and $q'$. 

If the first deflection corresponds to the $j$-th clustering collision,  $\{q,q'\} = \{q_j, \bar q_j \}$, and  in addition to the condition 
$ \hat x_j\in B_{T_\prec, j}$ which encodes the clustering collision, we obtain the condition
\begin{equation}
\label{self-recollc}
\begin{aligned}
\eps \omega_{j}  + (v_q  -  v_{ q'})  (\tau_{\rm rec} - \tau_{j} ) = \eps \omega_{\rm rec} +\zeta \,  \hbox{ with } \,    \zeta\in \Z^d\,, \, \omega_{\rm rec} \in \bS^{d-1}\,,  \\
\hbox{ and } v_q - v_{ q'} = \bar v_q - \bar v_{q'} - 2(\bar v_q - \bar v_{q'}) \cdot \omega_{j}   \, \omega _{j}
\end{aligned}
\end{equation}
denoting by $\bar v_q, \bar v_{q'}$ the velocities before the clustering collision in the forward dynamics, and by~$\omega_{j}$ the impact parameter at the clustering collision.
We deduce from the first relation that~$v_q- v_{ q'}$ has to be in a small cone $K_\zeta$ of opening $\eps$, 
which implies by the second relation  that  $\omega_{j} $
has to be in a small cone $S_\zeta$ of opening $\eps$. 

 Using the  local  change of variables $\hat x_j \mapsto (\eps \omega_j,  \tau_j) $, it follows that
$$
\begin{aligned}
\int \indc _{\mbox{\tiny Periodic non clustering collision with parent $j$}} \indc_{B_{T_\prec, j}}  \, d \hat x_j & \leq C
 \eps^{d-1} \theta \sum_{\zeta}
\int \indc _{\omega_{j} \in  S_\zeta} \big( (
 \bar v_q - \bar v_{q'}) \cdot \omega_{j}
 \big)_+
d\omega_{j}
   \\&\leq C \eps^{2( d-1) } \left(\theta \bbV\right)^{d+1}  
\end{aligned}
$$
since there are at most~$(\theta \bbV)^{d}  $
 possibilities for the~$\zeta$'s.

\medskip
\noindent
 {\bf Non clustering collision.}

 Denote by $q,q'$ the particles involved in the first non clustering collision, assumed here to be non periodic.
Assume that the first deflection in the backward dynamics of $(q,q')$ is  the~$j$-th clustering collision   between $ q_j=q$ and $\bar q_j = c$ (with~$c \neq q'$) at time $\tau_j$ (which implies necessarily that  $j\geq 2$). Then  in addition to the condition
$ \hat x_j    \in B_{T_\prec, j}$ which encodes the clustering collision, we obtain the condition
\begin{equation}
\label{recolld}
\begin{aligned}
\big(x_q ( \tau_j  )  -  x_{q'} ( \tau_j ) \big)  + (v_q - \bar v_{ q'})  (\tau_{\rm rec} -  \tau_j) = \eps \omega_{\rm rec} +\zeta \, ,\\
\hbox{ and } v_q  = \bar v_q - (\bar v_q - \bar v_c) \cdot \omega_j  \,  \omega _j
\end{aligned}
\end{equation}
denoting by $\bar v_q, \bar v_c$ and $\bar v_{q'}$ the  velocities of $q$, $c$ and $q'$ at time $\tau_{j-1}^+$ (and therefore at time $\tau_j^-$).
Define 
$$\delta x :=  \frac1\eps ( x_{q '}( \tau_j  )  -  x_q ( \tau_j) +\zeta ) =: \delta x_\perp + (\bar v_{ q'} - \bar v_q ) \delta\tau_j\, ,
$$
where $\delta x_\perp$ is the component of $\delta x$ orthogonal to $(\bar v_{q'} - \bar v_q)$.
We also define the rescaled time  $$
 \delta \tau_{\rm rec}:= (\tau_{\rm rec} - \tau_j)/\eps\,.
$$
Note that, by definition 
$$ | (\bar v_{ q'} - \bar v_q ) \delta\tau_j| \leq |\delta x | \leq {C \over \eps}\,\cdotp$$

The first equation in (\ref{recolld}) restates
\begin{equation}
\label{cylinder}
v_q - \bar v_{ q'}  = {1\over \delta \tau_{\rm rec}} \Big( \omega_{\rm rec}  + \delta x_\perp + \delta \tau _j( \bar v_{ q'} - \bar v_q) \Big)\,.
\end{equation}

\medskip
\noindent
\underline {Case 1}~:  if $ | (\bar v_{ q'} - \bar v_q ) \delta\tau_j| \geq 2$, the triangular inequality implies
$$ {1\over 2 \delta \tau_{\rm rec}}  | (\bar v_{ q'} - \bar v_q ) \delta\tau_j| \leq | v_q - \bar v_{q'} |\,  ,$$
from which we deduce
$$  {1\over \delta \tau_{\rm rec}} \leq {2\bbV \over  | (\bar v_{ q'} - \bar v_q ) \delta\tau_j|}\,\cdotp$$ 
By (\ref{cylinder}), $v_q - \bar v_{ q'}$  belongs to a cylinder~$\cR $ of main axis~$\delta x_\perp +\bbR  (\bar v_q  -  \bar v_{ q'}) $ and 
 of width~${2\bbV \over  | (\bar v_{ q'} - \bar v_q ) \delta\tau_j|}.$

Then,  $v_q$ has to be both in the sphere of diameter~$[\bar v_q, \bar v_c]$ (by the second equation in (\ref{recolld})) and in the cylinder~$\bar v_{q'} + \cR$ (by (\ref{cylinder})). 
This imposes a strong constraint on the deflection angle $\omega_j$ in (\ref{recolld}), which has to belong to a union of at most two spherical caps.
 The maximal solid angle is obtained in the case when the cylinder is tangent to the sphere (see Figure \ref{fig:intersection}). It  is always less than $C_d\min (1, (\eta/R)^{(d-1)/2})$ denoting by $\eta$ the width of the cylinder, and by~$R$ the radius of the sphere.
 
 \begin{figure} [h] %  figure placement: here, top, bottom, or page
   \centering
  \includegraphics[width=11cm]{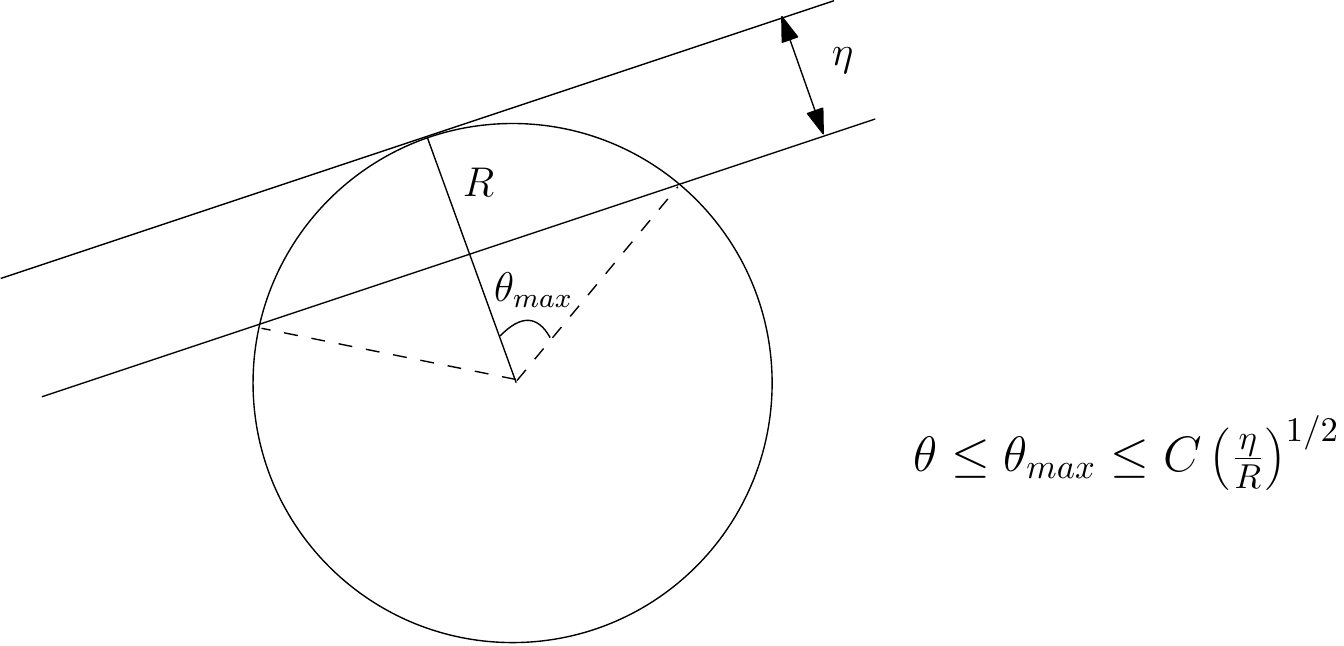} 
  \caption{Intersection of a cylinder and a sphere. The maximal solid angle is obtained in the case when the cylinder is tangent to the sphere. It  is always less than $C_d\min (1, (\eta/R)^{(d-1)/2})$.}
\label{fig:intersection}
\end{figure}

Thus $\omega_j   $ has to belong to a union of spherical caps $S_\zeta$, of solid angle  less than 
$$\int \indc _{\omega_j \in  S_\zeta} d\omega_j \leq  C \left( \frac{ \bbV }{|\delta \tau_j(\bar v_q -  \bar v_{ q'})| |\bar v_q - \bar v_c |}\right)^{(d-1)/2} \,.$$
Note that we can always replace the power $(d-1)/2$ by $1$ since we know that the left hand side is bounded by $|\bS^{d-1}|$.

\medskip
\noindent
\underline {Case 2}~: if $ | (\bar v_{ q'} - \bar v_q ) \delta\tau_j| < 2$, we have a strong constraint on $\tau_j$ and we do not need any additional constraint on $\omega_j$.

  Using the (local) change of variables $\hat x_j \mapsto (  \eps \delta\tau_j, \eps \omega_j  )$, it follows that
$$
\begin{aligned}\int \indc _{\mbox{\tiny Non clustering collision with parent $j$}} & \indc_{B_{T_\prec, j}} d \hat x_j  \\
& \leq {C\over \mu_\eps}  \sum_{\zeta}\int \indc_{ | (\bar v_{ q'} - \bar v_q ) \delta\tau_j| \geq 2}  \indc _{\omega_j \in  S_\zeta}
 |( \bar v_q - \bar v_c)\cdot \omega_j | d\omega_j  \, \eps d\delta\tau _j \\
 & \quad + {C\over \mu_\eps}  \sum_{\zeta}\int \indc_{ | (\bar v_{ q'} - \bar v_q ) \delta\tau_j| < 2} 
 |( \bar v_q - \bar v_c)\cdot \omega_j | d\omega_j  \, \eps d\delta\tau _j \\
 &    \leq {C\over \mu_\eps}  (\bbV  \theta)^d {\eps |\log \eps| \bbV \over  | \bar v_q - \bar v_{ q'}| } \; \cdotp
\end{aligned}
$$
This concludes the proof of Proposition~\ref{recollisionprop}.
 \qed

\begin{Rmk}
In dimension 2, the same strategy would lead to an estimate $O(\eps^{1/2})$, which is not strong enough. We would therefore have to be more careful when estimating the size of the intersection between the cylinder and the sphere, and track their relative positions by introducing an additional parent. For the sake of technical simplicity, we will not present a detailed proof here. \end{Rmk}

\bigskip

\noindent
{\it Proof of Proposition~{\rm\ref{vsingularityprop}}.} $ $ 

If the first deflection of $q$ corresponds to the $j$-th  clustering collision, in addition to the condition 
$ \hat x_j \in B_{T_\prec, j}$ which encodes the clustering collision, we obtain a condition on the velocity.

There are actually  two subcases~:
\begin{itemize}
\item $(q_j, \bar q_j) = (q,q')$ and $|v_q - v_{ q'} | = |\bar v_q - \bar v_{q'} |$ ; 
\item $(q_j, \bar q_j) = (q,c)$, $q' $ is not deflected at $\tau_j$,  and $ v_q = \bar v_q - (\bar v_q - \bar v_c) \cdot \omega_j \, \omega_j $ .
 \end{itemize}

\medskip
\noindent
\underline {Case 1}~: there holds
 \begin{equation*}
\int  {\indc_{B_{T_\prec, j}} \over | v_q - v_{q'} | }  d\hat x_j \leq {C\over \mu_\eps}  \int { \big |  (\bar v_q - \bar v_{q'}  ) \cdot  \omega_j  \big|\over |\bar v_q - \bar v_{q'} |}   d\omega_j  d\tau_j \leq  {C  \over \mu_\eps}( \delta \indc_{j=1} +\theta \indc_{j\neq 1})  \, \cdotp
\end{equation*}

\medskip
\noindent
\underline {Case 2}~: we have
 \begin{equation*}
\int {\indc_{B_{T_\prec, j}} \over | v_q - v_{q'} | } d\hat x_j \leq {C\over \mu_\eps}  \int {1 \over |\bar v_q - \bar v_{q'}- (\bar v_q - \bar v_c) \cdot \omega_j \, \omega_j  |} \big|(\bar v_q -  \bar v_c   ) \cdot  \omega_j  \big|  d\omega_j  d\tau_j \, .
\end{equation*}
Denoting $w := \bar v_q - \bar v_c$ and $u := \bar v_q - \bar v_{q'}$, we therefore have to study the integral
$$ \int {1\over |u- (w \cdot \omega) \, \omega |}   \big | w\cdot  \omega  \big| d\omega\,.$$
The denominator in the integrand vanishes when 
 $$
\omega _0= {u \over |u|} \, , \quad (u\cdot w) = |u|^2\,.$$
Consider an infinitesimal variation $\eta$ around $\omega_0 $. Since $\omega \in \bS^{d-1}$, $\eta$ is orthogonal to $\omega_0$. The first increment of  the denominator at $\omega _0$ is
$$ \left|  (w \cdot \eta) \omega_0+ (w\cdot \omega_0) \eta \right| \geq \left| (w\cdot \omega_0) \eta \right| \geq |u| |\eta|\,.$$
We therefore find that 
$$ { \big | w\cdot  \omega \big|\over |u- (w \cdot \omega )\, \omega |}  \leq  C{|u| \over |\eta|  |u|} \,\cdotp$$
Locally the measure $d\omega$ looks like $|\eta|^{d-2} d\eta$, from which we deduce that 
$$ \int {1 \over |u- (w \cdot \omega )\, \omega |}  \big | w\cdot  \omega \big| d\omega\leq  C \bbV $$
 since $d \geq 3$. 
Integrating with respect to $\tau_j$ (and for $j=1$ taking into account the constraint that $\tau_1 \in [t_{stop},  t_{stop} +\delta]$)  concludes the proof of Proposition~\ref{vsingularityprop}.
\qed

	\begin{Rmk}
 Proposition~{\rm\ref{vsingularityprop}}
can be easily extended to dimension 2, taking into account the logarithmic singularity~:
$$\int  \indc_{B_{T_\prec, j}}\min \left(1, {\eps \over | v_q - v_{q'} | } \right)  d\hat x_j \leq {C\bbV \eps |\log \eps| \over \mu_\eps}( \delta \indc_{j=1} +\theta \indc_{j\neq 1})\,.$$
\end{Rmk}

\end{document}